\documentclass[12pt]{amsart}

\usepackage{amssymb, amscd, txfonts}
\usepackage[all]{xy} 
\usepackage{graphicx}

\numberwithin{equation}{section}

\newtheorem{theorem}{Theorem}[section]
\newtheorem{proposition}[theorem]{Proposition}
\newtheorem{lemma}[theorem]{Lemma}
\newtheorem{corollary}[theorem]{Corollary}

\theoremstyle{definition}

\newtheorem{example}[theorem]{Example}

\theoremstyle{remark}
\newtheorem{remark}[theorem]{Remark}
\newtheorem{claim}[theorem]{Claim}

\newcommand{\Z}{\mathbb{Z}}
\newcommand{\Q}{\mathbb{Q}}

\newcommand{\C}{\mathbb{C}}

\newcommand{\proj}{{\mathbb P}}

\newcommand{\CH}{{\rm CH}}

\begin{document}

\title[]{Higher Chow cycles, cyclic cubic fourfolds and Lagrangian subvarieties}
\author[]{Shouhei Ma}
\author[]{Ken Sato}
\thanks{Supported by KAKENHI 21H00971} 
\address{Department~of~Mathematics, Tokyo~Institute~of~Technology, Tokyo 152-8551, Japan}
\email{ma.s.ae@m.titech.ac.jp}
\email{sato.k.da@m.titech.ac.jp}
\subjclass[2020]{14C25, 14J42, 14J70}
\keywords{} 

\begin{abstract}
In this paper we initiate the study of higher Chow cycles on holomorphic symplectic manifolds. 
Our concrete central result is construction of explicit indecomposable $(2, 1)$- and $(4, 1)$-cycles on 
the Fano varieties of lines on cyclic cubic fourfolds. 
This is the first explicit example of such cycles on holomorphic symplectic manifolds. 
The proof of indecomposability is done by degeneration to cuspidal cubic fourfolds. 
Along the way, we develop a method of inducing $(p, 1)$-cycles on Hilbert squares of $K3$ surfaces. 
Finally, we study restriction of $(2, 1)$-cycles to Lagrangian subvarieties, 
and observe the phenomenon that the restricted cycles are always decomposable in the examples in our hand. 
\end{abstract}

\maketitle


\section{Introduction}\label{sec: intro}

Higher Chow groups of a smooth projective variety $X$ 
were introduced by Bloch \cite{Bl} 
as a cycle-theoretic incarnation of the motivic cohomology of $X$. 
They are expected to encode deep information, but remain mysterious in most cases. 
It is even not easy to construct explicit nontrivial cycles. 
In the past few decades, various examples and techniques of construction 
have been developed for surfaces, especially for $K3$ surfaces 
(see, e.g., \cite{Co}, \cite{CDKL}, \cite{MS}, \cite{Sa} and the references therein). 
It has been recognized that 
this line of investigation is related to and is benefited from 
the classical geometry of $K3$ surfaces. 

On the other hand, we know fewer examples in higher dimension. 
The situation in dimension $2$ suggests that  
a rich pool could be provided by higher dimensional analogues of $K3$ surfaces, 
\textit{holomorphic symplectic manifolds}. 
A more philosophical reason is that holomorphic $2$-forms govern the geometry of holomorphic symplectic manifolds, 
while they also play a key role in the study of higher Chow cycles of certain type. 

The purpose of this paper is to initiate the study of higher Chow cycles on 
holomorphic symplectic manifolds.  
In order to make a focus, 
we will mainly study $(p, 1)$-cycles on holomorphic symplectic fourfolds $X$, 
that is, elements of ${\CH}^{p}(X, 1)$. 
They are also called \textit{$K_1$-cycles}, 
as these higher Chow groups are constituents of the $K$-group $K_1(X)$ rationally (\cite{Bl}). 
Our more specific goals are threefold: 
\begin{enumerate}
\item Develop a method of inducing $K_1$-cycles on the Hilbert square $S^{[2]}$ of a $K3$ surface $S$ 
from $(2, 1)$-cycles on $S$ (\S \ref{sec: K3 induce}). 
\item Construct explicit examples of $(2, 1)$- and $(4, 1)$-cycles on the Fano varieties of lines on certain cubic fourfolds 
by combining (1) with a degeneration method (\S \ref{sec: deformation}, \S \ref{sec: indecomposable}). 
\item Study a new phenomenon on restriction of $(2, 1)$-cycles to Lagrangian subvarieties (\S \ref{sec: Lagrangian}). 
\end{enumerate}
The reader would notice that 
our choice of materials follows the genesis of 
holomorphic symplectic geometry, now with $K_1$-cycles. 

Since various examples of $(2, 1)$-cycles on $K3$ surfaces have been known, 
we obtain examples of $K_1$-cycles on their Hilbert squares by the method of (1). 
However, Hilbert squares of $K3$ surfaces form only a codimension $1$ locus in the moduli space: 
they deform to holomorphic symplectic fourfolds which are no longer Hilbert squares of $K3$ surfaces, 
and it is this fact that has enriched the study of holomorphic symplectic manifolds. 
Therefore it is desirable to construct cycles on holomorphic symplectic fourfolds 
by deforming induced cycles on $K3^{[2]}$. 
This is the purpose of (2). 

Our cubic fourfolds in (2) are cyclic covers of ${\proj}^4$ branched over cubic threefolds, known as 
\textit{cyclic cubic fourfolds}. 
The Fano varieties of such cubic fourfolds and their degeneration 
were studied recently Boissi\`ere, Sarti, Camere and Heckel \cite{BCS}, \cite{BHS}. 
We take the equation of the cubic fourfolds explicitly as 
\begin{equation}\label{eqn: cubic4 intro}
Y_{F,t} \: \: : \: \: x_4^3 - F(x_0, \cdots, x_3) + x_5(x_0x_3-x_1x_2) + t x_0x_5^2 = 0, 
\end{equation}
where $t\ne 0 \in \mathbb{A}^1$ and 
$F$ is a cubic form in $x_0, \cdots, x_3$. 
We normalize $F$ so that the genus $4$ curve cut out by $F$ on the quadric surface $x_0x_3=x_1x_2$ in ${\proj}^3$ 
is tangent to the lines $x_0=x_1=0$ and $x_0=x_2=0$. 
The cubic fourfold 
$Y_{F,t}$ is the cyclic cover of ${\proj}^4$ (with coordinates $x_0, \cdots, x_3, x_5$) 
branched over the cubic threefold  
\begin{equation*}
W_{F,t} \: \: : \: \: F(x_0, \cdots, x_3) - x_5(x_0x_3-x_1x_2) - t x_0x_5^2 = 0. 
\end{equation*}
It turns out that a general cubic threefold 
can be transformed to this form (Lemma \ref{lem: general cubic3}). 
Hence, by \cite{BCS}, the Fano varieties $F(Y_{F,t})$ of $Y_{F,t}$ 
form a locally complete family of holomorphic symplectic fourfolds \textit{with} a non-symplectic automorphism of order $3$ 
(given by $x_4\mapsto \exp(2\pi i/3)x_4$). 

A key point in our construction is the presence of two cuspidal plane cubics defined by 
\begin{equation*}
C_i \: \: : \: \: (x_0=x_i=x_5=0) \cap (x_4^3=F(x_0, \cdots, x_3)), \quad i=1, 2. 
\end{equation*}
It turns out that $Y_{F,t}$ contains the cone $\hat{C}_{i}$ over $C_i$ with vertex $[0, \cdots, 0, 1]$. 
This defines an embedding $C_i\hookrightarrow F(Y_{F,t})$. 
We can construct a $(4, 1)$-cycle $\xi_{4}$ on $F(Y_{F,t})$ from this curve configuration (\S \ref{ssec: (4,1)}). 
By considering the locus of lines meeting the cones $\hat{C}_{i}$, 
we also obtain a configuration of divisors on $F(Y_{F,t})$, 
from which we can construct a $(2, 1)$-cycle $\xi_{2}$ on $F(Y_{F,t})$  (\S \ref{ssec: (2,1)}). 
Then we prove the following (\S \ref{sec: indecomposable}). 

\begin{theorem}\label{thm: intro}
When $(F, t)$ is very general, 
the indecomposable parts of the $(2, 1)$-cycle $\xi_2$ and the $(4, 1)$-cycle $\xi_{4}$ 
on $F(Y_{F,t})$ are non-torsion. 
\end{theorem}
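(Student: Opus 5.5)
The proof proceeds by degeneration, as the abstract indicates: we let $t\to 0$. The fiber $Y_{F,0}$ is the cuspidal cubic fourfold $x_4^3 - F + x_5(x_0x_3-x_1x_2)=0$. Its Fano variety of lines is, after resolution, birational to (or degenerates in a controlled way to) the Hilbert square $S^{[2]}$ of the $K3$ surface $S$. Here $S$ is the triple cover of the quadric surface $Q=\{x_0x_3=x_1x_2\}$ branched along the genus $4$ curve $Q\cap\{F=0\}$, i.e. $S:\ x_4^3=F$ in $\proj(1,1,1,1,\cdot)$ over $Q$, a $K3$ surface of degree $6$ with an order $3$ automorphism.

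\textbf{Step 1 (the limit $t=0$).} Under the degeneration the cones $\hat C_i$ specialize to curves on $S$. By the tangency normalization, the lines $x_0=x_i=0$ on $Q$ pull back to cuspidal curves $C_i\subset S$, and the $(2,1)$-cycle on $S$ is built from them together with a rational function whose divisor is supported on $C_1,C_2$. This is the standard construction of $(2,1)$-cycles on $K3$ surfaces from cuspidal/nodal rational curves, as in \cite{MS}, \cite{Sa}. I will check that the specializations of $\xi_2$ and $\xi_4$ are precisely the cycles induced on $S^{[2]}$ by the method of \S\ref{sec: K3 induce} from this $(2,1)$-cycle $\xi_S$ on $S$. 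This is a matter of comparing supports: lines meeting $\hat C_i$ specialize to the divisors $\{Z : Z\cap C_i\ne\emptyset\}$, and the curve $C_i\subset F(Y)$ specializes to a curve in $S^{[2]}$ lying over $C_i$.

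\textbf{Step 2 (specialization).} We apply the specialization map for higher Chow groups and Deligne cohomology. The family over a disc around $t=0$ is projective, and the regulator of the indecomposable part is compatible with specialization: a Hodge-theoretic regulator (the transcendental part of the Abel--Jacobi map to $H^{2p-1}(-,\C)/(F^p+H_\Z)$, evaluated against the symplectic form or its powers) varies holomorphically and extends to the limit. Hence if the limit cycle has non-torsion indecomposable part, so does the cycle for $(F,t)$ very general. The ``very general'' condition enters through the usual argument that the locus where a fixed cycle becomes torsion-indecomposable is a countable union of proper analytic subsets. One must handle that $F(Y_{F,0})$ is singular; I plan to work on a semistable (or at least smooth-total-space) model, or to use the regulator on the smooth part, with the degeneration results of \cite{BHS}.

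\textbf{Step 3 (the limit cycle).} Using the results of \S\ref{sec: K3 induce}, indecomposability of the induced cycles on $S^{[2]}$ reduces to indecomposability of $\xi_S$ on $S$. The transcendental regulator of the induced $(2,1)$- and $(4,1)$-cycles is expressed, via $H^2(S^{[2]})\supset H^2(S)$ and its square, in terms of the regulator of $\xi_S$ paired with $\omega_S$. For $\xi_S$ itself I would use the order $3$ automorphism: the regulator pairing against the $\zeta_3$-eigen transcendental class is a period integral over a membrane bounded by the $1$-cycle on $C_1\cup C_2$. I then show it is not a period of $H^2(S,\Z)$ for very general $F$, either by a degeneration to a further special $F$ where it can be computed, or by differentiating in $F$ (an infinitesimal-invariant / Griffiths transversality argument). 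I expect this last point, together with making the specialization rigorous across the singular central fiber in Step 2, to be the main obstacle. I would first try the infinitesimal-invariant approach on $S$, since \S\ref{sec: K3 induce} presumably already transfers the answer to $S^{[2]}$.
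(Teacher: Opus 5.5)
\textbf{Gaps.} Your outline for $\xi_2$ (degenerate $t\to 0$, identify the limit with $\xi^{[2]}$, transfer through \S\ref{sec: K3 induce}) matches the paper. However, Step 2 as written does not work, and the $\xi_4$ half takes a route that is unsupported and avoidable.

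\textbf{The deformation step for $\xi_2$.} Neither indecomposability nor $\nu_{\rm tr}$ is stable under deformation. The torus $J^2(X_t)_{\rm tr}$ is a quotient by ${\rm NS}(X_t)\otimes\C^\times$, and ${\rm NS}(X_t)$ jumps. So there is no holomorphic family to which ``non-torsion at $t=0$ implies non-torsion nearby'' applies. The missing idea is the $G$-Jacobian:
\begin{itemize}
\item After base change, pass to Namikawa's simultaneous resolution $X\to F(Y/\Delta)$. This is a smooth family with $X_0=S^{[2]}$.
\item Quotient $R^2\pi_\ast\Z$ by the rank-one local system $H^2(X_t,\Z)^G=\Z\mathcal{O}(1)$ of \cite{BCS}. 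It extends over $t=0$ into $H^2(S^{[2]},\Z)^G$.
\item The result is a holomorphic family of tori $\mathcal{J}$ with $\mathcal{J}_t=J^2(F(Y_t),G)$ and $\mathcal{J}_0\twoheadrightarrow J^2(S^{[2]},G)$.
\item Non-torsion of $\nu_G(\xi^{[2]})$ comes from Theorem~\ref{thm: EK3} and Lemma~\ref{prop: G-regulator (2,1)}; you may cite this rather than reprove it. It yields non-torsion of $\nu_G(\xi_2)$ for very general $(F,t)$, where $J^2(\cdot,G)=J^2(\cdot)_{\rm tr}$ by Corollary~\ref{cor: G-inv=NS}.
\end{itemize}

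Your alternative of pairing with $\sigma_t$ is deformation-stable. But at $t=0$ it needs the $H^{0,2}$-component of $\nu(\xi^{[2]})$ to be non-torsion modulo periods. That is strictly more than Theorem~\ref{thm: EK3} gives, and it is exactly what you leave as the main obstacle.

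Also, the limit is $\xi^{[2]}+\xi'$ with $\xi'$ supported on $E=E_1+E_2$, not exactly $\xi^{[2]}$. You must check two things:
\begin{itemize}
\item the limit functions on the components $E_\alpha$ exist (the proper-intersection condition);
\item $\xi'$ is decomposable, via a cocycle-condition argument using that $E_1\cap E_2\simeq S$ is irreducible.
\end{itemize}
Minor point: the regulator of a $(p,1)$-cycle lands in $J^{2p-2}$, built from $H^{2p-2}$, not $H^{2p-1}$.

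\textbf{The $(4,1)$-cycle.} Degenerating $\xi_4$ is the wrong route.
\begin{itemize}
\item The curves $\Gamma_i$ parametrize lines through $p_0$. At $t=0$ these become points of ${\rm Sing}\,F(Y_0)=S$, so the limit of $\xi_4$ in $X_0$ lies inside the exceptional divisor $E$.
\item By contrast, $C_i+p\not\subset E$ for general $p$. Identifying the limit with some $\xi_p$ would need a local analysis of the simultaneous resolution along $S$, which your Step 1 does not supply.
\item You would also need a deformation-stable quotient of $J^6$.
\end{itemize}

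The paper avoids all of this with the Fano correspondence used twice. First, $\Phi^{F\to Y}(\xi_4)=(\hat C_1,\hat\phi_1)+(\hat C_2,\hat\phi_2)$. Since $D_i=\pi_F(\pi_Y^{-1}(\hat C_i))$, applying $\Phi^{Y\to F}$ to this returns $\xi_2$. Hence $\nu_{\rm tr}(\xi_2)$ is the image of $\nu_{\rm tr}(\xi_4)$ under a homomorphism $J^6(F(Y))_{\rm tr}\to J^2(F(Y))_{\rm tr}$. The $(4,1)$ statement therefore follows directly from the $(2,1)$ statement, with no second degeneration.
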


Here the \textit{indecomposable part} of a $(p, 1)$-cycle 
is its image in the cokernel of the cup product map 
${\rm CH}^{p-1}(X)\otimes_{\Z}{\C}^{\times} \to {\CH}^{p}(X, 1)$. 
Non-torsionness of the indecomposable part is a strong form of nontriviality of a $K_1$-cycle. 
A \textit{very general} point means a point in the complement of countably many divisors. 

Collino \cite{Co} is our predecessor who constructed infinitely many $(3, 1)$-cycles on general cubic fourfolds. 
Although not mentioned in \cite{Co}, 
this gives rise to $(2, 1)$-cycles on the Fano varieties by taking the Fano correspondence. 
Historically, this should be regarded as the first example of $(2, 1)$-cycles on holomorphic symplectic manifolds. 
Our $(2, 1)$-cycles are new example, somewhat more explicit. 
On the other hand, our $(4, 1)$-cycles would be the first example of such cycles. 
We could say that Theorem \ref{thm: intro} is the first \textit{explicit} example of $K_1$-cycles on holomorphic symplectic manifolds. 

There are both similarities and differences between the Collino construction and ours in several aspects. 
Firstly, both make use of cubic threefolds $W\subset Y$ cut out by a hyperplane, but in different ways. 
In our case, $W$ is the smooth branch divisor of $Y\to {\proj}^4$, 
while in Collino's case, $W$ is $4$-nodal, cut out by a special (but not explicit) hyperplane. 
Secondly, both use a cone $\hat{C}$ over a curve $C$, but the curves are different. 
In our case, $C$ is a union of cuspidal plane cubics, 
while in Collino's case, $C$ is an irreducible $3$-nodal elliptic curve. 
It might happen that our $(2, 1)$-cycles could be obtained as degeneration of one of Collino's cycles, 
but even so, our nontriviality would not follow from Collino's nontriviality; 
rather our cycles would connect Collino's cycles and $K3$ surfaces.   
Anyway, these intersections (some potential) 
would be reflection of richness of cubic fourfolds. 

We explain the idea of the proof of Theorem \ref{thm: intro}. 
The nontriviality of $\xi_{4}$ can be deduced from that of $\xi_{2}$ by using the Fano correspondence \textit{twice}. 
Thus we are reduced to the case of $\xi_2$, 
for which we use a degeneration method (again different from Collino's method). 
If we let $t\to 0$, the cubic threefold $W_{F,0}$ acquires a node, and accordingly, 
the cubic fourfold $Y_{F,0}$ acquires a cusp. 
A $K3$ surface $S$ appears here: 
as the one attached to the genus $4$ curve defined by $F$ by the method of Kond\=o \cite{Ko}. 
It was proved by Boissi\`ere-Heckel-Sarti \cite{BHS} that 
$S^{[2]}$ gives the symplectic resolution of the singular Fano variety $F(Y_{F,0})$. 
A key point is that our cycles $\xi_2$ on $F(Y_{F,t})$ specialize essentially to the cycle on $S^{[2]}$ 
induced from the $(2, 1)$-cycle on $S$ constructed in \cite{Sa}. 
Then the nontriviality proved in \cite{Sa} implies that of $\xi_2$ by a degeneration argument. 
Technically this argument is underpinned by the presence of non-symplectic automorphism, 
which compensates for the instability of the transcendental part of Abel-Jacobi map under deformation. 

The last topic studied in this paper is restriction of $(2, 1)$-cycles to Lagrangian subvarieties (\S \ref{sec: Lagrangian}). 
Given a $(2, 1)$-cycle $\xi$ on a holomorphic symplectic manifold $X$, 
it is natural to study $\xi$ by restricting it to the fibers of a Lagrangian fibration, 
or more generally, to Lagrangian subvarieties $W\subset X$. 
An elementary consideration (Lemma \ref{lem: Lagrangian J2}) raises the possibility that 
$\xi|_{W}$ might be always decomposable, i.e., 
contained in the image of the cup product 
${\rm Pic}(W)\otimes_{\Z}{\C}^{\times} \to {\CH}^{2}(W, 1)$. 
This is a new phenomenon, which trivially holds for $K3$ surfaces but 
seems far more nontrivial in dimension $>2$. 
In spite of the philosophy of holomorphic $2$-forms, 
we are still not confident enough for promoting this to a conjecture (cf.~Remark \ref{rmk: Beilinson Hodge}). 
Our goal is more modest: 
to confirm that this phenomenon holds in three examples in our hand. 
These are 
(i) the Lagrangian fibration associated to an elliptic $K3$ surface, with the induced cycles; 
(ii) the Lagrangian fibration associated to a $K3$ surface of degree $2$ (the Beauville-Mukai system), with the induced cycles; and 
(iii) the cycles $\xi_{2}$ on $F(Y)$ constructed in Theorem \ref{thm: intro}, 
with the Lagrangian surfaces being the Fano surfaces of the hyperplane sections of $Y$. 
In the first two cases the decomposability can be observed immediately, 
while in the last case we need more geometric consideration. 

The rest of this paper is organized as follows. 
\S \ref{sec: preliminary} is recollection of holomorphic symplectic manifolds and higher Chow cycles. 
In \S \ref{sec: K3 induce}, we construct induced cycles on Hilbert squares of $K3$ surfaces. 
\S \ref{sec: cuspidal cubic4} is recollection about $K3$ surfaces and singular cubic fourfolds associated to genus $4$ curves. 
In \S \ref{sec: deformation}, we construct our cycles $\xi_2$ and $\xi_4$. 
In \S \ref{sec: indecomposable}, we prove Theorem \ref{thm: intro}. 
In \S \ref{sec: Lagrangian}, we study restriction of $(2, 1)$-cycles to Lagrangian subvarieties.

We thank Yuji Odaka for instruction on simultaneous resolutions.

\section{Preliminaries}\label{sec: preliminary}

In this section we recall holomorphic symplectic manifolds and higher Chow cycles.

\subsection{Holomorphic symplectic manifolds}\label{ssec: IHS}

By a \textit{holomorphic symplectic manifold}, 
we mean a smooth projective variety $X$ which is simply-connected and 
whose $H^0(\Omega_{X}^{2})$ is spanned by a nowhere degenerate holomorphic $2$-form. 
Note that we assume projectivity in this paper. 
The second integral cohomology $H^2(X, \Z)$ is equipped with 
a certain integral symmetric bilinear form of signature $(3, \ast)$, 
known as the \textit{Beauville-Bogomolov-Fujiki form}.  
The N\'eron-Severi lattice ${\rm NS}(X)=H^{1,1}(X, \Z)$ has signature $(1, \ast)$, 
and is the orthogonal complement of $H^{2,0}(X)$ in $H^2(X, \Z)$. 

\subsection{Higher Chow cycles}\label{ssec: higher Chow}

Let $X$ be a holomorphic symplectic manifold. 
For $1\leq p \leq \dim(X)+1$, 
let ${\CH}^p(X, 1)$ be the higher Chow group of codimension $p$ and K-theoretic degree $1$ 
introduced by Bloch \cite{Bl}. 
An element of ${\CH}^p(X, 1)$ is called a \textit{$(p, 1)$-cycle}. 
%
Two presentations of ${\CH}^p(X, 1)$ are known: 
in terms of the Bloch complex (\cite{Bl}), 
and in terms of the Gersten complex (see, e.g., \cite{MullerStach}). 
We use the latter.  
Then a $(p, 1)$-cycle can be represented by a formal sum 
\begin{equation}\label{eqn: Gersten}
\xi = \sum_i (Z_i, \phi_i), 
\end{equation}
where $Z_i$ is an irreducible subvariety of $X$ of codimension $p-1$ 
and $\phi_i$ is a rational function on the normalization of $Z_i$, 
subject to the relation 
$\sum_{i} {\rm div}(\phi_i)=0$ as a codimension $p$ cycle on $X$ (the cocycle condition). 

We have the cup product map 
\begin{equation*}
{\CH}^{p-1}(X)\otimes_{\Z}{\C}^{\times} \to {\CH}^p(X, 1), \qquad Z\otimes \alpha \mapsto (Z, \alpha), 
\end{equation*}
where ${\C}^{\times}$ is identified with $H^0(X, \mathcal{O}_{X}^{\times}) = {\CH}^1(X, 1)$, 
the group of constant functions. 
We set 
\begin{equation*}
{\CH}^p(X, 1)_{\textrm{ind}} := {\CH}^p(X, 1)/ {\CH}^{p-1}(X)\otimes_{\Z}{\C}^{\times}. 
\end{equation*}
The image of a $(p, 1)$-cycle $\xi$ in ${\CH}^p(X, 1)_{\textrm{ind}}$ 
is called the \textit{indecomposable part} of $\xi$. 
If this is nonzero, $\xi$ is said to be \textit{indecomposable}.

\subsection{Regulators}\label{ssec: regulator}

A basic tool for studying higher Chow cycles is the regulator map. 
We define the \textit{Jacobian} of $X$ in degree $2p-2$ as 
\begin{equation*}
J^{2p-2}(X) := \frac{H^{2p-2}(X, \C)}{H^{2p-2}(X, \Z)+F^p}
\end{equation*}
where $(F^{\bullet})$ is the Hodge filtration. 
This is a generalized complex torus, i.e., the quotient of a ${\C}$-linear space by a lattice (not necessarily of full rank). 
%
%
The Jacobian $J^{2p-2}(X)$ contains 
\begin{equation*}
\frac{H^{p-1,p-1}(X, \Z)\otimes {\C}}{H^{p-1,p-1}(X, \Z)} 
= H^{p-1,p-1}(X, \Z)\otimes_{\Z} {\C}^{\times}
\end{equation*}
as a sub torus. 
We denote by 
\begin{equation*}
J^{2p-2}(X)_{{\rm tr}} := \frac{J^{2p-2}(X)}{H^{p-1,p-1}(X, \Z)\otimes_{\Z} {\C}^{\times}}
\end{equation*}
the quotient, and call it the \textit{transcendental Jacobian} of $X$. 

For simplicity, we assume that $H^{2p-1}(X, \Z)$ has no torsion. 
(This holds in the cases studied in this paper, see \cite{To}.) 
We have the Abel-Jacobi map 
\begin{equation*}
\nu : {\CH}^p(X, 1) \to J^{2p-2}(X) 
\end{equation*}
for $(p, 1)$-cycles, usually called the \textit{regulator map}. 
See \cite{KLM} for more details. 
We have the commutative diagram 
\begin{equation*}
\xymatrix{
{\CH}^{p-1}(X)\otimes_{\Z} {\C}^{\times} \ar[r] \ar[d] & {\CH}^{p}(X, 1) \ar[d]^{\nu} \\ 
H^{p-1,p-1}(X, \Z)\otimes_{\Z} {\C}^{\times} \ar[r] & J^{2p-2}(X), 
}
\end{equation*}
where the left vertical map is the cycle map for ${\CH}^{p-1}(X)$. 
Therefore the composition (the \textit{transcendental regulator map}) 
\begin{equation*}
\nu_{{\rm tr}} : {\CH}^p(X, 1) \to J^{2p-2}(X)_{{\rm tr}} 
\end{equation*}
descends to 
\begin{equation*}
{\CH}^p(X, 1)_{\textrm{ind}} \to J^{2p-2}(X)_{{\rm tr}}. 
\end{equation*}
This implies that, if $\nu_{{\rm tr}}(\xi)$ is nonzero or non-torsion for a $(p, 1)$-cycle $\xi$, 
so is the indecomposable part of $\xi$. 
This will be our way of detecting indecomposability. 


\subsection{Non-symplectic automorphisms}\label{ssec: auto}

One drawback of the transcendental regulator map $\nu_{{\rm tr}}$ 
is that it is not stable under deformation. 
In the case of $K3$ surfaces, it has been observed that 
non-symplectic automorphisms are useful for compensating for this matter (\cite{MS}, \cite{Sa}). 
We extend this formalism to holomorphic symplectic manifolds, specializing $p$ to $2$. 

Let $G$ be a cyclic group of prime order acting on a holomorphic symplectic manifold $X$. 
The $G$-action is called \textit{non-symplectic} if it acts nontrivially on the holomorphic $2$-forms. 
We denote by $H^2(X, \Z)^{G}$ the $G$-invariant part of $H^2(X, \Z)$. 
It is known that 
$H^2(X, \Z)^{G}$ is perpendicular to $H^{2,0}(X)$ and so $H^2(X, \Z)^{G}\subset {\rm NS}(X)$. 
Moreover, $H^2(X, \Z)^{G}$ has signature $(1, \ast)$. 

We define the Jacobian of $(X, G)$ in degree $2$ as 
\begin{equation*}
J^{2}(X, G) := \frac{J^{2}(X)}{H^2(X, \Z)^{G}\otimes_{\Z} {\C}^{\times}}. 
\end{equation*}
Since $H^2(X, \Z)^{G}\subset {\rm NS}(X)$, 
the projection $J^{2}(X)\to J^{2}(X)_{{\rm tr}}$ factors through $J^{2}(X, G)$: 
\begin{equation*}\label{eqn: G-Jacobian transcendental Jacobian}
J^{2}(X)\to J^{2}(X, G) \to J^{2}(X)_{{\rm tr}}. 
\end{equation*}
We have $J^{2}(X, G)=J^{2}(X)_{{\rm tr}}$ if and only if $H^2(X, \Z)^{G}={\rm NS}(X)$. 
Unlike $J^{2}(X)_{{\rm tr}}$, the $G$-Jacobian $J^{2}(X, G)$ has the advantage of being stable under deformation of $(X, G)$. 
We denote the composition of the regulator map $\nu$ and the projection $J^{2}(X)\to J^{2}(X, G)$ by 
\begin{equation*}\label{eqn: G-regulator}
\nu_{G} : {\CH}^2(X, 1)\to J^{2}(X, G), 
\end{equation*}
and call it the \textit{$G$-regulator map}. 
When $H^2(X, \Z)^{G}={\rm NS}(X)$, 
this coincides with $\nu_{{\rm tr}}$. 


\section{Induced cycles from $K3$ surfaces}\label{sec: K3 induce}

In this section, we develop a method of constructing $(p, 1)$-cycles on 
the Hilbert square of a $K3$ surface $S$ from $(2, 1)$-cycles on $S$. 
Some part is valid for Hilbert schemes of larger length, 
but we restrict ourselves to the case of Hilbert square for the sake of simplicity of exposition. 
Since various examples of $(2, 1)$-cycles on $K3$ surfaces 
have been known (see, e.g., \cite{Co}, \cite{CDKL}, \cite{MS}, \cite{Sa} and the references therein), 
we can obtain indecomposable $(p, 1)$-cycles on their Hilbert squares in this way.

\subsection{$(2, 1)$-cycles}\label{ssec: (2,1) K3}

Let $S$ be a projective $K3$ surface. 
We denote by $S^{[2]}$ the Hilbert scheme parametrizing $0$-dimensional subschemes of $S$ of length $2$. 
We have the commutative diagram 
\begin{equation*}
\xymatrix{
{\rm Bl}_{\Delta}S^{2} \ar[r]^{f} \ar[d]_{g} & S^{[2]} \ar[d] \\ 
S^2 \ar[r] & S^{2}/\frak{S}_2
}
\end{equation*}
where $\Delta\subset S^2$ is the diagonal, 
the vertical maps are the blow-up along the diagonal, 
and the horizontal maps are the quotient map by the symmetric group $\frak{S}_{2}$. 
We denote by $\pi \colon S^2 \to S$ the projection to the first factor. 

It is known (\cite{Be}) that 
\begin{equation*}\label{eqn: H correspondence S2}
f_{\ast} \circ g^{\ast} \circ \pi^{\ast} \: : \: 
H^2(S, \Z) \to H^2(S^{[2]}, \Z) 
\end{equation*}
is an embedding of quadratic lattices, 
and gives the orthogonal decomposition 
\begin{equation}\label{eqn: H2S2}
H^2(S^{[2]}, \Z) = H^2(S, \Z) \oplus \Z \delta, 
\end{equation}
where $2\delta =[f(g^{-1}(\Delta))]$ is the class of the exceptional divisor of 
the Hilbert-Chow morphism $S^{[2]}\to S^2/\frak{S}_2$. 
This shows that 
\begin{equation}\label{eqn: J2S2}
J^2(S^{[2]}) \simeq J^2(S) \: \times \: {\C}^{\times} [\delta]. 
\end{equation}

\begin{lemma}\label{prop: induced (2,1)}
The following diagram commutes: 
\begin{equation}\label{eqn: regulator S2}
\xymatrix@C=44pt{
{\CH}^{2}(S, 1) \ar[r]^{f_{\ast} \circ g^{\ast} \circ \pi^{\ast}} \ar[d]_{\nu} & {\CH}^{2}(S^{[2]}, 1) \ar[d]^{\nu} \\ 
J^2(S) \ar@{^{(}->}[r] & J^{2}(S^{[2]}), 
}
\end{equation}
where $J^2(S) \hookrightarrow J^{2}(S^{[2]})$ is induced by \eqref{eqn: J2S2}. 
\end{lemma}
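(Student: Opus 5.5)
The plan is to use functoriality of the regulator map with respect to flat pull-back and proper push-forward. The map $\pi^{\ast}$ is pull-back along a smooth projective morphism and $g^{\ast}$ is pull-back along a morphism of smooth varieties. The map $f_{\ast}$ is push-forward along a finite (hence proper) morphism between smooth projective varieties of the same dimension. For each of these operations, the Abel--Jacobi/regulator map $\nu$ on $(p,1)$-cycles is compatible with the corresponding maps on Deligne cohomology, and hence on the Jacobians $J^{2p-2}$. For $\pi$ and $g$ this is the pull-back $H^2(-)\to H^2(-)$. For $f$ it is the Gysin push-forward $f_{\ast}\colon H^{2}({\rm Bl}_{\Delta}S^2)\to H^{2}(S^{[2]})$, which preserves codimension since $\dim {\rm Bl}_{\Delta}S^2=\dim S^{[2]}$. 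Granting these compatibilities, which are standard (e.g.\ \cite{KLM}), the square \eqref{eqn: regulator S2} commutes with bottom arrow equal to the map $J^2(S)\to J^2(S^{[2]})$ induced by the cohomological correspondence $f_{\ast}\circ g^{\ast}\circ\pi^{\ast}$ on $H^2(-,\C)$ and $H^2(-,\Z)$.

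So the steps are as follows. (1) Recall that $\nu$ is functorial for pull-backs along arbitrary morphisms of smooth projective varieties and for push-forwards along proper morphisms, with the appropriate shift in codimension. (2) Apply this three times to get commutativity with bottom arrow $J^2(f_{\ast}g^{\ast}\pi^{\ast})$. (3) Identify this bottom arrow with the inclusion $J^2(S)\hookrightarrow J^2(S^{[2]})$ from \eqref{eqn: J2S2}. Step (3) holds because, by \cite{Be}, $f_{\ast}g^{\ast}\pi^{\ast}$ on $H^2$ is exactly the lattice embedding giving the first summand of \eqref{eqn: H2S2}. It is a morphism of Hodge structures, so it respects $F^2$, and it sends $H^2(S,\Z)$ isomorphically onto a direct summand of $H^2(S^{[2]},\Z)$. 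Hence on the quotient tori it is the inclusion of the factor $J^2(S)$ in $J^2(S)\times\C^{\times}[\delta]$.

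The main obstacle I expect is step (1) for the push-forward $f_{\ast}$, or more concretely, checking the compatibility at the level of Gersten representatives. If one prefers a hands-on argument, one writes $\xi=\sum(Z_i,\phi_i)$ and computes $\nu(\xi)$ as the current $\sum\int_{Z_i}\log\phi_i\,\omega+\int_{\Gamma}\omega$ on closed forms $\omega$, where $\Gamma$ is a $3$-chain bounding $\sum\phi_i^{-1}([-\infty,0])$. The pull-back $\pi^{\ast}\xi=\sum(Z_i\times S,\phi_i\circ\pi)$ has the same shape. The pull-back $g^{\ast}$ takes strict transforms; the cocycle condition survives because the support of each divisor ${\rm div}(\phi_i)\times S$ meets $\Delta$ properly. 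The push-forward $f_{\ast}$ takes images with norms of functions. The formula for $\nu$ then transforms by the projection formula $\int_{f_{\ast}Z}\eta=\int_Z f^{\ast}\eta$, and similarly for $g$ and $\pi$. I would present the proof citing the functoriality of the regulator and reserve this explicit current computation only as a sanity check for the finite map $f$, where degree-two norms appear.
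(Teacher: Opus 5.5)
Your proposal is correct and is essentially the paper's proof, which just cites the compatibility of the regulator with pull-back and proper push-forward (\cite{We} \S 3). Your step (3), identifying the induced map on $J^2$ with the inclusion coming from \eqref{eqn: H2S2}, is the part the paper leaves implicit. There is one harmless slip in your optional current-level check: the chain $\Gamma$ bounding $\sum_i \phi_i^{-1}([-\infty,0])$ has the same real dimension as the $Z_i$ (a $2$-chain on $S$, a $6$-chain on $S^{[2]}$), not $3$.
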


\begin{proof}
This follows from the compatibility of regulator maps with 
pullback and proper pushforward (see \cite{We} \S 3). 
\end{proof}

For a $(2, 1)$-cycle $\xi$ on $S$, we denote 
\begin{equation*}
\xi^{[2]} := f_{\ast} \circ g^{\ast} \circ \pi^{\ast} (\xi ) \quad \in \: {\CH}^2(S^{[2]}, 1). 
\end{equation*}

\begin{corollary}\label{cor: induced (2,1)}
If $\nu_{{\rm tr}}(\xi)$ is nonzero or non-torsion, so is $\nu_{{\rm tr}}(\xi^{[2]})$. 
\end{corollary}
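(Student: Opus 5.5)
We deduce the corollary from Lemma \ref{prop: induced (2,1)}. What has to be checked is that the embedding $J^2(S)\hookrightarrow J^2(S^{[2]})$ descends to an injection of transcendental Jacobians $J^2(S)_{\rm tr}\hookrightarrow J^2(S^{[2]})_{\rm tr}$.

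First, we describe the Néron–Severi lattices. The decomposition \eqref{eqn: H2S2} is an orthogonal decomposition of Hodge structures, because the map $f_\ast g^\ast\pi^\ast$ is induced by an algebraic correspondence and $\delta$ is an algebraic class. Hence
\begin{equation*}
{\rm NS}(S^{[2]}) = {\rm NS}(S)\oplus \Z\delta .
\end{equation*}
Second, we quotient by the algebraic part. Under \eqref{eqn: J2S2} we have $J^2(S^{[2]})\simeq J^2(S)\times \C^\times[\delta]$, and the subtorus $H^{1,1}(S^{[2]},\Z)\otimes\C^\times$ corresponds to $({\rm NS}(S)\otimes\C^\times)\times \C^\times[\delta]$. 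Taking quotients gives a canonical isomorphism
\begin{equation*}
J^2(S)_{\rm tr}\xrightarrow{\ \sim\ } J^2(S^{[2]})_{\rm tr},
\end{equation*}
and the bottom arrow of \eqref{eqn: regulator S2} induces this isomorphism after passing to the quotients.

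Third, we compose with the regulators. By Lemma \ref{prop: induced (2,1)}, $\nu_{\rm tr}(\xi^{[2]})$ is the image of $\nu_{\rm tr}(\xi)$ under this isomorphism. An isomorphism of groups preserves being nonzero and being non-torsion, which proves the corollary.

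The main point to check carefully is the first step. We need that the splitting \eqref{eqn: H2S2} is compatible with the Hodge filtration, so that \eqref{eqn: J2S2} is an honest isomorphism of generalized tori respecting the lattices and $F^2$. We also need that the image of $H^2(S,\Z)$ meets $H^{1,1}$ exactly in ${\rm NS}(S)$. Both follow because $f_\ast g^\ast\pi^\ast$ is a morphism of Hodge structures that is injective, with $\delta$ spanning a Hodge-type $(1,1)$ complement. Integrality is also fine: the decomposition holds over $\Z$ by \cite{Be}, so the lattice $H^{1,1}(S^{[2]},\Z)$ is exactly ${\rm NS}(S)\oplus\Z\delta$ and no finite-index issue arises.
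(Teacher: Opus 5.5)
Your proposal is correct and is essentially the paper's own proof. The paper likewise projects the diagram of Lemma~\ref{prop: induced (2,1)} to the transcendental Jacobians and observes that the bottom map becomes an isomorphism $J^2(S)_{\rm tr}\simeq J^2(S^{[2]})_{\rm tr}$ by \eqref{eqn: H2S2}; you simply spell out the details it leaves implicit, namely that ${\rm NS}(S^{[2]})={\rm NS}(S)\oplus\Z\delta$ and that the lattice and $F^2$ split compatibly.
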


\begin{proof}
By composing \eqref{eqn: regulator S2} with the projections to the transcendental Jacobians, 
we obtain the commutative diagram 
\begin{equation*}
\xymatrix@C=44pt{
{\CH}^{2}(S, 1) \ar[r]^{f_{\ast} \circ g^{\ast} \circ \pi^{\ast}} \ar[d]_{\nu_{{\rm tr}}} & {\CH}^{2}(S^{[2]}, 1) \ar[d]^{\nu_{{\rm tr}}} \\ 
J^2(S)_{{\rm tr}} \ar[r]^{\simeq} & J^{2}(S^{[2]})_{{\rm tr}}. 
}
\end{equation*} 
Here $J^2(S)_{{\rm tr}} \to J^{2}(S^{[2]})_{{\rm tr}}$ is an isomorphism by \eqref{eqn: H2S2}. 
Thus $\nu_{{\rm tr}}(\xi^{[2]})$ is the image of $\nu_{{\rm tr}}(\xi)$ by this isomorphism. 
This proves our assertion. 
\end{proof}

The following explicit description of $\xi^{[2]}$ will be useful. 
Suppose that $\xi$ is presented as 
$\xi=\sum_{i}(C_i, \phi_i)$ as in \eqref{eqn: Gersten} 
where $C_i$ is an irreducible curve on $S$ and $\phi_i$ is a rational function on (the normalization of) $C_i$. 
We define a divisor of $S^{[2]}$ as 
\begin{equation*}
C_i+S := f(\widetilde{C_i\times S}) \; \;  \subset \: S^{[2]}, 
\end{equation*}
where $\widetilde{C_i\times S}$ is the strict transform of 
$C_i\times S \subset S^2$ in ${\rm Bl}_{\Delta}S^2$. 
Note that $\widetilde{C_i\times S}$ is also the pullback of $C_i\times S$, 
as $C_i\times S$ intersects properly with $\Delta$. 
Since the projection $\widetilde{C_i\times S}\to C_i+S$ is birational, 
we can define a rational function on $C_i+S$ by pulling back $\phi_i$ by 
\begin{equation*}
C_i+S \sim \widetilde{C_i\times S} \stackrel{g}{\to} C_i\times S \stackrel{\pi}{\to} C_i. 
\end{equation*}
We denote it by $\phi_{i}^{[2]}$. 

\begin{lemma}\label{lem: explicit description}
The $(2, 1)$-cycle $\xi^{[2]}$ is represented by $\sum_{i}(C_i+S, \phi_{i}^{[2]})$. 
\end{lemma}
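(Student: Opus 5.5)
We compute $\xi^{[2]} = f_{\ast} g^{\ast} \pi^{\ast}(\xi)$ one step at a time, using the Gersten representation \eqref{eqn: Gersten} and the standard formulas for flat pullback and proper pushforward of $(p,1)$-cycles in that presentation.

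\emph{Step 1 (flat pullback by $\pi$).} The projection $\pi\colon S^2\to S$ is flat. Flat pullback sends a Gersten representative $(Z,\phi)$ to $(\pi^{-1}Z, \phi\circ\pi)$, and $\pi^{-1}(C_i)=C_i\times S$ is irreducible. Hence
\[
\pi^{\ast}\xi=\sum_i (C_i\times S, \phi_i\circ \pi).
\]
The cocycle condition is preserved, because $\mathrm{div}(\phi_i\circ\pi)=\pi^{\ast}\mathrm{div}(\phi_i)$.

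\emph{Step 2 (pullback by the blow-up $g$).} The map $g$ is not flat, so we use the pullback of higher Chow groups along a morphism of smooth varieties, which is defined on cycles meeting the relevant loci properly. The divisors $C_i\times S$ meet the centre $\Delta$ properly: $(C_i\times S)\cap\Delta\cong C_i$ is a curve in the surface $\Delta$, so it has codimension $1$ in $\Delta$. Likewise the codimension-$2$ cycles $\mathrm{div}(\phi_i\circ\pi)=\sum_P \mathrm{ord}_P(\phi_i)\,(P\times S)$ meet $\Delta$ in finitely many points, which is again proper. In this situation $g^{\ast}$ is computed by taking total transforms, and the total transform of $C_i\times S$ coincides with its strict transform $\widetilde{C_i\times S}$, as already noted in the text. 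The function pulls back to $\phi_i\circ\pi\circ g$. Therefore
\[
g^{\ast}\pi^{\ast}\xi=\sum_i (\widetilde{C_i\times S}, \phi_i\circ\pi\circ g).
\]
If one prefers to avoid the general proper-intersection moving lemma, this step can be checked directly. The pullback along a blow-up can be described by restricting to the open set ${\rm Bl}_{\Delta}S^2\setminus E \cong S^2\setminus\Delta$, where $E$ is the exceptional divisor, and then verifying that the resulting formal sum is a cocycle on all of ${\rm Bl}_{\Delta}S^2$. The only issue is that a component of a divisor might be supported on $E$. This does not occur: $\mathrm{div}(\phi_i\circ\pi\circ g)=\sum_P \mathrm{ord}_P(\phi_i)\,g^{\ast}(P\times S)$, and $g^{\ast}(P\times S)$ is the strict transform of $P\times S$, since $P\times S$ meets $\Delta$ in a single point, which is codimension $2$ in $\Delta$ and so in particular proper. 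These divisors still sum to zero. I expect this step to be the main point to justify: one must confirm that the pullback of higher Chow cycles agrees with the naive total transform exactly when the intersections with $\Delta$ are proper.

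\emph{Step 3 (proper pushforward by $f$).} For a proper map, pushforward sends $(Z,\phi)$ to $(f(Z), \mathrm{Nm}(\phi))$ when $\dim f(Z)=\dim Z$, and to $0$ otherwise. The map $f$ is finite of degree $2$, but its restriction $\widetilde{C_i\times S}\to C_i+S$ is birational, since a general point $(c,s)$ with $s\notin C_i$ has as its only preimage in $\widetilde{C_i\times S}$ the point $(c,s)$ itself. So the norm map is simply the transport of the function through this birational identification, and it equals $\phi_i^{[2]}$ by definition. This gives $\xi^{[2]}=\sum_i (C_i+S,\phi_i^{[2]})$. The cocycle condition then follows automatically from the compatibility of $f_\ast$ with $\mathrm{div}$.
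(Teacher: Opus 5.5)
Your main argument is correct and is essentially the paper's: flat pullback by $\pi$, then $g^{\ast}$ computed naively because $C_i\times S$ and ${\rm div}(\phi_i\circ\pi)$ meet $\Delta$ properly, then $f_{\ast}$ with trivial norm by birationality. The paper makes the middle step precise by factoring $g$ as a regular embedding into a $\proj^2$-bundle followed by the flat projection, so that no moving is needed. Your optional ``direct check'' via ${\rm Bl}_{\Delta}S^2\setminus E$ does not suffice on its own: by the localization sequence, agreement on the complement of $E$ determines $g^{\ast}\pi^{\ast}\xi$ only up to the image of $\CH^1(E,1)=\C^{\times}$, i.e.\ up to a term $(E,c)$, so it proves the lemma only modulo the decomposable cycle $(f(E),c)$.
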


\begin{proof}
Since the blow-up map 
$g\colon {\rm Bl}_{\Delta}S^2 \to S^2$ 
is not flat, 
we need to recall explicit description of the pullback by $g$ on $(2, 1)$-cycles. 
By the construction of blow-up, 
$g$ factorizes as 
${\rm Bl}_{\Delta}S^2 \stackrel{i}{\hookrightarrow} {\proj}_{S^2}\stackrel{p}{\to} S^2$ 
where ${\proj}_{S^2}$ is a ${\proj}^2$-bundle over $S^2$ and $i$ is a regular closed immersion. 
Then the pullback by $g$ is the following process: 
(1) take the flat pullback by $p$; 
(2) replace it by a rationally equivalent cycle which intersects properly with ${\rm Bl}_{\Delta}S^2$; and  
(3) take the naive restriction to ${\rm Bl}_{\Delta}S^2 \subset {\proj}_{S^2}$. 
Note that the process (2) is in general assured by Bloch's moving lemma \cite{Bl2}. 

In our case $\xi=\sum_{i}(C_i, \phi_i)$, 
we have 
\begin{equation*}
p^{\ast} \circ \pi^{\ast} (\xi) \: = \: 
\sum_{i} (p^{\ast}(C_i\times S), \: \phi_i \circ \pi \circ p) 
\end{equation*}
by the flat pullback. 
Since $C_i\times S$ and ${\rm div}(\phi_i \circ \pi )$ intersect properly with $\Delta\subset S^2$, 
we see that $p^{\ast}(C_i\times S)$ and ${\rm div}(\phi_i \circ \pi \circ p)$ 
intersect properly with ${\rm Bl}_{\Delta}S^2 \subset {\proj}_{S^2}$. 
This means that $p^{\ast} \circ \pi^{\ast} (\xi)$ already satisfies the proper intersection condition. 
Therefore we have 
\begin{equation*}
g^{\ast} \circ \pi^{\ast} (\xi) =
\sum_{i}(\widetilde{C_i \times S}, \: \phi_i \circ \pi \circ g). 
\end{equation*}

Finally, by the proper pushforward, we have 
\begin{equation*}
f_{\ast} \circ g^{\ast} \circ \pi^{\ast} (\xi) =
\sum_{i}(C_{i}+S, \: N(\phi_i \circ \pi \circ g)) 
\end{equation*}
where $N$ is the norm map from the function field of $\widetilde{C_i\times S}$ to that of $C_i + S$. 
Since $\widetilde{C_i\times S}\to C_i + S$ is birational, 
we have $N(\phi_i \circ \pi \circ g)= \phi_{i}^{[2]}$. 
\end{proof}

Later we need the $G$-regulator version of 
Lemma \ref{prop: induced (2,1)} and Corollary \ref{cor: induced (2,1)}. 
Suppose that we have a non-symplectic action of a cyclic group $G$ of prime order on $S$. 
The induced $G$-action on $S^{[2]}$ is also non-symplectic. 

\begin{lemma}\label{prop: G-regulator (2,1)}
We have the following commutative diagram: 
\begin{equation*}
\xymatrix@C=44pt{
{\CH}^{2}(S, 1) \ar[r]^{f_{\ast} \circ g^{\ast} \circ \pi^{\ast}} \ar[d]_{\nu_{G}} & {\CH}^{2}(S^{[2]}, 1) \ar[d]^{\nu_{G}} \\ 
J^2(S, G) \ar[r]^{\simeq} & J^{2}(S^{[2]}, G), 
}
\end{equation*}
where the lower isomorphism is induced by \eqref{eqn: J2S2}.  
In particular, 
if $\nu_{G}(\xi)$ is nonzero or non-torsion, so is $\nu_{G}(\xi^{[2]})$. 
\end{lemma}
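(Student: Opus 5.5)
The plan is to derive this from Lemma \ref{prop: induced (2,1)} by passing to quotients. The regulator square \eqref{eqn: regulator S2} already commutes. So it suffices to check two things. First, the embedding $J^2(S)\hookrightarrow J^2(S^{[2]})$ coming from \eqref{eqn: J2S2} carries $H^2(S,\Z)^G\otimes\C^\times$ into $H^2(S^{[2]},\Z)^G\otimes\C^\times$. Second, the induced map $J^2(S,G)\to J^2(S^{[2]},G)$ is an isomorphism. Commutativity of the new square then follows at once, because each $\nu_G$ is $\nu$ followed by the quotient projection.

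The key input is that the decomposition \eqref{eqn: H2S2} is $G$-equivariant. The map $f_\ast\circ g^\ast\circ\pi^\ast$ on $H^2$ agrees with the map $\alpha\mapsto$ (class of the divisor ``$\alpha+S$''), which is symmetric under $\frak{S}_2$. Since the $G$-action on $S^{[2]}$ is induced from the diagonal action on $S^2$, it commutes with $f$, $g$ and the symmetrization, so the Beauville embedding $H^2(S,\Z)\to H^2(S^{[2]},\Z)$ is $G$-equivariant. The class $\delta$ is $G$-invariant, because $G$ preserves the exceptional divisor of the Hilbert--Chow morphism (the diagonal is preserved). Therefore
\begin{equation*}
H^2(S^{[2]},\Z)^G = H^2(S,\Z)^G\oplus\Z\delta .
\end{equation*}
Strictly speaking, equivariance gives $H^2(S^{[2]},\Z)^G\supseteq H^2(S,\Z)^G\oplus\Z\delta$. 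Equality holds because an element $a+n\delta$ with $a\in H^2(S,\Z)$ is fixed exactly when $a$ is fixed.

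Now quotient $J^2(S^{[2]})\simeq J^2(S)\times\C^\times[\delta]$ by $(H^2(S,\Z)^G\otimes\C^\times)\times\C^\times[\delta]$. This gives
\begin{equation*}
J^2(S^{[2]},G)\simeq J^2(S)/(H^2(S,\Z)^G\otimes\C^\times)=J^2(S,G),
\end{equation*}
and the isomorphism is induced by the inclusion of the first factor. This is the lower horizontal isomorphism. The final assertion follows because an isomorphism of groups preserves being nonzero and being non-torsion.

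The main obstacle is the $G$-equivariance of \eqref{eqn: H2S2}, specifically that the cohomological correspondence $f_\ast g^\ast\pi^\ast$ commutes with $G$. I would handle it by observing that $G$ acts on the whole diagram ${\rm Bl}_\Delta S^2\to S^{[2]}$, $S^2\to S$ compatibly: $g$, $f$ and $\pi$ are all $G$-equivariant for the diagonal action. Functoriality of pushforward and pullback under automorphisms then gives the commutation. Everything else is formal quotient bookkeeping.
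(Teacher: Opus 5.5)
Your proposal is correct and is essentially the paper's proof. You compose the regulator square \eqref{eqn: regulator S2} with the projections to the $G$-Jacobians, use that $\pi$, $g$, $f$ are $G$-equivariant, and use $H^2(S^{[2]},\Z)^G = H^2(S,\Z)^G\oplus\Z\delta$ to get that $J^2(S,G)\to J^2(S^{[2]},G)$ is an isomorphism, simply spelling out the quotient bookkeeping that the paper leaves implicit.
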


\begin{proof}
This is obtained by composing \eqref{eqn: regulator S2} with the projections to the respective $G$-Jacobians. 
Note that the relevant maps $\pi$, $g$, $f$ are all $G$-equivariant. 
The induced map $J^2(S, G) \to J^{2}(S^{[2]}, G)$ is an isomorphism because 
$H^2(S^{[2]}, \Z)^{G} = H^2(S, \Z)^{G} \oplus \Z \delta$. 
\end{proof}

\subsection{$(4, 1)$-cycles}\label{ssec: (4,1) K3}

Let $S$ and $\xi=\sum_{i}(C_i, \phi_i)$ be as in \S \ref{ssec: (2,1) K3}. 
Next we construct $(4, 1)$-cycles on $S^{[2]}$. 
We take a point $p\in S$ which is not contained in the support of ${\rm div}(\phi_i)$ for any $i$. 
Let $\widetilde{C_{i}\times p}$ be the strict transform of the curve 
$C_i\times p \subset S^2$ in ${\rm Bl}_{\Delta}S^2$,  
and $C_{i}+p$ be the image of $\widetilde{C_{i}\times p}$ in $S^{[2]}$. 
Since $C_{i}+p$ is birational to $C_i$, 
we can regard $\phi_i$ as a rational function on (the normalization of) $C_{i}+p$. 
We use the same notation $\phi_i$ for it. 
Then we put 
\begin{equation*}
\xi_{p} := \sum_{i} (C_{i}+p, \: \phi_i). 
\end{equation*}
By the assumption on $p$, $\xi_{p}$ satisfies the cocycle condition 
and so defines a $(4, 1)$-cycle on $S^{[2]}$. 
When $p\not\in C_i$ for any $i$, 
we see from Lemma \ref{lem: explicit description} that 
$\xi_{p}$ is the intersection of the $(2, 1)$-cycle $\xi^{[2]}$ with the $2$-cycle $p+S$ on $S^{[2]}$. 
The cycle $\xi_{p}$ is defined even when $p\in C_i$; 
we only require $p\not\in {\rm div}(\phi_i)$.

\begin{proposition}\label{prop: trans regulator (4,1)}
If $\nu_{{\rm tr}}(\xi)$ is nonzero or non-torsion, so is $\nu_{{\rm tr}}(\xi_{p})$. 
\end{proposition}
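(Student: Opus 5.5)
The plan is to realize $\xi_p$ as the proper pushforward of a $(2,1)$-cycle on a surface. We then use the compatibility of regulators with pullback and proper pushforward (\cite{We} \S 3), as in Lemma \ref{prop: induced (2,1)}, and finish by a cohomological pairing computation.

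\textbf{Step 1: geometric realization.} Let $\beta\colon \widetilde{S}:={\rm Bl}_p S\to S$ be the blow-up at $p$. The map $x\mapsto x+p$ extends to a closed embedding $j_p\colon \widetilde{S}\hookrightarrow S^{[2]}$, whose image is the surface $p+S$; the exceptional curve maps to the $\proj^1$ of length-$2$ subschemes supported at $p$. Since $p\notin {\rm div}(\phi_i)$, the cycle $\beta^\ast\xi$ is represented by $\sum_i(\widetilde{C_i},\phi_i\circ\beta)$, where $\widetilde{C_i}$ is the strict transform. This is legitimate because the total transform differs from the strict transform only by a multiple of the exceptional curve $E$, and $\phi_i$ is regular and nonzero at $p$, so no $E$-term appears. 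The image $j_p(\widetilde{C_i})$ is exactly $C_i+p$, so
\[ \xi_p = j_{p\ast}\beta^\ast\xi \in \CH^4(S^{[2]},1). \]
By functoriality of the regulator, $\nu(\xi_p)$ is the image of $\nu(\xi)$ under the map $J^2(S)\to J^6(S^{[2]})$ induced by the Gysin map $j_{p\ast}\beta^\ast\colon H^2(S)\to H^6(S^{[2]})$.

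\textbf{Step 2: descent to transcendental parts.} The map $j_{p\ast}\beta^\ast$ sends ${\rm NS}(S)$ into algebraic classes in $H^{3,3}(S^{[2]},\Z)$, so it descends to a map
\[ J^2(S)_{\rm tr}\to J^6(S^{[2]})_{\rm tr}. \]
It therefore suffices to show that this descended map has finite kernel. Then $\nu_{\rm tr}(\xi)$ non-torsion implies $\nu_{\rm tr}(\xi_p)$ non-torsion. For the ``nonzero'' case we need injectivity, which requires controlling the finite kernel; see Step 3.

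\textbf{Step 3: the pairing computation.} Let $\theta=f_\ast g^\ast\pi^\ast\colon H^2(S)\to H^2(S^{[2]})$. Restricting to $p+S$, we have $j_p^\ast\theta(\gamma)=\beta^\ast\gamma$, because the contribution of the fixed point $p$ is zero in degree $2$. By the projection formula, for $\alpha,\gamma\in H^2(S)$,
\[ \int_{S^{[2]}} j_{p\ast}\beta^\ast\alpha\cup\theta(\gamma)=\int_{\widetilde S}\beta^\ast\alpha\cup\beta^\ast\gamma=(\alpha,\gamma)_S. \]
Pairing with $\theta(T)$, where $T=T(S)$ is the transcendental lattice, gives a homomorphism
\[ J^6(S^{[2]})\to {\rm Hom}(T,\C)/(\text{image of } H^6(S^{[2]},\Z)+F^4). \]
This homomorphism kills the algebraic part $H^{3,3}(S^{[2]},\Z)\otimes\C^\times$, since $H^{3,3}$ classes pair to zero with $\theta(T)\subset$ transcendental classes. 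Composed with $J^2(S)_{\rm tr}$, the map is induced by the intersection form, which identifies $H^2(S)/({\rm NS}(S)+F^2)$ with $T^\vee\otimes\C/(\ldots)$ after tensoring with $\Q$. Hence the composite has finite kernel, of order bounded by the discriminant of $T$. For the ``nonzero'' statement I would instead argue directly, using that $H^6(S^{[2]},\Z)$ is dual to $H^2(S^{[2]},\Z)=\theta H^2(S,\Z)\oplus\Z\delta$, which is unimodular on the $\theta$-part. This lets one identify $J^6(S^{[2]})_{\rm tr}$ with $J^2(S^{[2]})_{\rm tr}\simeq J^2(S)_{\rm tr}$ compatibly with the map above.

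\textbf{Main obstacle.} I expect the main obstacle to be this last bookkeeping. One must check precisely that the Hodge classes $H^{3,3}(S^{[2]},\Z)$ are exactly the annihilator of the transcendental part of $H^2$ under Poincaré duality, so that $J^6_{\rm tr}$ is dual to the transcendental piece. One must also check that $\delta$ and the exceptional curve of $\widetilde S$ contribute only algebraic classes. The Hodge-theoretic duality between $H^6$ and $H^2$ of $S^{[2]}$ should handle this.
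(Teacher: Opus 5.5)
Your argument is correct, but it follows a different route from the paper's. The paper stays at the level of cycles on the fourfold ${\rm Bl}_{\Delta}S^2$. It lifts $\xi_p$ to $\xi_p'=\sum_i(\widetilde{C_i\times p},\phi_i)$, with $(\pi\circ g)_\ast\xi_p'=\xi$ and $f_\ast\xi_p'=\xi_p$. If $\nu_{\rm tr}(\xi_p)=0$, it applies $f^\ast$, uses $f^\ast f_\ast=1+\iota_\ast$, and then pushes forward by $(\pi\circ g)_\ast$. The $\iota_\ast$-term dies because $\iota_\ast\xi_p'$ is supported on curves contracted to $p$, leaving $\nu_{\rm tr}(\xi)=0$.

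You instead realize $\xi_p$ on the surface $p+S\simeq{\rm Bl}_pS$ and argue by a Hodge-theoretic pairing. Your identity $j_p^\ast\theta=\beta^\ast$ (``the fixed point contributes nothing in degree $2$'') is exactly the cohomological shadow of the paper's $1+\iota_\ast$ trick. The paper's route is shorter and needs no lattice theory. Yours gives a stronger, cycle-independent statement: $j_{p\ast}\beta^\ast$ induces an injective map $J^2(S)_{\rm tr}\to J^6(S^{[2]})_{\rm tr}$.

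Two corrections to the details.

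\textbf{The $E$-terms in Step 1.} If $p\in C_i$, then $\beta^\ast\xi$ is not literally $\sum_i(\widetilde{C_i},\phi_i\circ\beta)$. The pullback picks up terms $(E,c)$, where $c$ runs over the values of $\phi_i$ on the branches of $C_i$ through $p$. Already for a constant function one has $\beta^\ast(C\otimes c)=(\widetilde{C}+mE)\otimes c$ with $m$ the multiplicity of $C$ at $p$. These terms are decomposable, so $\nu_{\rm tr}$ is unaffected. More cleanly, take $\xi''=\sum_i(\widetilde{C_i},\phi_i\circ\beta)$ directly: it satisfies $\beta_\ast\xi''=\xi$ and $j_{p\ast}\xi''=\xi_p$, and $\beta_\ast\colon J^2(\widetilde{S})_{\rm tr}\to J^2(S)_{\rm tr}$ is an isomorphism.

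\textbf{Step 3 already gives injectivity.} Neither the separate ``nonzero'' argument nor your ``main obstacle'' is needed. Since $H^2(S,\Z)$ is unimodular and $T$ is primitive, $H^2(S,\Z)\to T^\vee$ is onto. Hence the preimage, under $\alpha\mapsto(\alpha,\cdot)|_T$, of $T^\vee$ plus the annihilator of $F^1T_{\C}$ is exactly $H^2(S,\Z)+{\rm NS}(S)_{\C}+F^2$. So $J^2(S)_{\rm tr}$ maps isomorphically onto the corresponding quotient of $T^\vee_{\C}$, and no discriminant of $T$ enters. You then only need three containments for the pairing with $\theta(T)$:
\begin{itemize}
\item it sends $H^6(S^{[2]},\Z)$ into $T^\vee$;
\item it sends $F^4$ into the annihilator of $F^1T_{\C}$, since $F^4\cup F^1\subset F^5H^8=0$;
\item it sends integral $(3,3)$-classes to $0$.
\end{itemize}
The third holds because the resulting rational functional on $T$ vanishes on $T^{2,0}$ by type, so it vanishes identically by minimality of $T$, exactly as in the proof of Lemma \ref{lem: Lagrangian J2}.
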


\begin{proof}
Let $\xi_{p}'$ be the $(4, 1)$-cycle 
$\sum_{i}(\widetilde{C_{i}\times p}, \phi_i)$ on ${\rm Bl}_{\Delta}S^2$. 
By construction we have 
\begin{equation}\label{eqn: xip'}
(\pi \circ g)_{\ast} \xi_{p}' = \xi, \quad f_{\ast}\xi_{p}'=\xi_{p}. 
\end{equation}
We consider the diagram 
\begin{equation*}
\xymatrix@C=40pt{
{\CH}^{2}(S, 1) \ar[d]_{\nu} & 
{\CH}^{4}({\rm Bl}_{\Delta}S^2, 1) \ar[l]_{(\pi \circ g)_{\ast}}  \ar[r]^{f_{\ast}} \ar[d]^{\nu} & {\CH}^{4}(S^{[2]}, 1) \ar[d]^{\nu} \\ 
J^2(S)  & J^{6}({\rm Bl}_{\Delta}S^2) \ar[l]^{(\pi \circ g)_{\ast}} \ar[r]_{f_{\ast}} & J^6(S^{[2]}) 
}
\end{equation*}
which commutes by \cite{We} \S 3. 
By \eqref{eqn: xip'}, we have 
\begin{equation*}
(\pi \circ g)_{\ast} \nu( \xi_{p}') = \nu(\xi), \quad f_{\ast}\nu(\xi_{p}') = \nu(\xi_{p}). 
\end{equation*}
Taking the projection to the transcendental Jacobians, we obtain 
\begin{equation}\label{eqn: nutrxip}
(\pi \circ g)_{\ast} \nu_{{\rm tr}}( \xi_{p}') = \nu_{{\rm tr}}(\xi), \quad 
f_{\ast}\nu_{{\rm tr}}(\xi_{p}') = \nu_{{\rm tr}}(\xi_{p}). 
\end{equation}

Now we shall show that $\nu_{{\rm tr}}(\xi)\ne 0$ implies $\nu_{{\rm tr}}(\xi_{p})\ne 0$. 
(The case of non-torsion is similar.) 
Suppose that $\nu_{{\rm tr}}(\xi_{p})= 0$. 
Then $f_{\ast}\nu_{{\rm tr}}(\xi_{p}')=0$ by \eqref{eqn: nutrxip}. 
Since $f^{\ast}\circ f_{\ast}=1+\iota_{\ast}$ where 
$\iota$ is the switch involution of ${\rm Bl}_{\Delta}S^2$, 
we obtain 
$\nu_{{\rm tr}}(\xi_{p}')+\iota_{\ast}\nu_{{\rm tr}}(\xi_{p}')=0$. 
Sending this equality by $(\pi\circ g)_{\ast}$, we obtain 
\begin{eqnarray*}
0 & = & 
(\pi\circ g)_{\ast}\nu_{{\rm tr}}(\xi_{p}')+(\pi\circ g \circ \iota)_{\ast}\nu_{{\rm tr}}(\xi_{p}') \\ 
& = & \nu_{{\rm tr}}(\xi) + \nu_{{\rm tr}}((\pi\circ g \circ \iota)_{\ast}\xi_{p}') \\ 
& = & \nu_{{\rm tr}}(\xi). 
\end{eqnarray*}
Here the second equality follows from \eqref{eqn: nutrxip}, 
and the last equality holds because $\iota_{\ast}\xi_{p}'$ is contracted to the point $p$ by $\pi \circ g$. 
\end{proof}

The $G$-regulator version holds similarly.

\subsection{$(3, 1)$-cycles}\label{ssec: (3,1) K3}

Let $S$ and $\xi=\sum_{i}(C_i, \phi_i)$ be as before. 
Next we construct $(3, 1)$-cycles. 
Let $C$ be an irreducible curve on $S$ which is disjoint from ${\rm div}(\phi_i)$ for any $i$. 
In particular, $C\ne C_i$ for any $i$. 
We can define the surface $C_i+C$ of $S^{[2]}$ as before, 
which is naturally birational to $C_i\times C$. 
Then 
\begin{equation*}
\xi_{C} := \sum_i(C_i+C, \: \phi_{i}) 
\end{equation*}
is a $(3, 1)$-cycle on $S^{[2]}$, 
where $\phi_{i}$ stands for the induced rational function 
\begin{equation*}
C_i+C \sim C_i\times C \to C_i \stackrel{\phi_i}{\dashrightarrow} {\proj}^1. 
\end{equation*}
By the computation of intersection product 
\begin{equation*}
(C_i+S. \, C+S) = (C_i+C) + \sum_{p\in C_i\cap C}(S+p),  
\end{equation*}
we see from Lemma \ref{lem: explicit description} that  
\begin{equation}\label{eqn: (3,1) intersection}
\xi_{C} = (\xi^{[2]}. \, C+S) - \sum_{i}\sum_{p\in C_i\cap C}(S+p, \: \phi_i(p)), 
\end{equation}
where $\phi_i(p)$ in the last term means a constant function on $S+p$. 
Thus, modulo decomposable cycles, $\xi_{C}$ is essentially the intersection product of 
$\xi^{[2]}$ with the divisor $C+S$. 

\begin{proposition}
If $\nu_{{\rm tr}}(\xi)$ is non-torsion and $C$ is ample, 
then $\nu_{{\rm tr}}(\xi_C)$ is non-torsion. 
\end{proposition}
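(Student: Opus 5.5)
Since the term $\sum_i\sum_{p}(S+p,\phi_i(p))$ in \eqref{eqn: (3,1) intersection} is decomposable, it maps to zero in $J^4(S^{[2]})_{\rm tr}$. So $\nu_{\rm tr}(\xi_C)=\nu_{\rm tr}(\xi^{[2]}\cdot [C+S])$, and it suffices to control the regulator of the intersection with the divisor $D:=C+S$. Regulators are compatible with cup product by an algebraic cycle class. Hence $\nu(\xi^{[2]}\cdot D)=\nu(\xi^{[2]})\cup [D]$, where $\cup[D]\colon J^2(S^{[2]})\to J^4(S^{[2]})$ is induced by $H^2(S^{[2]},\C)\to H^4(S^{[2]},\C)$, $\alpha\mapsto \alpha\cup[D]$. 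This map preserves integral classes, Hodge filtration shifts and Hodge classes, so it descends to a map $L_D\colon J^2(S^{[2]})_{\rm tr}\to J^4(S^{[2]})_{\rm tr}$ of transcendental Jacobians. Combined with Corollary \ref{cor: induced (2,1)}, which gives $\nu_{\rm tr}(\xi^{[2]})$ non-torsion, it then suffices to show that $L_D$ has finite kernel (isogeny onto its image) on the relevant transcendental parts.

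Concretely, $J^2(S^{[2]})_{\rm tr}\simeq T_S\otimes\C/(T_S+F^2)$, where $T_S$ is the transcendental lattice, with $T_S$ embedded via $\alpha\mapsto\alpha+S$ (i.e. $f_*g^*\pi^*\alpha$). One computes $(\alpha+S)\cup (C+S)=\alpha\boxtimes[C]$-type class, i.e. $f_*g^*(\alpha\otimes [C]+[C]\otimes\alpha)$ up to the diagonal correction, which vanishes since $\alpha\cdot[C]=0$ for $\alpha\in T_S$. Pulling back by $f$ and pushing forward to $S\times S$, we land in $H^2(S)\otimes H^2(S)$ with component $\alpha\otimes[C]+[C]\otimes\alpha$. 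Now apply the Künneth projection followed by contraction with $[C]$ in the second factor, i.e. the correspondence $\pi_*(\,\cdot\,\cup \mathrm{pr}_2^*[C])$. This sends the class to $(C^2)\alpha + (\alpha\cdot C)[C]=(C^2)\alpha$. Since $C$ is ample, $C^2>0$, so the composite $T_S\to T_S$ is multiplication by $C^2$. All these operations are induced by algebraic correspondences, so they descend to transcendental Jacobians. In particular $\nu_{\rm tr}$ of a suitable correspondence image of $\xi_C$ equals $(C^2)\nu_{\rm tr}(\xi)$, which is non-torsion. Equivalently, one can run the argument directly at the cycle level: the correspondence $\xi_C\mapsto (\pi\circ g)_*(f^*\xi_C\cdot \mathrm{pr}_2^*C)$ gives $(C^2)\xi$ plus decomposable terms, mirroring the proof of Proposition \ref{prop: trans regulator (4,1)}.

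The main obstacle is bookkeeping. We must check that the correction terms, namely those coming from the exceptional divisor and from the $\iota$-symmetric term, contribute only Hodge or decomposable classes. We must also check that the map on transcendental Jacobians is well defined, i.e. that the images of $H^{1,1}$ and $H^{2,2}$ integral classes go to algebraic classes. I would handle the symmetric term as in Proposition \ref{prop: trans regulator (4,1)}: via $f^*f_*=1+\iota_*$, the $\iota$-part becomes $[C]\otimes\alpha$. Contracting it with $[C]$ in the second factor gives $(\alpha\cdot C)[C]=0$ on the transcendental part. This leaves exactly $(C^2)\nu_{\rm tr}(\xi)$.
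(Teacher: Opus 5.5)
Your argument is correct, but at its one substantive step it takes a different route from the paper. Both proofs use \eqref{eqn: (3,1) intersection} and the compatibility of $\nu$ with intersection by $D=C+S$ to reduce to showing that $L_D\colon J^2(S^{[2]})_{{\rm tr}}\to J^4(S^{[2]})_{{\rm tr}}$ has finite kernel.

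The paper gets this from hard Lefschetz for the semismall Hilbert--Chow morphism (de Cataldo--Migliorini). Since $D$ is pulled back from an ample class on $S^2/\mathfrak{S}_2$, cup product with $[D]^2$ is an isomorphism $H^2\to H^6$ of $\Q$-Hodge structures. You instead exhibit an explicit algebraic left inverse up to an integer, $\Psi(\beta)=(\pi\circ g)_*(f^*\beta\cup g^*{\rm pr}_2^*[C])$, and show that $\Psi\circ L_D$ acts as multiplication by $C^2$ on $J^2(S)_{{\rm tr}}\simeq J^2(S^{[2]})_{{\rm tr}}$.

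The bookkeeping you worry about does not arise. $f^*$ is a ring homomorphism and $f^*(\alpha+S)=g^*({\rm pr}_1^*\alpha+{\rm pr}_2^*\alpha)$, so for $\alpha\in T_S$ one gets $f^*\bigl((\alpha+S)\cup(C+S)\bigr)=g^*(\alpha\otimes[C]+[C]\otimes\alpha)$ with no exceptional term. Two small inaccuracies are harmless: the class itself is $f_*g^*(\alpha\otimes[C])$, so your symmetrized formula is off by a factor $2$; and describing $J^2_{{\rm tr}}$ with $T_S$ rather than its dual lattice only changes things up to isogeny.

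What your route buys:
\begin{itemize}
\item It is elementary: K\"unneth, $f^*f_*=1+\iota^*$ and the projection formula, with no decomposition theorem.
\item Ampleness enters only through $C^2\neq 0$. Contracting with a very ample class $h$ instead, i.e.\ using $\Psi_h(\beta)=(\pi\circ g)_*(f^*\beta\cup g^*{\rm pr}_2^*h)$, gives $(C\cdot h)\alpha$ modulo ${\rm NS}(S)_{\C}$. So the statement holds for every irreducible $C$ avoiding the ${\rm div}(\phi_i)$.
\item Since $\Psi_h$ comes from a correspondence, it acts on cycles too. For general $H\in|h|$ one gets $\Psi_h(\xi_C)=(C\cdot H)\,\xi$ plus decomposable terms $(C,\phi_i(q))$, $q\in C_i\cap H$. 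This transfers non-torsionness of the indecomposable part directly, without regulators.
\end{itemize}
Your cycle-level aside using $C$ itself would first require moving $C$, since $C_i\times C\subset S\times C$ is an improper intersection.

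The paper's argument is shorter and more conceptual, but it rests on a deep theorem and really does need ampleness: if $C^2=0$, cup product with $[D]^2$ kills $T_S$.
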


\begin{proof}
We write $D$ for the divisor $C+S$ of $S^{[2]}$. 
By \eqref{eqn: (3,1) intersection}, 
we have $\nu_{{\rm tr}}(\xi_{C})=\nu_{{\rm tr}}(\xi^{[2]}. \, D)$. 
By the compatibility of regulator map and intersection product (\cite{We}), 
we see that $\nu(\xi^{[2]}. \, D)$ is the image of $\nu(\xi^{[2]})$ under the map 
$J^2(S^{[2]})\to J^4(S^{[2]})$ induced from the cup product 
$H^2(S^{[2]}, \Z)\to H^4(S^{[2]}, \Z)$ with $[D]$. 
Since $\nu_{{\rm tr}}(\xi^{[2]})$ is non-torsion by Corollary \ref{cor: induced (2,1)}, 
it suffices to show that the induced map 
$J^2(S^{[2]})_{{\rm tr}}\to J^4(S^{[2]})_{{\rm tr}}$ has finite kernel. 

Since $C$ is ample, $D$ is the pullback of an ample divisor on $S^2/\frak{S}_{2}$. 
Since $S^{[2]}\to S^2/\frak{S}_2$ is a semi-small resolution, 
we see from \cite{dCM} that 
the Hard Lefschetz theorem holds for $D$, that is, 
the cup product with $[D]^2$ 
\begin{equation*}
H^2(S^{[2]}, \Q) \longrightarrow  H^6(S^{[2]}, \Q) 
\end{equation*}
is an isomorphism of $\Q$-Hodge structures. 
Thus the composition 
\begin{equation*}
J^2(S^{[2]})_{{\rm tr}} \stackrel{(\cdot . D)}{\longrightarrow}
J^4(S^{[2]})_{{\rm tr}} \stackrel{(\cdot . D)}{\longrightarrow}
J^6(S^{[2]})_{{\rm tr}} 
\end{equation*}
has finite kernel. 
This proves our assertion. 
\end{proof}

\section{Eisenstein $K3$ surfaces and cuspidal cyclic cubic fourfolds}\label{sec: cuspidal cubic4}

This section is preliminaries for the next \S \ref{sec: deformation}. 
In \S \ref{ssec: EK3}, we recall Kond\=o's construction of Eisenstein $K3$ surfaces attached to genus $4$ curves \cite{Ko}. 
In \S \ref{ssec: ccc4}, we recall cuspidal cubic fourfolds attached to those $K3$ surfaces following 
Boissi\`ere-Heckel-Sarti \cite{BHS}. 
In \S \ref{ssec: (2,1) EK3}, we recall the $(2, 1)$-cycles on those $K3$ surfaces constructed in \cite{Sa}. 

Throughout this paper, the Fano variety of lines on a cubic fourfold $Y\subset {\proj}^5$ 
will be denoted by $F(Y)$. 
We use the notation $[x_0, \cdots, x_n]$ for the homogeneous coordinates on ${\proj}^n$.

\subsection{Eisenstein $K3$ surfaces}\label{ssec: EK3}

Let $Q\subset {\proj}^3$ be the smooth quadric surface defined by $x_0x_3=x_1x_2$. 
We take a general cubic form $F(x_0, \cdots, x_3)$. 
It cuts out a canonical genus $4$ curve $B$ on $Q$. 
By taking the triple cyclic cover of $Q$ branched over $B$, we obtain an Eisenstein $K3$ surface $(S, G)$, 
i.e., a $K3$ surface with a non-symplectic automorphism of order $3$ (\cite{Ko}). 
Explicitly, $S$ is defined as the $(2, 3)$ complete intersection 
\begin{equation}\label{eqn: EK3}
S  \: \: : \: \:  (x_4^3=F(x_0, \cdots, x_3)) \: \cap \: (x_0x_3=x_1x_2) 
\end{equation}
in ${\proj}^4$. 
The non-symplectic automorphism is given by 
$x_4\mapsto \exp(2\pi i/3) x_4$. 
Note that the quadric defined by $x_0x_3=x_1x_2$ in ${\proj}^4$ is a cone over $Q$.

\subsection{Cuspidal cyclic cubic fourfolds}\label{ssec: ccc4}

Let $S\subset {\proj}^4$ be as in \eqref{eqn: EK3}. 
We define a cubic fourfold by 
\begin{equation}\label{eqn: ccc4}
Y \: \: : \: \: x_4^3 - F(x_0, \cdots, x_3) + x_5(x_0x_3-x_1x_2) \: = \: 0. 
\end{equation}
Such a cubic fourfold is called a \textit{cuspidal cyclic cubic fourfold} and 
studied extensively by Boissi\`ere-Heckel-Sarti \cite{BHS}. 
We refer to \cite{BHS} for the following properties of $Y$, 
which are extension of those for nodal cubic fourfolds \cite{Ha}. 

When $Y$ is general, it has an ordinary cusp point at $p_0=[0, \cdots, 0, 1]$ as the only singularity. 
For every point $p\in S\subset {\proj}^4$, the line $\overline{p_{0}p}$ is contained in $Y$. 
Conversely, every line on $Y$ passing through $p_0$ is of this form. 
Thus we have the embedding 
\begin{equation*}
S\hookrightarrow F(Y), \quad p\mapsto \overline{p_0p}, 
\end{equation*} 
whose image is the locus of lines through $p_0$. 
It turns out that this is the singular locus of $F(Y)$, 
and $F(Y)$ has transversal $A_2$-singularities along $S$.  

We assume that $S$ does not contain a line. 
Then we have the morphism 
\begin{equation}\label{eqn: S2 F(Y)}
S^{[2]}\to F(Y), \qquad p_1+p_2\mapsto l_{p_{1}p_{2}}, 
\end{equation}
where $l_{p_{1}p_{2}}$ is the residual line of 
$\overline{p_0p_1}+\overline{p_0p_2}$; 
recall that this is the line defined by $Y\cap P=\overline{p_0p_1}+\overline{p_0p_2}+l_{p_{1}p_{2}}$, 
where $P$ is the plane spanned by $p_0, p_1, p_2$. 
The morphism \eqref{eqn: S2 F(Y)} is the blow-up of $F(Y)$ along the singular locus $S$. 
This gives the symplectic resolution of $F(Y)$. 

The exceptional divisor $E=E_1+E_2$ of \eqref{eqn: S2 F(Y)} is 
a union of two ${\proj}^1$-bundles over $S$ intersecting at the respective distinguished section ($\simeq S$). 
Geometrically this is described as follows (cf.~\cite{Hu} \S 6.1.5). 
Firstly, the choice of a component, say $E_{\alpha}$, 
corresponds to the choice of a ruling on $Q$. 
Given a point $p$ of $S$, let $P$ be the plane spanned by $p_0$ and 
the line on $Q$ that passes through the image of $p$ and belongs to the chosen ruling. 
Then the fiber ${\proj}^1$ of $E_{\alpha}\to S$ over $p$ corresponds to 
the pencil of lines $l$ on $P$ passing through $p$, 
with the distinguished line $\overline{p_0p}$. 
The point of $S^{[2]}$ corresponding to $l$ is the remaining two intersection points of 
$l$ and the plane cubic $S\cap P$. 
Note that $S\cap P$ is a fiber of the elliptic fibration on $S$ 
obtained as the pullback of $Q\to {\proj}^1$.

Finally, we note that the cubic fourfold $Y$ is the cyclic cover of ${\proj}^4$ 
(with coordinates $x_0, \cdots, x_3, x_5$) 
branched over the nodal cubic threefold 
\begin{equation}\label{eqn: nodal cubic3}
F(x_0, \cdots, x_3) - x_5(x_0x_3-x_1x_2) = 0. 
\end{equation}
The construction in \S \ref{ssec: EK3} and \S \ref{ssec: ccc4} can be roughly summarized as 
\begin{equation*}
\xymatrix{
\textrm{genus 4 curves} \ar@{<.>}[r] \ar@{<.>}[d] & \textrm{Eisenstein K3 surfaces} \ar@{<.>}[d] \\ 
\textrm{nodal cubic threefolds} \ar@{<.>}[r] & \textrm{cuspidal cyclic cubic fourfolds}
}
\end{equation*}
Here the horizontal constructions are "taking cyclic cover of the ambient variety", 
and the vertical constructions are "taking the locus of lines through the singular point".

\subsection{$(2, 1)$-cycles}\label{ssec: (2,1) EK3}

Let $(S, G)$ be an Eisenstein $K3$ surface as in \eqref{eqn: EK3}. 
We recall the $(2, 1)$-cycle on $S$ constructed in \cite{Sa}. 
We need to choose two intersecting lines on $Q$ which are tangent to the branch curve $B$. 
It will be convenient to normalize these two lines to 
$R_1=(x_0=x_1=0)$ and $R_2=(x_0=x_2=0)$ in ${\proj}^3$. 
(Under the standard isomorphism $Q\simeq {\proj}^1\times {\proj}^1$, 
they correspond to $\{0 \}\times {\proj}^1$ and ${\proj}^1\times \{0 \}$.) 
Thus we assume that 
$B$ is tangent to both $R_1$ and $R_2$ with multiplicity $2$. 
A general canonical genus $4$ curve can be transformed to this form 
(with finitely many choices of the line pairs). 
We also assume that $B$ does not pass through $R_1\cap R_2 = [0, 0, 0, 1]$. 

Let $\pi \colon S\to Q$ be the projection. 
Then $C_i:= \pi^{-1}(R_i)$ is a cuspidal plane cubic curve on ${\proj}^4$. 
Explicitly, it is defined by 
\begin{equation}\label{eqn: Ci}
C_i \: \: : \: \: (x_0=x_i=0)  \: \cap \:  (x_4^3=F(x_0, \cdots, x_3)). 
\end{equation}
We label the intersection points $C_1 \cap C_2 = \pi^{-1}(R_1\cap R_2)$ by writing 
\begin{equation*}
C_1 \cap C_2 = \{ p_+, p_-, p_{\infty} \}. 
\end{equation*}
By our assumption $R_1\cap R_2\not\in B$, the points $p_+, p_-, p_{\infty}$ are distinct. 
Note also that $p_+, p_-, p_{\infty}$ are collinear: 
they lie on the line $x_0=x_1=x_2$. 
See Figure 1 for the configuration of $C_1$ and $C_2$.

\begin{figure}[h]\label{figure: C1C2}
\includegraphics[height=60mm, width=50mm]{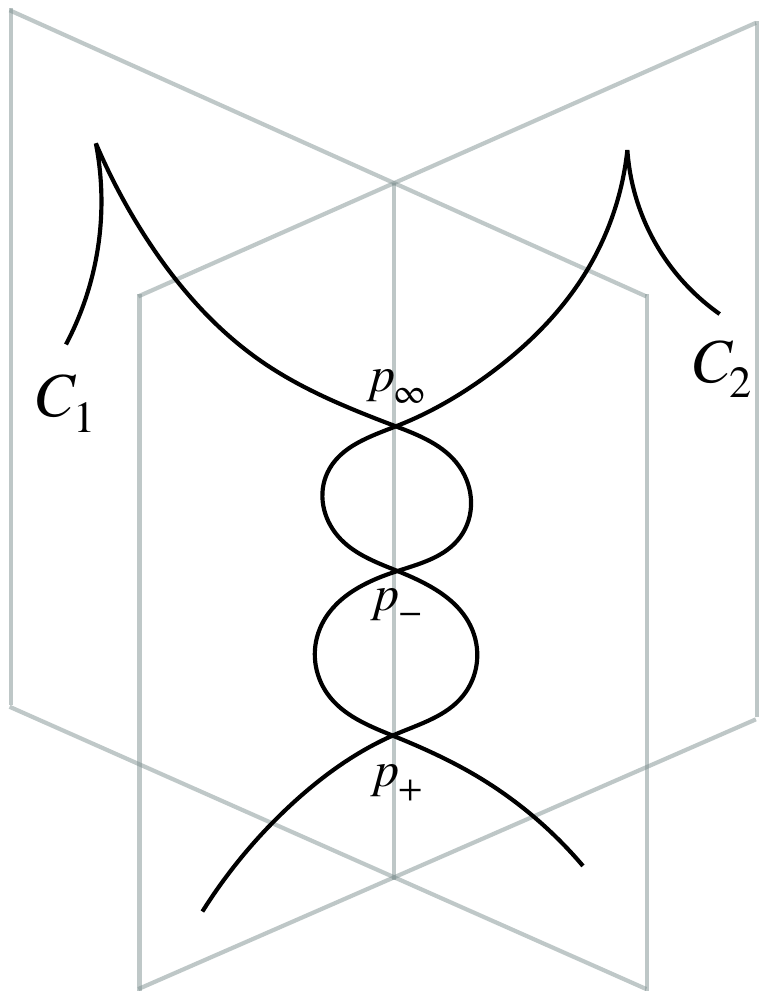}
\caption{}
\end{figure}

Since the normalization of $C_i$ is ${\proj}^1$, 
we can take a rational function $\phi_i$ on $C_i$ such that 
${\rm div}(\phi_1) = p_+ -p_-$ and ${\rm div}(\phi_2) = p_- -p_+$. 
Geometrically, $\phi_{i}$ is given by the projection from the cusp of $C_i$ 
on the plane $x_0=x_i=0$. 
It will be useful to normalize $\phi_{i}$ by setting $\phi_i(p_{\infty})=1$. 
Then 
\begin{equation*}
\xi = (C_1, \phi_1) +  (C_2, \phi_2) 
\end{equation*}
is a $(2, 1)$-cycle on $S$. 
It is proved in \cite{Sa} that the indecomposable part of $\xi$ is non-torsion for very general $F$. 
If we look back the proof, it actually says the following. 

\begin{theorem}[\cite{Sa}]\label{thm: EK3}
For very general $F$, 
the $G$-regulator of the $(2, 1)$-cycle $\xi$ on $S$ is non-torsion. 
\end{theorem}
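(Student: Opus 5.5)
The plan is to argue in families and reduce to a non-vanishing statement for a single holomorphic normal function. Let $U$ be the parameter space of cubic forms $F$ satisfying the normalizations of \S\ref{ssec: (2,1) EK3}: $B$ smooth, tangent to $R_1,R_2$, and $R_1\cap R_2\notin B$. Over $U$ we have the family of Eisenstein $K3$ surfaces $(S_F,G)$. The lattice $H^2(S_F,\Z)^G$ is locally constant, so the $G$-Jacobians $J^2(S_F,G)$ form a holomorphic family of generalized complex tori $\mathcal{J}\to U$. This is the point of using $\nu_G$ rather than $\nu_{{\rm tr}}$.

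The cycles $\xi_F=(C_1,\phi_1)+(C_2,\phi_2)$ are defined uniformly in $F$. Hence $F\mapsto \nu_G(\xi_F)$ is a holomorphic section $\sigma$ of $\mathcal{J}$: it is a horizontal normal function, by the standard theory of regulators of families of higher Chow cycles (cf.~\cite{KLM}). For each $N\geq 1$, the locus where $N\sigma=0$ is a closed analytic subset of $U$, and over a lattice translate there are only countably many such loci. So it suffices to show that no multiple $N\sigma$ vanishes identically on $U$; the very general statement then follows.

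To show that $\sigma$ is not torsion, I would use the explicit current formula for the regulator. Pairing with the (anti-)invariant eigenforms, we have $\langle \nu(\xi_F),\omega\rangle=\int_{\Gamma}\omega$ modulo periods, where $\omega$ is the holomorphic $2$-form and $\Gamma$ is a $2$-chain with $\partial\Gamma=\sum_i \phi_i^{-1}([-\infty,0])\subset C_1\cup C_2$. The latter is a closed $1$-cycle running from $p_+$ to $p_-$ on $C_1$ and back on $C_2$. Since $\omega$ spans the $\exp(2\pi i/3)$-eigenspace, this pairing detects exactly the image of $\nu(\xi_F)$ in $J^2(S,G)$, because $H^2(S,\Z)^G\otimes\C$ is orthogonal to $\omega$.

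The main step, and the one I expect to be the obstacle, is to produce a degeneration along which this pairing cannot remain a period. I would first try degenerating $F$ so that $B$ acquires $R_1\cap R_2$. Then $p_+$ and $p_-$ (and $p_\infty$) collide, $S$ acquires a singularity there, and the $1$-cycle $\partial\Gamma$ becomes a small loop around the collision point. Two outcomes would each suffice. Either the monodromy of $\sigma$ around this boundary divisor of $U$ differs from the monodromy of any lattice section, which shows $\sigma$ is not torsion. Or the regulator integral acquires a $\log$-type growth (computed in local coordinates near the vanishing cycle) that no combination of periods and torsion translates can match. If that degeneration is too singular to control, the fallback is to compute the infinitesimal invariant, i.e.\ the derivative of $\int_\Gamma\omega$ in a direction of $F$, at one explicit point and show it is not in the span of derivatives of periods of $\omega$. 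Either way, nonvanishing at one point or along one boundary behaviour yields non-torsionness for very general $F$.
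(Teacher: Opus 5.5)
\textbf{Verdict: genuine gap.} Your reduction is sound, but the non-torsion step, which is the whole content of the theorem, is missing, and the degeneration you propose cannot supply it.

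The first two paragraphs are fine. $H^2(S_F,\Z)^G$ is locally constant, $\nu_G(\xi_F)$ is a holomorphic section of the resulting family of $G$-Jacobians, and it suffices to show that no multiple of it vanishes identically. This is the same bookkeeping the paper uses in \S\ref{ssec: nontrivial (2,1)}.

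The paper does not reprove Theorem~\ref{thm: EK3}; it imports it from \cite{Sa}. There, as the paper indicates, the non-torsion input comes from a degeneration argument of the type in \S\ref{ssec: nontrivial (2,1)}: stability of $J^2(\cdot,G)$ lets one propagate non-torsionness from a special member where the value of $\nu_G$ can actually be identified. That input is exactly what your proposal lacks. You list outcomes that ``would suffice'' but carry none of them out. The fallback, an infinitesimal invariant at one explicit point, restates the problem rather than solving it.

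\textbf{Why the proposed degeneration fails.} Take a coordinate $s$ on the normalization $\proj^1$ of $C_i$ in which $G$ acts by $s\mapsto\zeta s$. Its fixed points lie over the two branch points $a_i,b_i$ of $C_i\to R_i$, where $B\cap R_i=2a_i+b_i$. The points $p_+,p_-,p_\infty$ form one $G$-orbit $\{\zeta^a r,\zeta^b r,\zeta^c r\}$. Hence $\phi_i(s)=h(s/r)$ for the fixed M\"obius function $h$ with zero $\zeta^a$, pole $\zeta^b$ and $h(\zeta^c)=1$.

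Forcing $B$ through $R_1\cap R_2$ collapses this orbit onto a $G$-fixed point, so $r\to 0$ or $r\to\infty$. Away from the collision point, $\phi_i$ then tends to the constant $h(\infty)$ or $h(0)$, both of which are sixth roots of unity. Meanwhile $\partial\Gamma$ shrinks to a small loop.

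\begin{itemize}
\item \emph{Generic case.} $B$ stays smooth at $R_1\cap R_2$, so $S$ does \emph{not} become singular. After the cubic base change that makes the labelling of $p_\pm,p_\infty$ single-valued, the family is smooth. $\xi_F$ specializes to the torsion decomposable cycle $(C_1,c_1)+(C_2,c_2)$, and $\nu_G(\xi_F)$ extends holomorphically with torsion value. There is no monodromy and no logarithmic term; this is exactly how a torsion section behaves, so the degeneration says nothing.
\item \emph{Nodal case.} If instead $B$ becomes nodal at $R_1\cap R_2$, then $S$ acquires an $A_2$ point. The local monodromy is finite, so again there is no log growth. Any non-torsion information would have to come from an explicit computation of the limit cycle on the resolved surface, including possible components on the exceptional curves. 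Nothing of that kind is in your proposal.
\end{itemize}

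\textbf{A smaller inaccuracy.} Pairing with $\omega$ does not ``detect exactly'' the image in $J^2(S,G)$. Here $H^2(S,\Z)^G$ has rank $2$, so $J^2(S,G)$ has dimension $19$. The class $\omega$ lies in, but does not span, the $10$-dimensional $\zeta$-eigenspace. Pairing with it is therefore one coordinate, valued in $\C$ modulo a dense group of periods. It is a legitimate sufficient criterion for non-torsion only in family form, as a multivalued function modulo $\Z$-periods. Even then it must be combined with an actual computation, such as a Picard--Fuchs argument or a nonvanishing infinitesimal invariant, which the proposal does not provide.
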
 

Recall from \S \ref{sec: K3 induce} that $\xi$ gives rise to a $(2, 1)$-cycle $\xi^{[2]}$ on $S^{[2]}$.  

\begin{corollary}\label{cor: EK3}
When $F$ is very general, 
the $G$-regulator of $\xi^{[2]}$ is non-torsion. 
In particular, for very general $F$, 
the indecomposable part of $\xi^{[2]}$ is non-torsion. 
\end{corollary}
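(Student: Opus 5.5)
This follows by combining Theorem \ref{thm: EK3} with Lemma \ref{prop: G-regulator (2,1)}, and then passing from the $G$-regulator to the transcendental regulator.

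First, we need that the $G$-action on $S$ is non-symplectic and of prime order. This holds since $G$ is generated by $x_4\mapsto \exp(2\pi i/3)x_4$, which has order $3$ and acts on $H^{2,0}(S)$ by a primitive cube root of unity. Hence Lemma \ref{prop: G-regulator (2,1)} applies to $(S, G)$. The induced action on $S^{[2]}$ is also non-symplectic, as noted before that lemma.

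For very general $F$, Theorem \ref{thm: EK3} gives that $\nu_G(\xi)\in J^2(S,G)$ is non-torsion. The lower horizontal map in Lemma \ref{prop: G-regulator (2,1)} is an isomorphism $J^2(S,G)\simeq J^2(S^{[2]},G)$, and it sends $\nu_G(\xi)$ to $\nu_G(\xi^{[2]})$. Therefore $\nu_G(\xi^{[2]})$ is non-torsion, which proves the first assertion.

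For the second assertion, we use that the projection $J^2(S^{[2]})\to J^2(S^{[2]})_{\rm tr}$ factors through $J^2(S^{[2]},G)$ (\S\ref{ssec: auto}). Non-torsionness of $\nu_G$ does not in general pass to $\nu_{\rm tr}$, since the map $J^2(S^{[2]},G)\to J^2(S^{[2]})_{\rm tr}$ may have a kernel. So we need the equality $H^2(S^{[2]},\Z)^G={\rm NS}(S^{[2]})$, in which case $\nu_G=\nu_{\rm tr}$.

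The following argument shows this equality holds for very general $F$, and it is the main point to check.
\begin{itemize}
\item Eisenstein $K3$ surfaces $(S,G)$ with fixed invariant lattice $H^2(S,\Z)^G$ form a family whose period domain is a complex ball. Away from countably many Heegner-type divisors in that ball, one has ${\rm NS}(S)=H^2(S,\Z)^G$.
\item Kond\=o's construction is dominant onto this moduli space, and the tangency normalization of $B$ still leaves a general genus $4$ curve. Hence very general $F$ avoids these countably many divisors.
\item Combining this with $H^2(S^{[2]},\Z)^G=H^2(S,\Z)^G\oplus\Z\delta$ and ${\rm NS}(S^{[2]})={\rm NS}(S)\oplus\Z\delta$ gives the required equality.
\end{itemize}

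Alternatively, one can avoid this step: Theorem \ref{thm: EK3} as proved in \cite{Sa} already yields that $\nu_{\rm tr}(\xi)$ is non-torsion for very general $F$, since that is how indecomposability is shown there. Corollary \ref{cor: induced (2,1)} then gives that $\nu_{\rm tr}(\xi^{[2]})$ is non-torsion.

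Either way, \S\ref{ssec: regulator} shows that the indecomposable part of $\xi^{[2]}$ is non-torsion.
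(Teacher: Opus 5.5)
Your proposal is correct and matches the paper's proof: the first assertion comes from Theorem~\ref{thm: EK3} together with Lemma~\ref{prop: G-regulator (2,1)}, and the second from the equality ${\rm NS}(S^{[2]})=H^2(S^{[2]},\Z)^G$, which holds whenever ${\rm NS}(S)=H^2(S,\Z)^G$, i.e.\ for very general $(S,G)$. The only difference is that you spell out the period-domain and Heegner-divisor justification of that last equality, which the paper leaves implicit.
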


\begin{proof}
The first assertion follows from 
Theorem \ref{thm: EK3} and Lemma \ref{prop: G-regulator (2,1)}. 
The second assertion follows from the first assertion and the fact that 
${\rm NS}(S^{[2]})=H^{2}(S^{[2]}, \Z)^{G}$ 
as far as ${\rm NS}(S)=H^{2}(S, \Z)^{G}$, i.e., for very general $(S, G)$. 
\end{proof}


\section{Deformation to smooth cubic fourfolds}\label{sec: deformation}

In this section, we construct $(2, 1)$- and $(4, 1)$-cycles on a family of Fano varieties of smooth cubic fourfolds with $\Z/3$-action. 
The proof of indecomposability will be given in the next \S \ref{sec: indecomposable}. 
It will turn out (in \S \ref{sec: indecomposable}) that 
our $(2, 1)$-cycles specialize to the induced cycles in Corollary \ref{cor: EK3} 
if we let the cubic fourfolds degenerate.

\subsection{A family of cubic fourfolds}\label{ssec: cub4 family}

We begin by setting up our family of cubic fourfolds. 
Let $U_0$ be the space of cubic forms $F(x_0, \cdots, x_3)$ in variables $x_0, \cdots, x_3$ 
which satisfy the assumptions in \S \ref{ssec: (2,1) EK3}, namely 
the curve cut out by $F$ on $Q\subset {\proj}^3$ is smooth, does not pass through $[0, 0, 0, 1]$, 
and is tangent to the lines $R_i=(x_0=x_i=0)$, $i=1, 2$, with multiplicity $2$. 
Let $U = U_0\times \mathbb{A}^1$. 
We use $t$ for expressing coordinates on $\mathbb{A}^1$. 
Then we define a family $Y\to U$ of cubic fourfolds by the equation 
\begin{equation}\label{eqn: cub4}
x_4^3 - F(x_0, \cdots, x_3) + x_5(x_0x_3-x_1x_2) + t x_0 x_5^2 \: = \: 0 
\end{equation}
in $U\times {\proj}^5$. 
We denote by $Y_{F,t}$ the fiber of $Y\to U$ over $(F, t)\in U$. 
Note that every $Y_{F,t}$ contains the point $p_0=[0, \cdots, 0, 1]$. 
When $(F, t)$ is general, $Y_{F,t}$ is smooth. 
The restriction of $Y\to U$ over $U_0$ is the family \eqref{eqn: ccc4} of cuspidal cyclic cubic fourfolds 
considered in \S \ref{ssec: ccc4}. 
In what follows, we freely shrink $U_0\subset U$ to suitable Zariski open sets (intersecting with $U_0$) 
without changing the notation $U_0\subset U$. 

The group $G=\Z/3$ acts on $Y\to U$ by $x_4\mapsto \exp(2\pi i/3) x_4$. 
Let $H_0\subset {\proj}^5$ be the hyperplane defined by $x_4=0$. 
This is ${\proj}^4$ with coordinates $x_0, \cdots, x_3, x_5$. 
The cubic fourfold $Y_{F,t}$ is the cyclic cover of $H_0$ 
branched over the cubic threefold 
\begin{equation*} 
W_{F,t} \: \: : \: \: F(x_0, \cdots, x_3) - x_5(x_0x_3-x_1x_2) - t x_0 x_5^2  \: = \: 0 
\end{equation*}
with Galois group $G$. 
When $(F, t)$ is general, $W_{F,t}$ is smooth. 
When $t=0$, $W_{F,0}$ is the nodal cubic threefold in \eqref{eqn: nodal cubic3}. 

\begin{lemma}\label{lem: general cubic3}
The rational map $(F, t)\mapsto W_{F,t}$ 
from $U$ to the moduli space of smooth cubic threefolds is dominant. 
\end{lemma}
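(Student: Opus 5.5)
Starting from a general smooth cubic threefold $W\subset{\proj}^4$, I will show that after a projective change of coordinates its equation can be put in the form $F(x_0,\dots,x_3)-x_5(x_0x_3-x_1x_2)-tx_0x_5^2=0$ with $t\ne 0$ and $F\in U_0$. Dominance of the rational map then follows, since the locus of such $W$ contains a nonempty Zariski open subset of the (irreducible) moduli space. A dimension count is not enough here: one must actually exhibit the normal form. (As a check, $U$ has dimension $\dim U_0+1 = (20-2)+1=19$ affine parameters, while smooth cubic threefolds have $10$ moduli after $\dim PGL_5=24$, so the fibres must be large.)

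The construction goes as follows. Choose a general point $p_0\in W$. Set $x_5$ to be the coordinate such that $p_0=[0,\dots,0,1]$, and expand
\begin{equation*}
W: \; a\,x_5^3+q_1x_5^2+q_2x_5+q_3=0,
\end{equation*}
with $q_k$ forms of degree $k$ in $x_0,\dots,x_3$. Since $p_0\in W$ we have $a=0$.

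\begin{enumerate}
\item The linear form $q_1$ defines the tangent hyperplane at $p_0$. Put $x_0:=q_1$ up to a scalar (absorbed later into $t$), so the $x_5^2$ term reads $-t\,x_0x_5^2$.
\item The quadric $q_2$ restricted to $x_0=0$ is a quadric in ${\proj}^2$. Modulo $x_0$, we may replace $x_5$ by $x_5+\ell(x_0,\dots,x_3)$, which changes $q_2$ by $2\ell q_1$ plus further terms. Hence $q_2$ is determined modulo $x_0\cdot(\text{linear})$, and the task is to arrange $q_2\equiv -(x_0x_3-x_1x_2)$ modulo these adjustments.
\item More cleanly, I will pick $x_5\mapsto x_5+\ell$ so that $q_2$ has the form $x_0x_3-x_1x_2$ after choosing $x_1,x_2,x_3$ suitably. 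A general quadric in ${\proj}^3$ containing the conditions above is projectively equivalent to the smooth quadric $x_0x_3-x_1x_2$ with the hyperplane $x_0=0$ tangent to it: $x_0=0$ meets $Q$ in the two lines $x_0=x_1=0$ and $x_0=x_2=0$. The shift by $\ell$ (four free coefficients) will be used to make the plane $x_0=0$ tangent to $q_2$; I still need to verify that tangency can be achieved this way, by computing how $\ell$ changes the restriction of $q_2$ to $\{x_0=0\}$ via the cross term with $q_1$. Once $Q=\{q_2=0\}$ is smooth and $\{x_0=0\}$ is a tangent plane, the standard form $x_0x_3-x_1x_2$ follows by a linear change of $x_1,x_2,x_3$.
\item With this form, $F:=q_3$. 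The tangency of $B=Q\cap\{F=0\}$ to $R_1$ and $R_2$ is the condition defining $U_0$, and it has to be produced by the choice of $p_0$. Restricting $W$ to the plane $\{x_0=x_1=0\}$ (the cone $x_0=x_1=0$ through $p_0$) gives $x_5\cdot 0+F|_{R_1}$, so $W$ contains the cone over $R_1\cap B$. The required tangency means $F|_{R_1}$ has a double root.
\end{enumerate}

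I expect step 4 to be the main obstacle. The plan there is to use the freedom in $p_0$, a 3-dimensional family, together with the choice of ruling, and to impose the two double-root conditions. Each is a single codimension-one condition, so they should cut a nonempty 1-dimensional locus of $p_0$. Alternatively, one can change the parametrization: $F$ is only defined modulo $Q$ and modulo the remaining affine changes, and one can check that the two tangency conditions are independent on an explicit example. Finally I will confirm that the map is generically finite onto a 10-dimensional image using a Jacobian rank computation at one explicit point, for instance a Fermat-type $F$. Openness then gives dominance.
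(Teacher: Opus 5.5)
Step 3 cannot be carried out, and this breaks the normal-form strategy. Any coordinate change fixing $p_0=[0,\dots,0,1]$ acts by some $A\in GL_4$ on $(x_0,\dots,x_3)$ and by $x_5\mapsto \alpha x_5+\ell(x_0,\dots,x_3)$. As you note yourself in step 2, the shift changes $q_2$ only by $2\ell q_1=2\ell x_0$. So the conic $\{x_0=q_2=0\}$ in the plane $x_0=0$ is an invariant of the pair $(W,p_0)$; it is the tangent cone at $p_0$ of the tangent hyperplane section $W\cap T_{p_0}W$. No choice of $\ell$ can make the plane $x_0=0$ tangent to $\{q_2=0\}$.

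In the target form this conic is $x_1x_2=0$, a line pair: $W\cap T_{p_0}W$ has an $A_2$ point at $p_0$ rather than a node. At a general point of a smooth cubic threefold the conic is smooth, so the normal form is never reached from a general $p_0$. Admissible points lie on the parabolic surface $W\cap \mathrm{Hess}(W)$. Your count in step 4 is therefore off. There are three conditions on $p_0$: the conic degenerates, and $F$ has a double root on each of $R_1$ and $R_2$. So on a general $W$ one expects only finitely many admissible $p_0$, not a curve. The whole content of the lemma is that this finite set is nonempty and meets the open conditions ($B$ smooth, $[0,0,0,1]\notin B$, $t\neq 0$). Arguing that each condition has codimension one and so they ``should cut a nonempty locus'' is exactly the dimension count you said was insufficient.

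Your closing Jacobian-rank computation could settle the lemma on its own, but only if actually performed at a genuine point of $U$. What you would need is rank $10$ for the induced map to moduli, i.e.\ surjectivity of the differential of $U\times GL_5\to\{\text{cubic forms}\}$ at one point. As written it is only announced. The map $U\to\mathcal{M}_{{\rm cub}3}$ is also not generically finite: its generic fibre has dimension $19-10=9$. And the Fermat cubic does not lie in $U_0$, since $F|_{R_i}$ then has three distinct roots.

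The paper avoids the enumerative problem altogether by degenerating. $U$ maps to the partial compactification $\mathcal{M}_{{\rm cub}3}'$ obtained by adding one-nodal cubic threefolds, and at $t=0$ the threefold $W_{F,0}$ is the nodal cubic attached to the genus $4$ curve $B$. A general canonical genus $4$ curve has a tangent line in each ruling of $Q$, and such a pair can be moved to $R_1,R_2$, so $U_0$ dominates the boundary divisor. Since $U$ is irreducible and is not mapped into the boundary, the closure of its image strictly contains an irreducible divisor of $\mathcal{M}_{{\rm cub}3}'$. It is therefore all of $\mathcal{M}_{{\rm cub}3}'$.
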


\begin{proof}
Let $\mathcal{M}_{{\rm cub}3}$ be the moduli space of smooth cubic threefolds and 
$\mathcal{M}_{{\rm cub}3}'$ be the partial compactification of $\mathcal{M}_{{\rm cub}3}$ 
obtained by adding one-nodal cubics (cf.~\cite{Al}, \cite{Yo}). 
The morphism $U-U_0 \to \mathcal{M}_{{\rm cub}3}$ extends to 
$U\to \mathcal{M}_{{\rm cub}3}'$. 
As explained in the last paragraph of \S \ref{ssec: ccc4}, 
the boundary morphism 
$U_0\to \mathcal{M}_{{\rm cub}3}'-\mathcal{M}_{{\rm cub}3}$ 
is dominant. 
Since $\mathcal{M}_{{\rm cub}3}'-\mathcal{M}_{{\rm cub}3}$ is of codimension $1$ in $\mathcal{M}_{{\rm cub}3}'$, 
we find that $U\to \mathcal{M}_{{\rm cub}3}'$ is dominant. 
\end{proof}

\begin{corollary}\label{cor: G-inv=NS}
We have 
$H^2(F(Y_{F,t}), \Z)^{G} = \Z \mathcal{O}(1)$ 
where $\mathcal{O}(1)$ is the Pl\"ucker line bundle on $F(Y_{F,t})\subset \mathbb{G}(1, 5)$. 
When $(F, t)$ is very general, this coincides with ${\rm NS}(F(Y_{F,t}))$. 
\end{corollary}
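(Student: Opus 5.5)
The plan is to reduce the computation of $H^2(F(Y_{F,t}),\Z)^{G}$ to the known structure of the $G$-action for cyclic cubic fourfolds, via the Abel--Jacobi (Beauville--Donagi) isomorphism. For a smooth cubic fourfold $Y$, the Fano correspondence $\alpha\colon H^4(Y,\Z)\to H^2(F(Y),\Z)$ is $G$-equivariant, because $G$ acts on $Y$ and hence on $F(Y)$ compatibly with the incidence correspondence. It restricts to an isomorphism of Hodge structures $H^4(Y,\Z)_{\rm prim}\simeq H^2(F(Y),\Z)_{\rm prim}$, and it sends $h^2$ to the Pl\"ucker class $g=[\mathcal{O}(1)]$. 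Rationally we then have $H^2(F(Y),\Q)=\Q g\oplus H^2(F(Y),\Q)_{\rm prim}$, and $g$ is $G$-invariant since $G$ acts through a linear automorphism of $\proj^5$. It therefore suffices to show that $G$ has no nonzero invariants on $H^4(Y,\Q)_{\rm prim}$. Saturation will then give $H^2(F(Y),\Z)^G=\Z g$, because $g$ is primitive in $H^2(F(Y),\Z)$ (its BBF square is $6$ and its divisibility is $2$, and it is not a multiple of another integral class).

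To see that $H^4(Y,\Q)^G_{\rm prim}=0$ for the cyclic cover $Y\to H_0\simeq\proj^4$ branched along $W=W_{F,t}$, I will use the standard fact that the $G$-invariant part of the cohomology of a cyclic cover is the cohomology of the quotient:
\[
H^4(Y,\Q)^G\simeq H^4(\proj^4,\Q)=\Q .
\]
This line is spanned by $h^2$, so the primitive invariant part vanishes. Alternatively, in the Griffiths residue description, the primitive cohomology is spanned by residues of $P\,\Omega/f^{k}$, where $f=x_4^3-W$ and $\Omega$ carries the factor $dx_4$. The Jacobian ring is $\C[x_4]/(x_4^2)\otimes \mathrm{Jac}(W)$, and every monomial there has $x_4$-exponent $0$ or $1$. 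Combined with the factor $x_4^0\,dx_4$, no class is invariant under $x_4\mapsto \omega x_4$, so the primitive part splits into the $\omega$- and $\omega^2$-eigenspaces only. This is the result used in \cite{BCS}, and I can simply cite it.

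For the second statement, I will invoke Lemma \ref{lem: general cubic3}: the family $F(Y_{F,t})$ dominates the moduli of pairs $(F(Y),G)$ with $Y$ a cyclic cubic fourfold. This moduli space has dimension $10$, the same as that of cubic threefolds, and the corresponding period domain is a $10$-dimensional complex ball attached to the $\omega$-eigenspace of $H^2_{\rm prim}$ (rank $22=2\cdot 11$ over $\Z$). The Noether--Lefschetz loci, namely the loci where some class $\lambda\in H^2_{\rm prim}$ is of type $(1,1)$, are countably many proper analytic subvarieties of the ball; each is a sub-ball, cut out by the orthogonality of the period to the $\omega$-eigencomponent of $\lambda$. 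The period map is dominant by the Allcock--Carlson--Toledo theorem, or simply because it is a local isomorphism (local Torelli for the pair). Hence for very general $(F,t)$ no primitive class is algebraic, and ${\rm NS}=\Z g=H^2(F(Y),\Z)^G$. The main obstacle is verifying that these loci are proper, i.e.\ that some period is not orthogonal to a given $\lambda$. This follows because the $\omega$-eigenspace lines fill an open subset of the ball and $\lambda\neq 0$.
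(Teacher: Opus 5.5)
Your argument is correct, but it takes a more self-contained route than the paper. The paper proves nothing directly here. It quotes \cite{BCS}, where both assertions are established for Fano varieties of cubic fourfolds that are triple covers of $\proj^4$ branched over smooth cubic threefolds: the invariant lattice is spanned by the Pl\"ucker class, and it equals ${\rm NS}$ for very general members. The paper's only new input is Lemma \ref{lem: general cubic3}, which ensures that a very general $(F,t)$ gives a very general such threefold.

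You instead prove the first assertion from scratch. Your chain is:
\begin{itemize}
\item $G$-equivariance of the Beauville--Donagi map;
\item $H^4(Y,\Q)^G\simeq H^4(Y/G,\Q)=H^4(\proj^4,\Q)=\Q h^2$, hence $H^2(F(Y),\Q)^G=\Q g$;
\item saturation, using that $q(g)=6$ is squarefree, so $g$ is primitive.
\end{itemize}
This shows transparently why the invariant lattice has rank one: the quotient by $G$ is $\proj^4$. Given Beauville--Donagi, this part is essentially elementary. Your residue computation is a valid independent check. The relevant fact there is that $\Omega$ is multiplied by $\det=\omega$, not that it "carries $dx_4$", but the conclusion that only the $\omega$- and $\omega^2$-eigenspaces occur is right.

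For the second assertion you unpack what lies behind \cite{BCS}:
\begin{itemize}
\item the ball period domain in the $\omega$-eigenspace of $g^\perp$;
\item Noether--Lefschetz loci as hyperplane sections of the ball, which are proper because a nonzero integral $\lambda\in g^\perp$ has $\lambda_\omega=\overline{\lambda_{\bar\omega}}\neq 0$ when $G$ has no invariants on $g^\perp$;
\item dominance of the period map via Lemma \ref{lem: general cubic3} together with Allcock--Carlson--Toledo or local Torelli.
\end{itemize}
You implicitly use the standard reduction that any class of ${\rm NS}$ outside $\Q g$ yields a nonzero integral $(1,1)$-class in $g^\perp$; that is fine. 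This half is not really more elementary than the citation, since Allcock--Carlson--Toledo or local Torelli is an input of comparable weight. Its value is that it isolates exactly what is needed, namely the dominance statement of Lemma \ref{lem: general cubic3}, which is also precisely how the paper uses that lemma.
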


\begin{proof}
This is proved in \cite{BCS} for smooth cubic fourfolds obtained as 
cyclic covers of ${\proj}^4$ branched over smooth cubic threefolds. 
\end{proof}

\subsection{Two cones}\label{ssec: cycle family}

A key point in our construction is the presence of two cones in $Y_{F,t}$. 
We begin by explaining this. 
In what follows, we fix $(F, t)$ and write $Y=Y_{F,t}$. 
We define cuspidal plane cubic curves in ${\proj}^5$ by  
\begin{equation*}
C_i \: \; : \: \; (x_0=x_i=x_5=0) \: \cap \: (x_4^3=F(x_0, \cdots, x_3)), \quad i=1, 2. 
\end{equation*}
These are the same curves as \eqref{eqn: Ci}, and do not depend on $t$.  
Clearly the cubic fourfold $Y$ contains $C_1$, $C_2$. 
Let $\hat{C}_{i}$ be the cone over $C_i$ with vertex $p_0=[0, \cdots, 0, 1]$. 

\begin{lemma}\label{lem: cone in Yt}
We have $\hat{C}_{1}, \hat{C}_{2}\subset Y$. 
\end{lemma}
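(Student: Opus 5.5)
The plan is to verify directly that every point of the cone lies on $Y$, by substituting a parametrization of $\hat{C}_i$ into the defining equation \eqref{eqn: cub4}. A point of $\hat{C}_i$ other than the vertex has the form $\lambda q + \mu p_0$. Here $q=[q_0,\dots,q_5]\in C_i$ and $p_0=[0,\dots,0,1]$.

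First I record the coordinates of such a point. Since $q$ satisfies $x_0=x_i=x_5=0$ and $p_0$ has only its last coordinate nonzero, the point $\lambda q+\mu p_0$ has
\begin{equation*}
x_0=x_i=0, \qquad x_5=\mu, \qquad x_j=\lambda q_j \ (j\ne 0,i,5).
\end{equation*}
In particular the condition $x_0=x_i=0$ holds on the whole cone. Only $x_5$ is freed.

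Next I substitute these coordinates into \eqref{eqn: cub4} term by term.
\begin{enumerate}
\item The term $x_5(x_0x_3-x_1x_2)$ vanishes. Indeed $x_0=0$ kills $x_0x_3$, and $x_i=0$ (with $i\in\{1,2\}$) kills $x_1x_2$.
\item The term $t x_0 x_5^2$ vanishes because $x_0=0$.
\item The remaining part $x_4^3-F(x_0,\dots,x_3)$ involves only $x_0,\dots,x_4$. These coordinates equal $\lambda$ times those of $q$, so the expression is homogeneous of degree $3$ in them and equals $\lambda^3\bigl(q_4^3-F(q_0,\dots,q_3)\bigr)$. This is $0$ since $q\in C_i$.
\end{enumerate}
Hence every point of the cone satisfies the equation. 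The vertex $p_0$ lies on $Y$ as already noted, so $\hat{C}_i\subset Y$ for $i=1,2$ and every $t$.

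There is no real obstacle in this argument. The one point to check is that the correction terms $x_5(x_0x_3-x_1x_2)$ and $t x_0x_5^2$ are annihilated on the linear span $\{x_0=x_i=0\}$ of the cone. This is exactly why $C_i$ is chosen to lie over the lines $R_i=(x_0=x_i=0)$ on $Q$, and why the deformation term involves $x_0$.
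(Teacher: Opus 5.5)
Your proposal is correct and is essentially the paper's argument. The paper also writes the points of $\overline{p_0p}$ for $p\in C_1$ as $[0,0,a_2,a_3,a_4,\lambda]$ and observes that $x_0=x_1=0$ kills the two extra terms, while $a_4^3=F(0,0,a_2,a_3)$ kills the rest; you simply make the homogeneity step and the vertex case explicit.
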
 

\begin{proof}
What has to be checked is $\overline{p_0p}\subset Y$ for any $p\in C_1$. 
(The case of $C_2$ is similar.) 
By the equation of $C_1$, a point $p$ of $C_1$ has coordinates 
$[0, 0, a_2, a_3, a_4, 0]$ where $a_2, a_3, a_4$ satisfy $a_4^3=F(0, 0, a_2, a_3)$. 
Points on the line $\overline{p_0p}$ (different from $p_0$) have coordinates 
$[0, 0, a_2, a_3, a_4, \lambda]$ where $\lambda\in \mathbb{A}^1$.  
Since the $x_0$-coordinate and the $x_1$-coordinate are $0$, 
this point satisfies the equation \eqref{eqn: cub4} of $Y$. 
\end{proof}

By Lemma \ref{lem: cone in Yt}, we obtain the embedding 
\begin{equation}\label{eqn: Ci in F(Y) naive}
C_i \hookrightarrow F(Y), \quad p\mapsto \overline{p_0p}. 
\end{equation}
We denote by $\Gamma_{i}\subset F(Y)$ its image. 

\begin{lemma}\label{lem: C(t)}
Let $t\ne 0$. 
Then $\Gamma_{1} + \Gamma_{2}$ coincides with the locus of lines passing through $p_0$. 
\end{lemma}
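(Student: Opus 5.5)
We compute directly which lines through $p_0=[0,\dots,0,1]$ lie on $Y=Y_{F,t}$ with $t\neq 0$. A line through $p_0$ is $\overline{p_0 q}$ with $q=[a_0,a_1,a_2,a_3,a_4,0]$; the point $q$ can be taken in the hyperplane $x_5=0$, since any line through $p_0$ meets it in exactly one point. Points of the line other than $p_0$ are $[a_0,\dots,a_4,\lambda]$. Substituting into \eqref{eqn: cub4} gives a polynomial in $\lambda$ of degree at most $2$:
\begin{equation*}
\bigl(a_4^3 - F(a_0,\dots,a_3)\bigr) + \lambda\,(a_0a_3-a_1a_2) + \lambda^2\, t a_0 .
\end{equation*}
The line lies in $Y$ if and only if all three coefficients vanish.

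Since $t\ne 0$, the vanishing of the $\lambda^2$-coefficient gives $a_0=0$. The $\lambda$-coefficient then becomes $-a_1a_2$, so $a_1=0$ or $a_2=0$. The constant term gives $a_4^3=F(0,a_1,a_2,a_3)$. Therefore $q$ satisfies $x_0=x_i=x_5=0$ and $x_4^3=F$ for $i=1$ or $i=2$, that is, $q\in C_1\cup C_2$. Conversely, Lemma \ref{lem: cone in Yt} shows that every $q\in C_1\cup C_2$ gives a line in $Y$. Set-theoretically, then, the locus of lines through $p_0$ is $\Gamma_1\cup\Gamma_2$.

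The remaining issue is scheme-theoretic: the statement writes $\Gamma_1+\Gamma_2$, so we also need the locus to be reduced, with no embedded or nonreduced structure. The locus of lines through $p_0$ in $F(Y)$ is identified with the subscheme of $\proj^4=\{x_5=0\}$ (coordinates $x_0,\dots,x_4$) cut out by the three coefficient equations $x_0=0$, $x_0x_3-x_1x_2=0$, $x_4^3-F=0$. These reduce to $x_0=0$, $x_1x_2=0$, $x_4^3=F(0,x_1,x_2,x_3)$. This is the complete intersection, inside the plane-pair $\{x_0=x_1x_2=0\}\subset\proj^4$, of a reduced union of two planes with a cubic. Its components are the two reduced cuspidal cubics $C_1$ and $C_2$. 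They meet transversally-in-the-plane-pair at the three points $p_+,p_-,p_\infty$, which are distinct by the assumption that $B$ avoids $[0,0,0,1]$. A complete intersection is Cohen--Macaulay, so it has no embedded points, and it is generically reduced along each $C_i$. Hence it is the reduced curve $C_1\cup C_2$, which gives the equality $\Gamma_1+\Gamma_2$ as cycles.

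The main obstacle is justifying the identification of the scheme of lines through $p_0$ with this subscheme of $\proj^4$. I would handle it via the standard description: the lines through $p_0$ in a cubic with $p_0\in Y$ are cut out in the $\proj^4$ of directions by the Taylor coefficients of the equation at $p_0$, here the quadric, linear, and constant parts in $\lambda$. Because $t\ne0$, the tangent hyperplane $x_0=0$ at $p_0$ is nonzero, so $p_0$ is a smooth point and this description holds in the usual form.
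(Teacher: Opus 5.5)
Your argument is correct and is the same as the paper's: you substitute $[a_0,\dots,a_4,\lambda]$ into the equation of $Y$, use $t\ne 0$ to force $a_0=0$, and then read off $a_1a_2=0$ and $a_4^3=F$, which puts the point on $C_1$ or $C_2$. Your extra scheme-theoretic check is also correct: the locus is a complete intersection in the $\proj^4$ of directions, hence Cohen--Macaulay, and it is generically reduced along each $C_i$. It goes beyond what the paper proves, though, since there $\Gamma_1+\Gamma_2$ just denotes the set-theoretic union $\Gamma_1\cup\Gamma_2$.
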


\begin{proof}
Let $l$ be a line on $Y$ passing through $p_0$, 
and $p=[a_0, \cdots, a_4, 0]$ be the intersection point of $l$ with the hyperplane $x_5=0$. 
Points on $l$ different from $p_0$ have coordinates 
$[a_0, \cdots, a_4, \lambda]$ with $\lambda\in \mathbb{A}^1$. 
Since $l\subset Y$, we have 
\begin{equation*}
(a_4^3-F(a_0, \cdots, a_3)) + (a_0a_3-a_1a_2)\lambda + ta_0\lambda^2 \equiv 0 
\end{equation*}
for any $\lambda$ by the equation \eqref{eqn: cub4} of $Y$. 
Looking this as a polynomial of $\lambda$ and noticing $t\ne 0$, we see that 
\begin{equation*}
a_0=0, \quad a_1a_2=0, \quad a_4^3=F(a_0, \cdots, a_3). 
\end{equation*}
This means $p\in C_1$ or $p\in C_2$ depending on whether $a_1=0$ or $a_2=0$. 
Therefore $l=\overline{p_0p}$ belongs to $\Gamma_1+\Gamma_2$. 
\end{proof}
 
For two points $p_1, p_2\in C_i$, 
the lines $\overline{p_0p_1}$, $\overline{p_0p_2}$ are contained in $Y$ and intersect at $p_0$. 
This determines the residual line, which we denote by $l_{p_1p_2}$. 
Recall that this is defined by 
$Y\cap P = \overline{p_0p_1} + \overline{p_0p_2} + l_{p_1p_2}$ 
where $P$ is the plane spanned by $p_0, p_1, p_2$. 
Note that $l_{p_1p_2}$ is defined even when $p_1=p_2$: 
in that case, $P$ is the tangent plane of the cone $\hat{C}_{i}$ along $\overline{p_0p_1}=\overline{p_0p_2}$.  

\begin{lemma}\label{lem: lp1p2}
For $p_1, p_2\in C_i$, we have $l_{p_1p_2}=\overline{p_0p_3}$ for the third intersection point $p_3$ 
of $\overline{p_1p_2}$ with $C_i$. 
In particular, $l_{p_1p_2}$ belongs to $\Gamma_{i}$. 
\end{lemma}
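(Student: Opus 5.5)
The plan is to identify the plane $P$ explicitly and show that $Y\cap P$ already contains the three lines $\overline{p_0p_1}$, $\overline{p_0p_2}$, $\overline{p_0p_3}$. Fix $i=1$; the case $i=2$ is symmetric. Write $\Pi_1=(x_0=x_1=x_5=0)\simeq{\proj}^2$, the plane containing $C_1$. The cone $\hat{C}_1$ lies in the $3$-space $\Lambda_1=(x_0=x_1=0)$, which is the join of $p_0$ and $\Pi_1$. The key observation is the one in Lemma \ref{lem: cone in Yt}: on $\Lambda_1$ the equation \eqref{eqn: cub4} restricts to $x_4^3-F(0,0,x_2,x_3)$, because every other term contains $x_0$ or $x_1$. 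Hence $Y\cap\Lambda_1$ is the cubic surface defined by $x_4^3=F(0,0,x_2,x_3)$ in $\Lambda_1$. This is exactly the cone $\hat{C}_1$, since its equation does not involve $x_5$. In particular $Y\cap\Lambda_1=\hat{C}_1$ as schemes.

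Suppose first that $p_1\neq p_2$. Then $P$ is spanned by $p_0,p_1,p_2$, all of which lie in $\Lambda_1$, so $P\subset\Lambda_1$ and $P\cap\Pi_1=\overline{p_1p_2}$. Consequently $Y\cap P=\hat{C}_1\cap P$ is the cone with vertex $p_0$ over the divisor $C_1\cap\overline{p_1p_2}=p_1+p_2+p_3$ on the line $\overline{p_1p_2}$. Here $p_3$ is the third intersection point, counted with multiplicity; the divisor has degree $3$ by B\'ezout in $\Pi_1$. Therefore $Y\cap P=\overline{p_0p_1}+\overline{p_0p_2}+\overline{p_0p_3}$, and $l_{p_1p_2}=\overline{p_0p_3}$. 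There is one degenerate situation: $\overline{p_1p_2}$ could be contained in $C_1$. This cannot happen, because $C_1$ is an irreducible cuspidal cubic.

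Next suppose $p_1=p_2$. Then $P$ is the tangent plane of $\hat{C}_1$ along $\overline{p_0p_1}$, which is the span of $p_0$ and the tangent line $T_{p_1}C_1\subset\Pi_1$ (interpreted suitably if $p_1$ is the cusp). This plane again lies in $\Lambda_1$, and $P\cap\Pi_1=T_{p_1}C_1$. The same computation gives $C_1\cap T_{p_1}C_1=2p_1+p_3$, or $3p_1$ at a flex or at the cusp. Hence $l_{p_1p_1}=\overline{p_0p_3}$, with $p_3$ the tangential point. Finally, $\overline{p_0p_3}$ lies in $\Gamma_1$ by the definition of $\Gamma_1$ as the image of \eqref{eqn: Ci in F(Y) naive}.

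The only point requiring care is the tangent case, especially when $p_1$ is the cusp. There the phrase ``the tangent plane of the cone'' must be read as the limit of the planes spanned by $p_0,p_1,p_2$ as $p_2\to p_1$. That limit is the span of $p_0$ and the limiting secant line in $\Pi_1$, which is the cuspidal tangent. One could also handle this case directly by continuity of residual lines. Everything else is the single observation that $Y\cap\Lambda_1=\hat{C}_1$.
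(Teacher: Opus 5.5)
Your argument is correct and rests on the same idea as the paper's proof: collinearity of $p_1,p_2,p_3$ puts $\overline{p_0p_3}$ in the plane $P$, and the cone $\hat{C}_1$ lies in $Y$. Your sharpening, that $Y\cap\Lambda_1=\hat{C}_1$ scheme-theoretically, lets you compute $Y\cap P=\overline{p_0p_1}+\overline{p_0p_2}+\overline{p_0p_3}$ exactly; this also handles the cases where $p_3$ coincides with $p_1$ or $p_2$ (tangent secants, the cusp) and shows $P\not\subset Y$, which the paper's bare containment $\overline{p_0p_3}\subset P\cap Y$ does not address.
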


\begin{proof}
We consider only the case $i=1$. 
Since $C_1$ is a cubic curve on the plane $x_0=x_1=x_5=0$, 
the line $\overline{p_1p_2}$ intersects with $C_1$ at a third point: this is $p_3$. 
Since $p_1, p_2, p_3$ are collinear, 
the line $\overline{p_0p_3}$ is contained in the plane $P$ spanned by $p_0, p_1, p_2$. 
On the other hand, since $p_3\in C_1$, we have $\overline{p_0p_3}\subset Y$ by Lemma \ref{lem: cone in Yt}. 
Thus $\overline{p_0p_3}\subset P\cap Y$. 
By the definition of $l_{p_1p_2}$, 
this means $l_{p_1p_2}=\overline{p_0p_3}$. 
\end{proof}

As in \S \ref{ssec: (2,1) EK3}, we label the intersection points of $C_1$ and $C_2$ by writing 
\begin{equation*}\label{eqn: C1C2 label}
C_1\cap C_2 = \{ p_+, p_-, p_{\infty}\}. 
\end{equation*} 
Then we have 
\begin{equation*}
\Gamma_{1} \cap \Gamma_{2} = \{ \overline{p_{0}p_{+}}, \: \overline{p_{0}p_{-}}, \: \overline{p_{0}p_{\infty}} \}. 
\end{equation*}

\begin{remark}
Strictly speaking, to define this labeling globally requires to take an etale cover of $U$. 
This is presumed in what follows (without changing the notation $U$). 
\end{remark}


\subsection{$(4, 1)$-cycles}\label{ssec: (4,1)}

Now we construct a $(4, 1)$-cycle on $F(Y)=F(Y_{F,t})$ with $t\ne 0$. 
Let $\phi_i$ be the rational function on $C_i$ taken in \S \ref{ssec: (2,1) EK3}. 
Recall that this satisfies ${\rm div}(\phi_1)=p_+-p_-$ and ${\rm div}(\phi_2)=p_{-}-p_{+}$, 
and is normalized by setting the value at $p_{\infty}$ to be $1$. 
We regard $\phi_i$ as a rational function on $\Gamma_{i}$ via the isomorphism $C_i\to \Gamma_{i}$, 
and denote it again by $\phi_i$. 
Then 
\begin{equation*}
\xi_4 := (\Gamma_1, \phi_1) + (\Gamma_2, \phi_2) 
\end{equation*}
satisfies the cocycle condition and hence defines a $(4, 1)$-cycle on $F(Y)$. 

\begin{theorem}\label{thm: (4,1)}
When $(F, t)$ is very general, 
the $(4, 1)$-cycle $\xi_4$ on $F(Y_{F,t})$ has non-torsion transcendental regulator.   
In particular, its indecomposable part is non-torsion. 
\end{theorem}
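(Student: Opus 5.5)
The intro already indicates the strategy: the non-torsionness of $\nu_{\rm tr}(\xi_4)$ is to be deduced from that of the $(2,1)$-cycle $\xi_2$ (Theorem~\ref{thm: intro}, proved in \S\ref{sec: indecomposable}) by using the Fano correspondence twice. I would set this up as follows. Let $P \subset F(Y)\times Y$ be the universal line, with projections $q\colon P\to F(Y)$ and $r\colon P\to Y$. The correspondence $r_\ast q^\ast$ sends $(4,1)$-cycles on $F(Y)$ to $(3,1)$-cycles on $Y$, and $q_\ast r^\ast$ sends $(3,1)$-cycles on $Y$ to $(2,1)$-cycles on $F(Y)$. At the level of cohomology these correspondences induce $H^6(F(Y))\to H^4(Y)\to H^2(F(Y))$. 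They are compatible with the regulator maps (\cite{We} \S 3) and with the Hodge-class subtori, so they descend to maps of transcendental Jacobians
\[
J^6(F(Y))_{\rm tr}\to J^4(Y)_{\rm tr}\to J^2(F(Y))_{\rm tr}.
\]

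\textbf{Step 1.} Compute $r_\ast q^\ast \xi_4$. Since $\Gamma_i\subset F(Y)$ parametrizes the lines $\overline{p_0 p}$ with $p\in C_i$, we have $r(q^{-1}\Gamma_i)=\hat C_i$, and this map is birational. Hence $r_\ast q^\ast\xi_4=(\hat C_1,\hat\phi_1)+(\hat C_2,\hat\phi_2)=:\eta$, where $\hat\phi_i$ is the pullback of $\phi_i$ under the projection from $p_0$.

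\textbf{Step 2.} Apply the correspondence again: $q_\ast r^\ast\eta$ is the cycle supported on the divisors $D_i$ of lines meeting $\hat C_i$, carrying the induced functions. This should be, up to sign and decomposable terms, exactly the $(2,1)$-cycle $\xi_2$ of \S\ref{ssec: (2,1)}, which is defined from the divisors of lines meeting the cones. The thing to verify is that the pullback $r^\ast$ can be taken naively. This requires $\hat C_i$ and ${\rm div}(\hat\phi_i)$, namely the lines $\overline{p_0p_\pm}$, to meet the degeneracy locus properly. If they do not, the correction terms are supported on surfaces such as the loci of lines through $p_0$ or the lines $\overline{p_0p_\pm}$ with constant functions; these are decomposable and do not affect $\nu_{\rm tr}$.

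\textbf{Step 3.} Conclude. We get
\[
\nu_{\rm tr}(\xi_2)=\Phi\bigl(\nu_{\rm tr}(\xi_4)\bigr)
\]
for a homomorphism $\Phi$ of tori. Since $\nu_{\rm tr}(\xi_2)$ is non-torsion for very general $(F,t)$, as proved in \S\ref{sec: indecomposable} via degeneration to $S^{[2]}$ and Corollary~\ref{cor: EK3}, the class $\nu_{\rm tr}(\xi_4)$ is non-torsion as well. The indecomposability claim then follows from \S\ref{ssec: regulator}.

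\textbf{Main obstacle.} I expect the hardest part to be Step 2: identifying $q_\ast r^\ast\eta$ with $\xi_2$ modulo decomposables. This means controlling the pullback of the cycle supported on the singular cone (vertex $p_0$) through the non-flat map $r$, and checking that no indecomposable correction appears. I would first try to make the proper-intersection argument work as in Lemma~\ref{lem: explicit description}.

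If that identification fails, the fallback is to argue directly by degeneration to $t=0$. There $\xi_4$ should specialize to a cycle of type $\xi_p$ on $S^{[2]}$, with $p=p_0$ viewed through the contraction, and Proposition~\ref{prop: trans regulator (4,1)} in its $G$-version would give non-torsionness. One would then combine this with the deformation stability of $J^6$ modulo $G$-invariant classes, using Corollary~\ref{cor: G-inv=NS}, in the same way as is done for $\xi_2$.
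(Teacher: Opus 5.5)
Your proposal is correct and is essentially the paper's argument. You compose the two Fano correspondences to get $\Phi^{Y\to F}(\Phi^{F\to Y}(\xi_4))=\xi_2$ (Lemma \ref{lem: correspondence}), and then push $\nu_{{\rm tr}}$ through the induced map $J^6(F(Y))_{{\rm tr}}\to J^2(F(Y))_{{\rm tr}}$.

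The obstacle you flag in Step 2 is already settled earlier in the paper. Lemma \ref{lem: Eckardt}, together with the standing genericity assumptions, makes $\pi_Y$ flat over $\hat{C}_1+\hat{C}_2$ with irreducible preimages $\pi_Y^{-1}(\hat{C}_i)$, which map birationally onto $D_i$. So the pullback is naive and the identity holds exactly. No decomposable correction terms arise, and your fallback degeneration is not needed.
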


The proof will be given in \S \ref{ssec: nontrivial (4,1)}.

\subsection{$(2, 1)$-cycles}\label{ssec: (2,1)}

Next we construct a $(2, 1)$-cycle. 
Again let $t\ne 0$.  
We denote by $D_i\subset F(Y)$ the locus of lines intersecting with $\hat{C}_i$. 
We write $D_{i}^{\circ}\subset D_i$ for the Zariski open set parametrizing  
lines which intersect with $\hat{C}_{i}-p_0$ at only one point. 
We have the natural morphism 
\begin{equation}\label{eqn: Di hatCi}
D_i^{\circ} \to \hat{C}_{i}-p_0, \quad l\mapsto l\cap \hat{C}_{i}. 
\end{equation}

\begin{lemma}\label{lem: Dicirc}
We have 
$D_{i} - D_{i}^{\circ} = \Gamma_{1}+\Gamma_{2}$. 
\end{lemma}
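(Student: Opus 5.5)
We prove the two inclusions separately, working in $Y=Y_{F,t}$ with $t\neq 0$. Here $D_i$ is the set of lines $l\subset Y$ with $l\cap\hat{C}_i\neq\emptyset$. A line $l\in D_i$ lies outside $D_i^{\circ}$ exactly when it meets $\hat{C}_i-p_0$ in no point or in at least two points (counted as a set). The second possibility includes the case $l\subset\hat{C}_i$.

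\emph{Inclusion $\supseteq$.} Let $l=\overline{p_0p}\in\Gamma_j$ with $p\in C_j$. First take $j=i$. Then $l\subset\hat{C}_i$, so $l\cap(\hat{C}_i-p_0)$ is infinite and $l\notin D_i^{\circ}$. Now take $j\neq i$. The line $l$ passes through $p_0\in\hat{C}_i$, so $l\in D_i$. The cones $\hat{C}_1,\hat{C}_2$ share the vertex $p_0$. Away from $p_0$ they meet exactly along the three rulings $\overline{p_0p_\pm}$ and $\overline{p_0p_\infty}$. Hence, if $p\notin C_1\cap C_2$, then $l\cap\hat{C}_i=\{p_0\}$, which misses $\hat{C}_i-p_0$, so $l\notin D_i^{\circ}$. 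If $p\in C_1\cap C_2$, then $l\in\Gamma_i$ anyway. This gives $\Gamma_1+\Gamma_2\subset D_i-D_i^{\circ}$, at least set-theoretically. Since the statement is about reduced loci, set-theoretic equality is all we need.

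\emph{Inclusion $\subseteq$.} Let $l\in D_i-D_i^{\circ}$. If $l$ meets $\hat{C}_i$ only at $p_0$, then $l$ passes through $p_0$, and Lemma \ref{lem: C(t)} gives $l\in\Gamma_1+\Gamma_2$ because $t\neq 0$. Otherwise $l$ meets $\hat{C}_i-p_0$ in two distinct points $q_1,q_2$, or $l$ is contained in $\hat{C}_i$. The second case already gives $l\in\Gamma_i$, since the lines on a cone over a curve that is not a line are exactly its rulings. In the first case, suppose $q_1,q_2$ lie on distinct rulings $\overline{p_0a_1}$ and $\overline{p_0a_2}$. Then $l$ lies in the plane $P=\langle p_0,a_1,a_2\rangle$. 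By Lemma \ref{lem: lp1p2}, $Y\cap P=\overline{p_0a_1}+\overline{p_0a_2}+\overline{p_0a_3}$. So $l$ is one of these three lines through $p_0$, and $l\in\Gamma_i$. Note that $P\not\subset Y$, because $C_i$ contains no line. Finally, if $q_1,q_2$ lie on the same ruling, then $l$ is that ruling, and again $l\in\Gamma_i$.

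\emph{Main obstacle.} The delicate step is the case of two intersection points, and specifically the claim that $P\not\subset Y$. For this we use that $P\cap\{x_5=0\}$ is the secant line $\overline{a_1a_2}$ of $C_i$, which is not contained in $Y$. We then invoke Lemma \ref{lem: lp1p2} to identify $Y\cap P$. We also need the description of $\hat{C}_1\cap\hat{C}_2$ via $C_1\cap C_2=\{p_+,p_-,p_\infty\}$; this follows directly from the equations $x_0=x_i=x_5=0$ after projecting from $p_0$.
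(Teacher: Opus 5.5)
Your proof is correct and follows the paper's argument for the inclusion $D_i-D_i^{\circ}\subseteq\Gamma_1+\Gamma_2$: lines through $p_0$ are handled by Lemma~\ref{lem: C(t)}, and two points on one ruling force $l$ to be that ruling. Two points on distinct rulings put $l$ inside $Y\cap P$, which by Lemma~\ref{lem: lp1p2} consists of rulings of $\hat{C}_i$; this case is in fact vacuous, since such an $l$ cannot pass through $p_0$. You go slightly further than the paper by checking $P\not\subset Y$ explicitly and by proving the reverse inclusion via $\hat{C}_1\cap\hat{C}_2=\overline{p_0p_+}\cup\overline{p_0p_-}\cup\overline{p_0p_\infty}$, both of which the paper leaves implicit.
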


\begin{proof} 
A line $l$ in $D_{i} - D_{i}^{\circ}$ either passes through $p_0$ or 
intersects with $\hat{C}_{i}-p_0$ at more than one point. 
In the former case, $l$ belongs to $\Gamma_{1}+\Gamma_{2}$ by Lemma \ref{lem: C(t)}. 
In the latter case, we take two points $p_1\ne p_2$ in the intersection of $l$ with $\hat{C}_i-p_0$. 
If $\overline{p_0p_1}=\overline{p_0p_2}$, then $l=\overline{p_1p_2}$ passes through $p_0$. 
If $\overline{p_0p_1}\ne \overline{p_0p_2}$, 
then $l$ is the residual line of $\overline{p_0p_1}+\overline{p_0p_2}$ 
and hence belongs to $\Gamma_{i}$ by Lemma \ref{lem: C(t)}. 
\end{proof}

The relation of $D_i$ and $\hat{C}_i$ can be explained better 
by considering the universal line 
\begin{equation}\label{eqn: univ line}
F(Y) \stackrel{\pi_{F}}{\longleftarrow} \mathbb{L} \stackrel{\pi_{Y}}{\longrightarrow} Y. 
\end{equation}
Then we have 
\begin{equation}\label{eqn: DiCi}
D_{i} = \pi_{F}( \pi_{Y}^{-1}(\hat{C}_i)). 
\end{equation}
Lemma \ref{lem: Dicirc} shows that 
the projection $\pi_{F}\colon \pi_{Y}^{-1}(\hat{C}_i) \to D_{i}$ is isomorphic over the complement of $\Gamma_{i}$, 
and contracts the lines over $\Gamma_{i}$. 

By \cite{CS} \S 2, 
the projection $\pi_{Y}\colon \mathbb{L}\to Y$ is flat over 
the complement of (at most) finitely many points called \textit{Eckardt points}. 
They are points $p$ such that the intersection of $Y$ 
with the (projective) tangent hyperplane $T_pY$ at $p$ is a cone with vertex $p$; 
the $\pi_{Y}$-fiber over such a point $p$ is a surface, the base of the cone.  
It is known that generic cubic fourfolds have no Eckardt point (\cite{HRS}). 
In our case, however, the cyclic cubic fourfolds are \textit{not} generic. 
Nevertheless we can say the following, 
which is sufficient for our purpose. 

\begin{lemma}\label{lem: Eckardt}
Suppose that $Y$ contains no plane. 
Then $Y$ has no Eckardt point on $\hat{C}_1+\hat{C}_2$. 
\end{lemma}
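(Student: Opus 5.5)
We argue by contradiction: suppose $p\in\hat{C}_1+\hat{C}_2$ is an Eckardt point, so $Y\cap T_pY$ is a cone with vertex $p$ over a cubic surface $\Sigma$. We will produce a plane in $Y$. By symmetry we treat $p\in\hat{C}_1$. The key input is that the cone $\hat{C}_1$ lies in $Y$, and so does the line $\overline{p_0p}$ (or, when $p=p_0$, every line $\overline{p_0q}$ with $q\in C_1$).

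\emph{Case $p=p_0$.} First compute $T_{p_0}Y$. At $p_0$ set $x_5=1$. The linear part of the equation \eqref{eqn: cub4} is then $t x_0$, which is nonzero because $t\ne0$. Hence $T_{p_0}Y=(x_0=0)$. Restricting gives
\begin{equation*}
Y\cap(x_0=0) \: : \: x_4^3-F(0,x_1,x_2,x_3)-x_5x_1x_2=0 .
\end{equation*}
This contains the monomial $x_5x_1x_2$, so it is not a cone with vertex $p_0$. Indeed, a cone with vertex $p_0$ would have an equation independent of $x_5$. So $p_0$ is not Eckardt.

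\emph{Case $p\in\hat{C}_1-p_0$.} Since $Y$ contains the cone $\hat{C}_1$ through $p$, the tangent hyperplane $T_pY$ contains the tangent plane of $\hat{C}_1$ at $p$. This plane is spanned by $p_0$, the point $q=\overline{p_0p}\cap C_1$, and the tangent line to $C_1$ at $q$; when $q$ is the cusp it is the span of $p_0$ and $C_1$'s plane, modulo care. Because $Y\cap T_pY$ is a cone with vertex $p$, every point $y\in Y\cap T_pY$ satisfies $\overline{py}\subset Y$. Now take $y$ ranging over $\hat{C}_1\cap T_pY$. For a general smooth point $q$, $T_pY$ contains the plane $\Pi=\langle p_0,T_qC_1\rangle$. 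If $T_pY$ contains the whole $3$-space $\Lambda=\langle p_0, \text{plane of } C_1\rangle$, then $\hat{C}_1\subset T_pY$. In that case the lines $\overline{py}$ for $y\in\hat C_1$ sweep out the $3$-space $\Lambda$, which would then lie in $Y$, and a fortiori $Y$ contains a plane. Otherwise $T_pY\cap\Lambda=\Pi$. Then $\hat{C}_1\cap\Pi$ consists of the line $\overline{p_0q}$ counted twice together with a further line $\overline{p_0q'}$, where $q'$ is the residual point of $T_qC_1\cap C_1$. Joining $p$ to $\overline{p_0q'}$ gives the plane $\langle p,p_0,q'\rangle=\langle p_0,q,q'\rangle=\Pi$, since $p\in\overline{p_0q}$. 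So $\Pi\subset Y$. (If $q'=q$, i.e.\ $q$ is a flex, use instead the second-order contact; if $q$ is the cusp, handle it directly with coordinates.)

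In all cases we get a plane in $Y$, contradicting the hypothesis. The main obstacle is the special points $q$ (the cusp of $C_1$ and flexes) and verifying concretely that $T_pY\supset\Pi$. For these we would write explicit coordinates $p=[0,0,a_2,a_3,a_4,\lambda]$ and compute $T_pY$ from the partial derivatives of \eqref{eqn: cub4}: the $x_0$-coefficient is $-F_{x_0}+\lambda a_3+t\lambda^2$, the $x_1$-coefficient is $-F_{x_1}-\lambda a_2$, and the remaining coefficients are $-F_{x_2},-F_{x_3},3a_4^2,0$. From these one checks directly which planes through $\overline{p_0p}$ lie in $T_pY$.
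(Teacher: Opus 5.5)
\textbf{There is a genuine gap in your proof: the ruling of $\hat{C}_1$ over the flex of $C_1$ is not handled.} The rest of your argument is sound.

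What works:
\begin{itemize}
\item Your case $p=p_0$, via $T_{p_0}Y=(x_0=0)$ and the surviving term $x_5x_1x_2$, is a valid substitute for the paper's appeal to Lemma~\ref{lem: C(t)}.
\item For a smooth non-flex point $q$, your argument is the paper's own mechanism. The residual ruling $\overline{p_0q'}$ of $Y\cap\Pi$, joined to $p$, sweeps out $\Pi$.
\item The cusp ruling needs no coordinates. The Zariski tangent space of $\hat{C}_1$ along its singular line is all of $\Lambda$, and it lies in $T_pY$. So your subcase $\Lambda\subset T_pY$ already applies there.
\end{itemize}

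Why the flex case fails. Write $F(0,0,x_2,x_3)=\ell_1^2\ell_2$. The unique flex of $C_1$ is the point $q$ with $x_4=0$, $\ell_2=0$. For $p\neq p_0$ on $\overline{p_0q}$ you have $\partial_{x_4}=3a_4^2=0$. But $(F_{x_2},F_{x_3})=\ell_1^2\nabla\ell_2\neq 0$ there, so $\Lambda\not\subset T_pY$. Hence $T_pY\cap\hat{C}_1$ is just $\overline{p_0q}$ set-theoretically, and $Y\cap\Pi=3\,\overline{p_0q}$, which is itself a cone with vertex $p$. Nothing coming from $\hat{C}_1$ contradicts the Eckardt property at such $p$. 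So ``use the second-order contact'' cannot close this case: you need information about $Y$ outside $\Lambda$.

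The paper's proof has the same hole. Its claim $C_i'\cap T_pC_i'=2p+q$ with $q\neq p$ fails at this ruling. Projection from $p_0$ identifies $C_i'$ linearly with $C_i$, so $p$ is a flex of every such $C_i'$. You flagged the case honestly, but did not prove it.

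Simplest repair. If $p$ is Eckardt, then $Y\cap T_pY$ is a cone with vertex $p$ over a cubic surface $\Sigma_0\subset\mathbb{P}^3$. Every cubic surface contains a line $m$. The plane $\langle p,m\rangle$ then lies in $Y$. So a cubic fourfold containing no plane has no Eckardt point anywhere, and the case analysis is unnecessary.

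Repair within your own framework, using the other cone $\hat{C}_2$:
\begin{itemize}
\item The line $\overline{p_0p}\subset Y$ passes through $p$, so it lies in $T_pY$; in particular $p_0\in T_pY$.
\item A hyperplane through the vertex meets $\hat{C}_2$ in rulings over $T_pY\cap C_2\neq\emptyset$. So $T_pY$ contains some ruling $\overline{p_0r}$ with $r\in C_2$.
\item If $q\notin C_2$, then $\overline{p_0r}\neq\overline{p_0q}$, so $p\notin\overline{p_0r}$. The Eckardt property then forces the plane $\langle p,p_0,r\rangle$ into $Y$.
\item This covers the flex and the cusp of $C_1$. The only rulings left are those over $C_1\cap C_2=\{p_+,p_-,p_\infty\}$. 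These points have $x_4\neq 0$, so they are smooth non-flex points of $C_1$, and your $q'$-argument applies.
\end{itemize}
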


\begin{proof}
Let $p\in \hat{C}_{i}$. 
By Lemma \ref{lem: C(t)}, we may assume $p\ne p_0$. 
We take a general hyperplane section $C_i'=\hat{C}_i\cap H$ of $\hat{C}_i$ through $p$. 
This is a cuspidal plane cubic isomorphic to $C_i$.  
We first consider the case $p$ is not on the singular line of $\hat{C}_i$. 
By our assumption, $Y$ does not contain the tangent plane of $\hat{C}_i$ at $p$. 
This shows that $T_pC_i'\not\subset Y$ for a generic choice of $H$. 
Then $Y\cap T_pC_i'=C_i'\cap T_pC_i'=2p+q$ for some $q\ne p\in C_i'$. 
On the other hand, $T_pC_i'$ is contained in $T_pY$. 
Thus $q$ is contained in $Y\cap T_pY$ but $\overline{pq}$ is not. 
Hence $Y\cap T_pY$ is not a cone. 

Next suppose that $p$ is on the singular line of $\hat{C}_i$. 
Then $p$ is the cusp of $C_i'$. 
For general $q\ne p \in C_i'$ the line $\overline{pq}$ is not contained in $Y$, 
for otherwise $Y$ would contain the linear hull of $C_i'$. 
On the other hand, $\overline{pq}$ is contained in $T_pY$ because it intersects with $Y$ at $p$ with multiplicity $2$. 
Thus $q\in Y\cap T_pY$ but $\overline{pq}\not\subset Y\cap T_pY$. 
Hence $Y\cap T_pY$ is not a cone again. 
\end{proof}

By Lemma \ref{lem: Eckardt}, 
$Y=Y_{F,t}$ has no Eckardt point on $\hat{C}_1+\hat{C}_2$ for general $(F, t)$.  
Moreover, a general fiber of $\pi_{Y}^{-1}(\hat{C}_i) \to \hat{C}_i$ is irreducible for general $(F, t)$. 
In what follows, we assume this genericity condition. 
Then $\pi_{Y}^{-1}(\hat{C}_i)$ is flat over $\hat{C}_i$, 
so in particular it is irreducible of dimension $3$. 
It follows from \eqref{eqn: DiCi} that $D_i$ is an irreducible divisor. 

We consider the composition of \eqref{eqn: Di hatCi} and the projection $\hat{C}_i \dashrightarrow C_i$: 
\begin{equation}\label{eqn: Di(t) to Ci}
D_i \stackrel{\eqref{eqn: Di hatCi}}{\dashrightarrow} \hat{C}_{i}\dashrightarrow C_i.  
\end{equation}
The indeterminacy locus is $\Gamma_{j}$ where $j\ne i$. 
We define a rational function on $D_i$, denoted by $\tilde{\phi_{i}}$, 
as the pullback of $\phi_i$ by \eqref{eqn: Di(t) to Ci}. 
If we denote by $W_{\pm}\subset D_i^{\circ}$ the locus of lines intersecting with $\overline{p_0p_{\pm}}$, 
we have 
\begin{equation}\label{eqn: div (2,1)}
{\rm div}(\tilde{\phi}_{1})|_{D_{1}^{\circ}} = W_{+} - W_{-}, \quad 
{\rm div}(\tilde{\phi}_{2})|_{D_{2}^{\circ}} = W_{-} - W_{+}. 
\end{equation}
Now we can define a $(2, 1)$-cycle on $F(Y)$ by 
\begin{equation*}
\xi_2 = (D_1, \tilde{\phi}_{1}) + (D_2, \tilde{\phi}_{2}). 
\end{equation*}
By \eqref{eqn: div (2,1)}, this satisfies the cocycle condition. 

\begin{theorem}\label{thm: (2,1)}
When $(F, t)$ is very general, 
the $(2, 1)$-cycle $\xi_2$ on $F(Y_{F,t})$ has non-torsion $G$-regulator. 
In particular, for very general $(F, t)$, 
the transcendental regulator of $\xi_{2}$ is non-torsion   
and so $\xi_{2}$ has non-torsion indecomposable part. 
\end{theorem}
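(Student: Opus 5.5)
Since the second and third assertions follow from the first by Corollary \ref{cor: G-inv=NS} (for very general $(F,t)$ we have $H^2(F(Y_{F,t}),\Z)^G={\rm NS}$, so $\nu_G=\nu_{{\rm tr}}$) together with the factorization of $\nu_{{\rm tr}}$ through ${\CH}^2(X,1)_{\rm ind}$, the task is to show that $\nu_G(\xi_2)$ is non-torsion for very general $(F,t)$. The strategy is degeneration to $t=0$. The Fano varieties $F(Y_{F,t})$, $t\neq 0$, are to be completed over $U$ by the symplectic resolutions $S^{[2]}\to F(Y_{F,0})$ of \eqref{eqn: S2 F(Y)}. After a base change in $t$ (of order related to the transversal $A_2$-singularities, e.g.\ a cyclic cover $t=s^3$ or $s^6$) I would invoke a simultaneous resolution to obtain a smooth projective family $\mathcal{X}\to \tilde U$ with $G$-action. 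Its fibers are $F(Y_{F,t})$ for $t\neq0$ and $S^{[2]}$ for $t=0$, and the family is topologically locally trivial. This is where a simultaneous resolution for families of $A_2$-singularities is needed.

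On this family the $G$-Jacobians $J^2(\mathcal{X}_u,G)$ form a family of generalized complex tori. Because $H^2(\mathcal{X}_u,\Z)^G$ is a locally constant sublattice, rank $1$ by Corollary \ref{cor: G-inv=NS}, this family is a holomorphic fiberwise quotient of the relative $J^2$. Next I would extend $\xi_2$ to a relative cycle $\Xi=(\mathcal{D}_1,\tilde\phi_1)+(\mathcal{D}_2,\tilde\phi_2)$, where $\mathcal{D}_i$ is the closure of the family of $D_i$, so that $\nu_G(\Xi)$ becomes a holomorphic normal function section. The key computation is the specialization at $t=0$. The divisor $D_i$ degenerates to the locus of lines meeting the cone over $C_i\subset S$. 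In $S^{[2]}$ these are the points $p_1+p_2$ such that $l_{p_1p_2}$ meets $\hat C_i$, and this set contains $C_i+S$. The remaining components of the specialization are contained in the exceptional divisors $E_1,E_2$ or are of the form $S+p$.

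I expect the specialization of $\Xi$ to equal $\xi^{[2]}$ of Lemma \ref{lem: explicit description}, up to (a) decomposable cycles $(Z,c)$ with $Z$ supported on $G$-invariant divisors such as $E_\alpha$ and $\delta$, whose classes lie in $H^2(S^{[2]},\Z)^G$, and (b) a nonzero integer multiple. The function $\tilde\phi_i$ restricted to $C_i+S$ is $\phi_i^{[2]}$, because the point of $\hat C_i$ hit by $l_{p_1p_2}$ projects from $p_0$ to the point of $C_i$. Granting this, $\nu_G(\Xi|_{t=0})$ is a nonzero multiple of $\nu_G(\xi^{[2]})$, which is non-torsion for very general $F$ by Corollary \ref{cor: EK3} and Lemma \ref{prop: G-regulator (2,1)}. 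For any fixed $N$, the locus in $\tilde U$ where $N\nu_G(\Xi)=0$ is a closed analytic (in fact algebraic) subset. It does not contain the fiber over very general $F$ at $t=0$, so it is a proper subset, and its countable union over $N$ misses very general $(F,t)$. This proves the claim.

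The main obstacle is the specialization step. One must control the flat limit of $\mathcal{D}_i$ in the resolved family, identify all extra components, and verify the cocycle data so that the discrepancy from $\xi^{[2]}$ is decomposable with $G$-invariant support. If components of the type $S+p$ or sections inside $E_\alpha$ carry non-constant functions, one must check that their regulator contributions die in $J^2(S^{[2]},G)$. My first attempt would be a direct computation in the coordinates \eqref{eqn: cub4}: describe the lines meeting $\hat C_i$ through the plane spanned by $p_0$ and the line, and track their limits as $t\to0$ by means of the map \eqref{eqn: S2 F(Y)}.
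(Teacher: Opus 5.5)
Your outline is the paper's route: reduce to $\nu_G$, degenerate to $t=0$ through Namikawa's simultaneous resolution with central fiber $S^{[2]}$, identify the limit of $\xi_2$ with $\xi^{[2]}$, and conclude by Corollary~\ref{cor: EK3} and holomorphicity of the normal function. However, the step where you set up the $G$-Jacobian family is false as stated.

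The $G$-action does not extend to a regular action on the resolved family. For $t\neq 0$ the invariant lattice is $\Z\mathcal{O}(1)$. At $t=0$, $H^2(S^{[2]},\Z)^G=H^2(S,\Z)^G\oplus\Z\delta$ contains the pullbacks of both rulings of $Q$ and $\delta$, so it has rank at least $3$. Thus $H^2(\mathcal{X}_u,\Z)^G$ is not locally constant across $t=0$, and the $G$-Jacobians do not form a family of tori with central fiber $J^2(S^{[2]},G)$. Your closing argument (the torsion locus ``does not contain the fiber at $t=0$'') therefore evaluates the section in a torus that does not exist.

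The fix is the paper's argument. Work over a disc $\Delta$ with $F$ fixed and very general. Extend the rank-one local system $\Z\mathcal{O}(1)$ on $\Delta^\ast$ to a sub local system $\mathcal{H}\subset R^2\pi_\ast\Z$ on $\Delta$. Its stalk at $0$ is the pullback of $\mathcal{O}(1)$ under the $G$-equivariant map $S^{[2]}\to F(Y_0)$, hence lies in $H^2(S^{[2]},\Z)^G$. The Jacobian fibration $\mathcal{J}$ of $R^2\pi_\ast\Z/\mathcal{H}$ then has $\mathcal{J}_t=J^2(F(Y_t),G)$ for $t\neq0$. At $t=0$ there is a surjection $\mathcal{J}_0\twoheadrightarrow J^2(S^{[2]},G)$ carrying the central value of the normal function to $\nu_G(\xi^{[2]})$. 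Non-torsionness of the image gives non-torsionness of $\nu(0)$, hence of $\nu(t)$ for very general $t$.

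The second gap is the specialization itself. You leave it as an expectation, although it is the heart of the proof. The paper settles it in three steps.

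(i) A line in $D_i(0)^\circ$ avoids $p_0$, so it meets the cone $\hat S$ in exactly two points, one of them on $\hat C_i$. Hence it equals $p+q$ with $p\in C_i$. So the non-exceptional part of the limit divisor is $C_i+S$ with multiplicity one, and the limit function on it is $\phi_i^{[2]}$. Surfaces such as $S+p$ have codimension $2$ and cannot occur as components.

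(ii) The limit function is defined on each $E_\alpha$. Otherwise ${\rm div}(\phi_i^{[2]})$ would contain $E_\alpha\cap(C_i+S)$, which is $C_i+C_i$ or $C_i\times_{\proj^1}S$, and it does not.

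(iii) Any $(2,1)$-cycle $(E_1,f_1)+(E_2,f_2)$ is decomposable. The cocycle condition forces ${\rm div}(f_\alpha)$ to be a multiple of $E_1\cap E_2\simeq S$, which is a section of the $\proj^1$-bundle $E_\alpha\to S$. Degree zero on the fibers then makes $f_\alpha$ constant. Since each $E_\alpha$ is $G$-invariant, these terms die in $J^2(S^{[2]},G)$. This removes your worry about non-constant functions on the exceptional part.
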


The proof will be given in \S \ref{ssec: nontrivial (2,1)}.

\section{Proof of indecomposability}\label{sec: indecomposable}

In this section we prove Theorem \ref{thm: (2,1)} and Theorem \ref{thm: (4,1)}. 
We first prove Theorem \ref{thm: (2,1)} in \S \ref{ssec: nontrivial (2,1)} by a degeneration method. 
Then, in \S \ref{ssec: nontrivial (4,1)}, we deduce Theorem \ref{thm: (4,1)} from Theorem \ref{thm: (2,1)} 
by using the Fano correspondence.

\subsection{Proof of Theorem \ref{thm: (2,1)}}\label{ssec: nontrivial (2,1)}

In this subsection we prove Theorem \ref{thm: (2,1)}. 
The second assertion follows from the first by Corollary \ref{cor: G-inv=NS}, 
so it suffices to prove the first assertion. 
Since the Jacobians $J^2(X, G)$ are stable under deformation and the normal functions are holomorphic, 
we see that the locus where the $G$-regulators of our cycles are torsion 
is a union of countably many analytic sets. 
Thus we are reduced to verifying non-emptyness of its complement, i.e., 
existence of $(F, t)$ such that $\xi_2$ has non-torsion $G$-regulator. 
We do this by a degeneration method. 

In what follows, we fix a very general $F\in U_0$ according to Corollary \ref{cor: EK3}. 
Let $\Delta\subset \mathbb{A}^1$ be a small disc around $t=0$. 
We restrict the family $Y\to U$ over $\{ F \} \times \Delta \subset U$ and rewrite 
$Y|_{\{ F \} \times \Delta} \to \Delta$ as $Y\to \Delta$. 
We denote by $Y_{t}=Y_{F,t}$ the fiber over $t\in \Delta$. 
In order to specify $t$, 
we write $D_i=D_i(t)$, $\tilde{\phi_i} = \tilde{\phi_i}(t)$ and $\xi_2 = \xi_2(t)$. 
 
Let $(S, G)$ be the Eisenstein $K3$ surface associated to $Y_0$ in the sense of \S \ref{ssec: ccc4}. 
By Namikawa's theory \cite{Na}, after taking a base change if necessary, 
we may take a simultaneous resolution 
$X\to F(Y/\Delta)$ of the relative Fano variety. 
Our cycle family $(\xi_{2}(t))_{t\ne 0}$ is defined over $\Delta^{\ast}=\Delta-\{ 0 \}$. 
On the other hand, recall from \S \ref{ssec: (2,1) EK3} that 
we have the induced $(2, 1)$-cycle 
\begin{equation*}
\xi^{[2]} = (C_1+S, \: \phi_{1}^{[2]}) + (C_2+S, \: \phi_{2}^{[2]}) 
\end{equation*}
on the central fiber $X_0=S^{[2]}$. 
(See Lemma \ref{lem: explicit description} for the notation.) 
Recall that the exceptional divisor $E=E_1+E_2$ of $S^{[2]}\to F(Y_0)$ 
is a union of two ${\proj}^1$-bundles over $S$. 

\begin{lemma}\label{lem: limit (2,1)}
We have $\lim_{t\to 0} \xi_2(t) = \xi^{[2]} + \xi'$ on $X_0$ where $\xi'$ is a $(2, 1)$-cycle supported on $E$. 
\end{lemma}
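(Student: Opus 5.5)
We take the limit componentwise. Over $\Delta^{\ast}$ the divisors $D_i(t)\subset F(Y_t)\subset X_t$ form a flat family, and we take its closure $\mathcal{D}_i\subset X$. The functions $\tilde{\phi}_i(t)$ are pullbacks of the fixed function $\phi_i$ on $C_i$, which does not depend on $t$. So they glue to a rational function $\tilde{\phi}_i$ on $\mathcal{D}_i$, and we define $\lim_{t\to 0}\xi_2(t)$ as the specialization $\sum_i (\mathcal{D}_i|_{X_0}, \tilde{\phi}_i|)$. Concretely, we write the flat limit $\mathcal{D}_i\cdot X_0$ as a sum of irreducible divisors of $X_0=S^{[2]}$ with multiplicities. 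On each component not contained in $\mathrm{div}(\tilde\phi_i)$ we restrict $\tilde{\phi}_i$; on components lying in that support we first correct by a tame-symbol/boundary term, which is supported in the central fiber. The cocycle condition of the limit then follows from that of $\xi_2(t)$ together with properness of $\mathcal{D}_i\to\Delta$.

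The main step is to identify the components of $\mathcal{D}_i\cdot X_0$ lying outside $E$. At $t=0$ the cones $\hat C_i$ still lie in $Y_0$, since the proof of Lemma \ref{lem: cone in Yt} does not use $t$. Hence $D_i(0)$, the locus of lines on $Y_0$ meeting $\hat C_i$, is defined and contains the limit of $D_i(t)$. Take a line $l_{p_1p_2}$ with $p_1+p_2\in S^{[2]}$ general and $p_1\in C_i$. It lies in the plane $P$ spanned by $p_0,p_1,p_2$, so it meets $\overline{p_0p_1}\subset\hat C_i$. Therefore the strict transform of $D_i(0)$ under \eqref{eqn: S2 F(Y)} contains $C_i+S$. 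Conversely, take a line $l\not\ni p_0$ on $Y_0$ meeting $\hat C_i$ at a point $q\neq p_0$. Then the plane spanned by $l$ and $p_0$ cuts $Y_0$ in $l$ plus two lines through $p_0$, one of which is $\overline{p_0q}$, so $l=l_{p_1p_2}$ with $p_1\in C_i$. This shows that, off $E$, the limit divisor is exactly $C_i+S$. We expect multiplicity one, and will check it by a local computation at a general point, where the family $D_i(t)$ is smooth over $\Delta$ because $\pi_Y^{-1}(\hat C_i)$ is flat.

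Next we compare the functions. On $C_i+S$, the map \eqref{eqn: Di(t) to Ci} sends $l_{p_1p_2}$ to its intersection point with $\hat C_i$, which lies on $\overline{p_0p_1}$, and then to $p_1\in C_i$. So the restriction of $\tilde{\phi}_i$ equals $\phi_i\circ\pi\circ g=\phi_i^{[2]}$, which is the description in Lemma \ref{lem: explicit description}. Every remaining component of the limit, together with the correction terms coming from components of $\mathrm{div}(\tilde\phi_i)$ that collapse, lies over $\Gamma_1\cup\Gamma_2$ or over $S$, hence inside $E$. We collect all of these into $\xi'$. Since the limit and $\xi^{[2]}$ both satisfy the cocycle condition, so does $\xi'$, which makes $\xi'$ a $(2,1)$-cycle supported on $E$.

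The main obstacle is the middle step: ruling out extra components of the limit outside $E$ and controlling multiplicities. In particular we must check that no component of $D_i(0)$ arises from lines through $p_0$ other than those recorded by $E$. For this we would use the explicit description of $E_\alpha$ via the rulings of $Q$ recalled in \S\ref{ssec: ccc4}, together with a dimension count on lines meeting $\hat C_i$.
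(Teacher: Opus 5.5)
You reach the same conclusion by a route that differs from the paper's only in how it handles the components over $E$. Your identification of the non-$E$ part is the paper's Claim. A line $l\not\ni p_0$ on $Y_0$ meeting $\hat C_i$ is the residual line of $\overline{p_0p_1}+\overline{p_0p_2}$ with $p_1\in C_i$ and $p_2\in S$, so $[l]=p_1+p_2\in C_i+S$. There the map $l\mapsto l\cap\hat C_i\mapsto C_i$ restricts to the map defining $\phi_i^{[2]}$.

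The difference is on $E$. The paper proves that $E_\alpha\not\subset{\rm div}(\tilde\phi_i)$ on the closure of $\bigcup_{t\ne0}D_i(t)$. Otherwise ${\rm div}(\phi_i^{[2]})$ would contain $E_\alpha\cap(C_i+S)$, which is $C_i+C_i$ or $C_i\times_{\proj^1}S$ and visibly is not in that divisor. Non-normality along $E_\alpha$ is handled by normalizing and taking norms. So the paper's limit on $E$ is a naive restriction of $\tilde\phi_i$, or norms of its restrictions.

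You instead use the tame-symbol specialization $\partial\{t,\tilde\phi_i\}$ on the normalization. It is defined on every component of the central fibre, whether or not that component lies in ${\rm div}(\tilde\phi_i)$. Its output is a cocycle on $X_0$ because the horizontal tame symbols cancel by the cocycle condition on $X_t$, together with $d^2=0$ in the Gersten complex. That, rather than properness of $\mathcal{D}_i\to\Delta$, is the reason.

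Your route bypasses the geometry of $E_\alpha\cap(C_i+S)$, and it is the form of specialization that matches the extended normal function used afterwards. The paper's route gives the more concrete description of $\xi'$. The lemma does not need this, since $\xi'$ is later discarded as decomposable.

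The obstacle you single out at the end is not an obstacle. Anything lying over lines through $p_0$ lies over ${\rm Sing}\,F(Y_0)=S$, hence in $E$ by definition.

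What does deserve a sentence is that the limit itself, and not merely $D_i(0)$, contains $C_i+S$, and with multiplicity one. In the tame-symbol formula a multiplicity $m$ would replace $\phi_i^{[2]}$ by its $m$-th power. Both facts come from flatness of the relative family of lines meeting $\hat C_i$ near a general point, as you indicate. The paper takes them for granted when it asserts that $D_i(0)^\circ$ is the limit of $D_i(t)^\circ$.
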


\begin{proof}
Note that $D_{i}(t)^{\circ}$ is defined even when $t=0$. 
Then $D_i(0)^{\circ}\subset F(Y_0)$ is the limit of $D_i(t)^{\circ}$ 
and is disjoint from the singular locus of $F(Y_0)$ 
(the locus of lines through $p_0$). 

\begin{claim}\label{claim}
The closure of $D_i(0)^{\circ}$ in $X_0=S^{[2]}$ is the divisor $C_i+S$, 
and the limit of $\tilde{\phi_{i}}(t)$ on $D_i(0)^{\circ}$ coincides with the induced function $\phi_{i}^{[2]}$ on $C_i+S$. 
\end{claim}

\begin{proof} 
Since $C_i+S$ is irreducible, it suffices to verify $D_i(0)^{\circ}\subset C_i+S$ for the first assertion. 
Let $[l]\in D_i(0)^{\circ}$. 
Let $\hat{S}$ be the cone over $S$ with vertex $p_0$. 
The fact that $S^{[2]}\to F(Y_0)$ is isomorphic outside the singular locus of $F(Y_0)$ 
means that $l$ intersects with $\hat{S}$  at exactly two points. 
One is $l\cap \hat{C}_i$: we denote by $p$ the corresponding point on $C_i$, 
i.e., $\overline{p_0p}$ passes through $l\cap \hat{C}_i$. 
We denote by $q$ the point on $S$ corresponding to the other intersection point. 
Then $l$ is the residual line of $\overline{p_0p}+\overline{p_0q}$. 
This means $[l]=p+q\in C_i+S$ in $S^{[2]}$. 

This argument also shows that the induced rational function 
\begin{equation*}
\phi_{i}^{[2]} \: : \: C_i + S \dashrightarrow C_i \stackrel{\phi_{i}}{\dashrightarrow} {\proj}^1 
\end{equation*}
coincides with 
\begin{equation*}
D_i(0) \stackrel{\eqref{eqn: Di(t) to Ci}}{\dashrightarrow} C_i \stackrel{\phi_{i}}{\dashrightarrow} {\proj}^1,  
\end{equation*}
and so is the limit of $\tilde{\phi_i}(t)$ on $C_1+S$. 
\end{proof}

We go back to the proof of Lemma \ref{lem: limit (2,1)}. 
By Claim \ref{claim}, the limit of $D_i(t)$ in $X_0$ 
is the sum of $C_i+S$ and a divisor with support $E$. 
Let $E_{\alpha}$ be a component of $E$. 
It remains to verify that $\tilde{\phi_{i}}(t)$ has a limit on $E_{\alpha}$. 
(Strictly speaking, it is more convenient to pass to the Bloch complex 
when discussing specialization of $(2, 1)$-cycles, 
but we omit it for simplicity.) 

Let $D_i$ be the closure of $\bigcup_{t\ne0}D_i(t)$ in $X$. 
This is an irreducible divisor of $X$ whose central fiber is $(C_i+S)\cup E$. 
Then $(\tilde{\phi_{i}}(t))_{t\ne 0}$ defines a rational function on $D_{i}$, 
which we denote by $\tilde{\phi_{i}}$. 
We first consider the case $D_i$ is normal at a general point of $E_{\alpha}$. 
What has to be shown is that $\tilde{\phi_{i}}|_{E_{\alpha}}$ is well-defined, i.e., 
$E_{\alpha} \not\subset {\rm div}(\tilde{\phi_{i}})$.  
(This corresponds to the proper intersection condition in the Bloch complex.) 
If $E_{\alpha}\subset {\rm div}(\tilde{\phi_{i}})$ to the contrary, 
then 
\begin{equation}\label{eqn: divisor inclusion}
{\rm div}(\tilde{\phi_{i}}|_{C_i+S}) = {\rm div}(\tilde{\phi_{i}}) \cap (C_i+S) \supset E_{\alpha} \cap (C_i+S). 
\end{equation}
On the other hand, a geometric consideration shows that 
\begin{equation*}
E_{\alpha} \cap (C_i+S) = 
\begin{cases}
C_i + C_i & \alpha=i, \\ 
C_i\times_{{\proj}^1}S & \alpha\ne i. 
\end{cases}
\end{equation*}
Here $S\to {\proj}^1$ is the elliptic fibration corresponding to $\alpha$, 
of which $C_i$ is a trisection. 
Note that $C_i+S$ is non-normal at $C_i+C_i$ with two branches. 
Clearly the induced function $\phi_{i}^{[2]}$ on $C_i+S$ 
does not contain these surfaces in its divisor. 
This contradicts with \eqref{eqn: divisor inclusion}. 
Hence $E_{\alpha} \not\subset {\rm div}(\tilde{\phi_{i}})$. 

When $D_i$ is non-normal at $E_{\alpha}$, 
we take the normalization $\hat{D_i}\to D_i\subset X$ of $D_i$. 
Let $\hat{E}_{\alpha}$ be the inverse image of $E_{\alpha}$ in $\hat{D}_{i}$ and 
$\hat{E}_{\alpha}=\sum_{j}\hat{E}_{\alpha,j}$ be its irreducible decomposition. 
By the above argument, the limit of $\tilde{\phi_{i}}$ on each $\hat{E}_{\alpha,j}$ exists. 
Taking its norm under $\hat{E}_{\alpha,j} \to E_{\alpha}$, 
we obtain a rational function on $E_{\alpha}$, say $\tilde{\phi}_{i,j}$. 
Then $\prod_{j}\tilde{\phi}_{i,j}$ gives the desired function on $E_{\alpha}$, namely 
$\sum_{j}(E_{\alpha}, \tilde{\phi}_{i,j}) = (E_{\alpha}, \prod_{j}\tilde{\phi}_{i,j})$ 
gives the limit chain of $(D_{i}(t), \tilde{\phi_{i}}(t))$ on $E_{\alpha}$. 
\end{proof}


We resume the proof of Theorem \ref{thm: (2,1)}.  
Since $E_1\cap E_2\simeq S$ is an irreducible divisor in both $E_1$ and $E_2$, 
a consideration of the cocycle condition shows that a $(2, 1)$-cycle supported on 
$E=E_1+E_2$ must be decomposable. 
Hence, writing $\xi(0) = \lim_{t\to 0} \xi_2(t)$, we find from Lemma \ref{lem: limit (2,1)} that 
\begin{equation*}
\nu_{G}(\xi_2(0)) = \nu_{{\rm tr}}(\xi_2(0)) = \nu_{{\rm tr}}(\xi^{[2]}) = \nu_{G}(\xi^{[2]}), 
\end{equation*}
and this is non-torsion by Corollary \ref{cor: EK3}. 

Now we can do a degeneration argument like \cite{MS}, \cite{Sa} as follows. 
We denote by $\pi \colon X\to \Delta$ the projection. 
This is a smooth family of holomorphic symplectic manifolds 
with $G$-action over $\Delta^{\ast}$ and over $t=0$. 
(The $G$-action ``jumps' at $t=0$.) 
Then $H^{2}(X_t, \Z)^{G}$ for $t\ne0$ form a rank $1$ sub local system of $R^{2}\pi_{\ast}\Z |_{\Delta^{\ast}}$. 
Explicitly, by Corollary \ref{cor: G-inv=NS}, the stalks are spanned by $\mathcal{O}(1)$. 
By the triviality of $R^{2}\pi_{\ast}\Z$, this local system over $\Delta^{\ast}$ 
extends to a sub local system $\mathcal{H}$ of $R^{2}\pi_{\ast}\Z$ over $\Delta$. 
The stalk of $\mathcal{H}$ over $t=0$ is spanned by the pullback of $\mathcal{O}(1)$ by $X_0\to F(Y_0)$. 
In particular, it is contained in $H^2(X_0, \Z)^{G}$. 
Therefore, if $\mathcal{J}\to \Delta$ is the Jacobian fibration associated to $R^{2}\pi_{\ast}\Z/\mathcal{H}$, 
we have $\mathcal{J}_{t}=J^2(F(Y_t), G)$ for $t\ne 0$, 
while we have a natural surjection $\mathcal{J}_0\twoheadrightarrow J^2(S^{[2]}, G)$ for $t=0$. 

The regulator of $\xi_{2}(t)$ defines a holomorphic section 
$(\nu(t))_t$ of $\mathcal{J}$ over $\Delta$. 
The central value $\nu(0)\in \mathcal{J}_0$ is sent to 
$\nu_{G}(\xi_2(0)) = \nu_{G}(\xi^{[2]})$ by $\mathcal{J}_0\twoheadrightarrow J^2(S^{[2]}, G)$. 
Since $\nu_{G}(\xi^{[2]})$ is non-torsion as noted above, 
we find that $\nu(0)$ is non-torsion. 
By the holomorphicity of $\nu$, this shows that 
$\nu(t)=\nu_G(\xi_2(t))$ is non-torsion for very general $t\ne 0$. 
This proves Theorem \ref{thm: (2,1)}.

\subsection{Proof of Theorem \ref{thm: (4,1)}}\label{ssec: nontrivial (4,1)}

In this subsection we deduce Theorem \ref{thm: (4,1)} from Theorem \ref{thm: (2,1)}. 
Let $\mathbb{L}$ be the universal line as in \eqref{eqn: univ line}.  
We use the correspondence by $\mathbb{L}$ in both directions: 
\begin{equation*}
\Phi^{F\to Y} := \pi_{Y\ast}\circ \pi_{F}^{\ast} \; : \; 
{\CH}^4(F(Y), 1) \to {\CH}^3(Y, 1), 
\end{equation*} 
\begin{equation*}
\Phi^{Y\to F} := \pi_{F \ast}\circ \pi_{Y}^{\ast} \; : \; 
{\CH}^3(Y, 1) \to {\CH}^2(F(Y), 1). 
\end{equation*} 

\begin{lemma}\label{lem: correspondence}
We have 
$\Phi^{Y\to F}(\Phi^{F\to Y}(\xi_4))=\xi_2$. 
\end{lemma}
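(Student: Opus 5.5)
The plan is to compute both correspondences directly on the Gersten representatives. We apply $\pi_F^{\ast}$ and $\pi_Y^{\ast}$ as pullbacks along maps that are flat where needed, and $\pi_{Y\ast}$ and $\pi_{F\ast}$ as proper pushforwards given by the norm of rational functions. The key observation is that every pushforward occurring here is birational onto its image, so each norm is simply the transported function.

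\emph{Step 1: $\Phi^{F\to Y}(\xi_4)$.} Since $\pi_F\colon \mathbb{L}\to F(Y)$ is a ${\proj}^1$-bundle, it is flat, and
\begin{equation*}
\pi_F^{\ast}\xi_4=\sum_{i}\bigl(\pi_F^{-1}(\Gamma_i),\ \phi_i\circ\pi_F\bigr).
\end{equation*}
The surface $\pi_F^{-1}(\Gamma_i)$ is the family of lines $\overline{p_0p}$ with $p\in C_i$. Hence $\pi_Y$ maps it onto the cone $\hat{C}_i$. This map is birational: it is an isomorphism away from the section over $p_0$, which it contracts to the vertex. The norm of $\phi_i\circ\pi_F$ is therefore the function $\hat{\phi}_i$ on $\hat{C}_i$ obtained by pulling back $\phi_i$ along the projection $\hat{C}_i\dashrightarrow C_i$ from $p_0$. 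This gives
\begin{equation*}
\Phi^{F\to Y}(\xi_4)=(\hat{C}_1,\hat{\phi}_1)+(\hat{C}_2,\hat{\phi}_2)\in {\CH}^3(Y,1).
\end{equation*}
As a check, the cocycle condition holds because ${\rm div}(\hat{\phi}_1)=\overline{p_0p_+}-\overline{p_0p_-}=-{\rm div}(\hat{\phi}_2)$.

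\emph{Step 2: apply $\Phi^{Y\to F}$.} The map $\pi_Y$ need not be flat globally, because of possible Eckardt points. However, by Lemma \ref{lem: Eckardt} and the standing genericity assumption, $\pi_Y$ is flat over an open neighbourhood $V$ of $\hat{C}_1\cup\hat{C}_2$. The cycle $\Phi^{F\to Y}(\xi_4)$ is supported in $V$, together with the divisors of its functions. So its pullback can be computed as the naive flat pullback over $V$. I would justify this either by restricting to the flat locus $\pi_Y^{-1}(V)$ or by viewing $\mathbb{L}\subset F(Y)\times Y$ as a correspondence, for which these cycles meet properly. This yields
\begin{equation*}
\pi_Y^{\ast}\Phi^{F\to Y}(\xi_4)=\sum_i\bigl(\pi_Y^{-1}(\hat{C}_i),\ \hat{\phi}_i\circ\pi_Y\bigr),
\end{equation*}
where $\pi_Y^{-1}(\hat{C}_i)$ is irreducible of dimension $3$, as noted after Lemma \ref{lem: Eckardt}.

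Next, by \eqref{eqn: DiCi} and Lemma \ref{lem: Dicirc}, the map $\pi_F\colon\pi_Y^{-1}(\hat{C}_i)\to D_i$ is an isomorphism over $D_i^{\circ}$, so it is birational. Its norm is therefore just the function $\hat{\phi}_i\circ\pi_Y$ transported to $D_i$. On $D_i^{\circ}$, this function is $l\mapsto\phi_i(\text{projection of } l\cap\hat{C}_i)$, which is by definition $\tilde{\phi}_i$, the pullback of $\phi_i$ along \eqref{eqn: Di(t) to Ci}. Summing over $i$ gives $(D_1,\tilde{\phi}_1)+(D_2,\tilde{\phi}_2)=\xi_2$.

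I expect the main obstacle to be the legitimacy of the pullback $\pi_Y^{\ast}$ on higher Chow cycles when $\pi_Y$ is not globally flat, and in particular making sure no extra components appear over points of $\hat{C}_i$ where fibres jump. Lemma \ref{lem: Eckardt} is exactly what rules this out along the cones. If needed, I would pass to the Bloch complex and use that the relevant cycles meet $\mathbb{L}$ properly.
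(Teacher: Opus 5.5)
Your proposal is correct and follows the same route as the paper: you identify $\Phi^{F\to Y}(\xi_4)$ with $(\hat{C}_1,\hat{\phi}_1)+(\hat{C}_2,\hat{\phi}_2)$, then use $\pi_{F\ast}\pi_Y^{\ast}\hat{C}_i=D_i$ and check that the transported functions are the $\tilde{\phi}_i$. What you add is an explicit account of two points the paper uses tacitly: $\pi_Y$ is flat near the cones by Lemma \ref{lem: Eckardt}, so the pullback can be computed naively, and both pushforwards are birational onto their images, so the norms are just the transported functions.
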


\begin{proof}
We have 
\begin{equation*}
\Phi^{F\to Y}(\xi_4) = (\hat{C}_1, \hat{\phi}_1) + (\hat{C}_2, \hat{\phi}_2), 
\end{equation*}
where $\hat{\phi}_i$ is the composition of the projection $\hat{C}_i\dashrightarrow C_i$ with $\phi_i$. 
On the other hand, we see from \eqref{eqn: DiCi} that 
$\pi_{F\ast}\pi_{Y}^{\ast} \hat{C}_{i} = D_i$. 
Hence, by the definition of $\phi_i(t)$, we have 
\begin{equation*}
\xi_{2} = \Phi^{Y\to F}((\hat{C}_1, \hat{\phi}_1) + (\hat{C}_2, \hat{\phi}_2)). 
\end{equation*}
This proves our assertion. 
\end{proof}

By taking the regulator, Lemma \ref{lem: correspondence} tells us that 
$\nu(\xi_2)$ is the image of $\nu(\xi_4)$ under the map 
\begin{equation*}
J^6(F(Y)) \to J^4(Y) \to J^2(F(Y)) 
\end{equation*}
induced by the correspondence by $\mathbb{L}$ (twice). 
In particular, $\nu_{{\rm tr}}(\xi_2)$ is the image of $\nu_{{\rm tr}}(\xi_4)$ by a homomorphism 
$J^6(F(Y))_{{\rm tr}}\to J^2(F(Y))_{{\rm tr}}$. 
In this way, 
non-torsionness of $\nu_{{\rm tr}}(\xi_2)$ (proved in Theorem \ref{thm: (2,1)}) 
implies that of $\nu_{{\rm tr}}(\xi_4)$. 
This proves Theorem \ref{thm: (4,1)}.

\section{$(2, 1)$-cycles on Lagrangian subvarieties}\label{sec: Lagrangian}

Let $X$ be a holomorphic symplectic manifold. 
A smooth subvariety $W\subset X$ with $\dim X=2\dim W$ is called \textit{Lagrangian} 
if the restriction of the holomorphic $2$-form on $X$ to $W$ is identically zero. 
Since Lagrangian fibrations are higher dimensional analogues of elliptic fibrations, 
one natural approach for studying $(2, 1)$-cycles on $X$ is to restrict them to 
the fibers of a Lagrangian fibration, or more generally, to Lagrangian subvarieties. 

The following observation implies that we can no longer use regulators to show that 
the restricted cycles are indecomposable. 

\begin{lemma}\label{lem: Lagrangian J2}
Let $W\subset X$ be a Lagrangian subvariety. 
The restriction map $J^2(X)_{{\rm tr}}\to J^2(W)_{{\rm tr}}$ is zero. 
\end{lemma}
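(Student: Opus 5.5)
We have to show that $J^2(X)_{\rm tr}\to J^2(W)_{\rm tr}$ is zero, where the map is induced by restriction $r\colon H^2(X,\Z)\to H^2(W,\Z)$. The plan is to show that $r$ sends all of $H^2(X,\Q)$ into the algebraic part $H^{1,1}(W,\Q)$. Then every class becomes algebraic after restriction, up to the torus $H^{1,1}(W,\Z)\otimes_{\Z}\C^{\times}$ that we quotient by.

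\textbf{Step 1.} Let $\sigma$ be the symplectic form and $T\subset H^2(X,\Q)$ the transcendental lattice, i.e.\ the orthogonal complement of ${\rm NS}(X)$ for the Beauville--Bogomolov form. Then $T\otimes\C$ is the smallest rational sub-Hodge structure containing $\sigma$. Since $W$ is Lagrangian, $r(\sigma)=\sigma|_W=0$ in $H^{2,0}(W)$. So $\ker(r_\Q)$ is a rational sub-Hodge structure of $H^2(X,\Q)$ containing $H^{2,0}(X)$. By minimality, $\ker(r_\Q)\supset T_\Q$.

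\textbf{Step 2.} We have $H^2(X,\Q)={\rm NS}(X)_\Q\oplus T_\Q$, and this decomposition is orthogonal because the BBF form is nondegenerate on ${\rm NS}(X)$, which has signature $(1,\ast)$. By Step 1 the image of $r_\Q$ is therefore $r({\rm NS}(X)_\Q)$. This image consists of classes of line bundles restricted to $W$, so it lies in ${\rm NS}(W)_\Q=H^{1,1}(W,\Z)\otimes\Q$.

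\textbf{Step 3: passing to Jacobians.} We have $J^2(X)=H^2(X,\C)/(H^2(X,\Z)+F^2)$, and $r$ maps $F^2H^2(X)$ into $F^2H^2(W)$. Write any class as $x=n+\tau$ with $n\in{\rm NS}(X)_\C$ and $\tau\in T_\C$. Then $r(\tau)=0$, since $r$ is $\C$-linear and kills $T_\Q$. Also $r(n)\in H^{1,1}(W,\Q)\otimes\C=H^{1,1}(W,\Z)\otimes\C$. Hence $r(J^2(X))$ lies in the image of $H^{1,1}(W,\Z)\otimes\C$ in $J^2(W)$, which is the torus $H^{1,1}(W,\Z)\otimes_\Z\C^\times$. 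This means the composition $J^2(X)\to J^2(W)\to J^2(W)_{\rm tr}$ vanishes.

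It follows that the induced map on $J^2(X)_{\rm tr}$ is zero. This map is well defined because ${\rm NS}(X)\otimes\C^\times$ lands in ${\rm NS}(W)\otimes\C^\times$.

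\textbf{Main obstacle.} The main subtlety is Step 1: we need that the rational Hodge structure $T_\Q$ has no proper sub-Hodge structure containing $\sigma$. This holds because $T_\Q$ is the minimal rational sub-Hodge structure whose complexification contains $H^{2,0}$. That, in turn, follows since $T_\Q$ is irreducible: any sub-Hodge structure not containing $\sigma$ is of type $(1,1)$, hence algebraic, hence contained in ${\rm NS}\cap T=0$.

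A second point to watch is the integrality and torsion bookkeeping in Step 3. It is harmless here, since we work with $\Q$-coefficients and $H^{1,1}(W,\Z)\otimes\C=H^{1,1}(W,\Q)\otimes\C$.
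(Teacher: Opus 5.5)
Your proof is correct and is essentially the paper's argument: both use the irreducibility of the transcendental part of $H^2(X,\Q)$, which is forced by $h^{2,0}(X)=1$, together with $\sigma|_W=0$. The differences are minor. You work with the sub-Hodge structure $T_{\Q}={\rm NS}(X)^{\perp}$ and so get the slightly stronger statement $r(T_{\Q})=0$, where the paper uses the quotient map $H^2(X,\Z)/{\rm NS}(X)\to H^2(W,\Z)/{\rm NS}(W)$. You also spell out the passage to Jacobians, which the paper leaves implicit.

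One step should be tightened. Showing that every nonzero sub-Hodge structure of $T_{\Q}$ contains $\sigma$ does not by itself give irreducibility or minimality. The quickest fix goes as follows. Let $K\subset H^2(X,\Q)$ be a sub-Hodge structure with $\sigma\in K_{\C}$. Then $\bar\sigma\in K_{\C}$ and $q(\sigma,\bar\sigma)\neq 0$, so $\sigma\notin K^{\perp}_{\C}$. Hence $K^{\perp}$ is of type $(1,1)$, so $K^{\perp}\subset{\rm NS}(X)_{\Q}$ and therefore $K\supset T_{\Q}$. The paper also leaves this detail implicit.
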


\begin{proof}
Recall that $H^2(X, \Z)/{\rm NS}(X)$ is a Hodge structure of weight two. 
Since $h^{2,0}(X)=1$, it contains no proper sub $\Z$-Hodge structure. 
The restriction map 
\begin{equation*}
H^2(X, \Z)/{\rm NS}(X) \to H^2(W, \Z)/{\rm NS}(W) 
\end{equation*}
is a morphism of Hodge structures. 
By the Lagrangian property, this annihilates $H^{2,0}(X)$. 
Hence it is the zero map.  
\end{proof}

If one believes that holomorphic $2$-forms play a dominant role in the study of $(2, 1)$-cycles, 
this observation even raises the possibility that the restricted cycles might be always decomposable 
(cf.~Remark \ref{rmk: Beilinson Hodge}). 
We are still not confident enough for formulating this as a conjecture. 
Instead, in the following, we observe that this property holds in the three examples in our hand.

\begin{example}
Let $\pi\colon S\to {\proj}^1$ be an elliptic $K3$ surface and 
$\xi$ be a $(2, 1)$-cycle on $S$. 
Then $\pi$ induces the Lagrangian fibration 
\begin{equation*}\label{eqn: Lag ell}
S^{[2]} \to {\proj}^2 ={\rm Sym}^2{\proj}^1, \quad p_1+p_2\mapsto \pi(p_1)+\pi(p_2). 
\end{equation*}
A general fiber of this fibration can be written as 
$F_1+F_2\simeq F_1\times F_2$ where $F_1\ne F_2$ are general $\pi$-fibers. 
We shall show that the restriction of $\xi^{[2]}$ to $F_1+F_2$ is decomposable. 
We write $\xi$ as 
\begin{equation*}
\xi = \sum_{i}(C_{i}, \phi_{i}) + \sum_{j}(E_{j}, \psi_{j}), 
\end{equation*}
where $C_{i}$ are horizontal curves and $E_{j}$ are vertical curves with respect to $\pi$. 
In other words, $E_{j}$ is contained in a $\pi$-fiber while $\pi(C_{i})={\proj}^1$. 
Since the divisor $E_{j}+S$ of $S^{[2]}$ is disjoint from the surface $F_1+F_2$, 
the vertical components of $\xi^{[2]}$ restrict to zero on $F_1+F_2$. 
On the other hands, as for the horizontal components, we have 
\begin{equation*}
(F_1+F_2) \cap (C_{i}+S) = 
\sum_{p\in C_i\cap F_1} (p+F_2) + \sum_{q\in C_i\cap F_2} (q+F_1).   
\end{equation*} 
If we restrict the induced rational map 
$C_i+S\dashrightarrow C_i \stackrel{\phi_i}{\to} {\proj}^1$ 
to $p+F_2\subset C_i+S$, 
this is constant with value $\phi_i(p)$. 
Similarly for $q+F_1$. 
Hence 
$\xi^{[2]}|_{F_1+F_2}$ 
is decomposable. 
\end{example}

\begin{example}
Let $\pi\colon S\to {\proj}^2$ be a $K3$ surface of degree $2$ and 
$\xi$ be a $(2, 1)$-cycle on $S$. 
Then $\pi$ induces the rational Lagrangian fibration 
\begin{equation*}
S^{[2]} \dashrightarrow ({\proj}^2)^{\vee}, \quad 
p_1+p_2 \mapsto \overline{\pi(p_1) \pi(p_2)},  
\end{equation*}
where $({\proj}^2)^{\vee}$ is the dual plane of ${\proj}^2$. 
The indeterminacy locus is ${\proj}^2\subset S^{[2]}$, the locus parametrizing $\pi$-fibers. 
If $l\subset {\proj}^2$ is a general line, 
the closure of the fiber over $[l]\in ({\proj}^2)^{\vee}$ is ${\rm Sym}^{2}C$ 
where $C=\pi^{-1}(l)$ is the genus $2$ curve over $l$. 
We shall show that the restriction of $\xi^{[2]}$ to this surface is decomposable. 
We write $\xi=\sum_{i}(C_i, \phi_i)$. 
Then 
\begin{equation*}
(C_i+S) \cap {\rm Sym}^2C = 
\sum_{p\in C_i\cap C} (p+C). 
\end{equation*}
If we restrict the induced rational map 
$C_i+S\dashrightarrow C_i \stackrel{\phi_i}{\to} {\proj}^1$ 
to $p+C\subset C_i+S$, 
it is constant with value $\phi_i(p)$. 
Hence $\xi^{[2]}|_{{\rm Sym}^2C}$ is decomposable. 
\end{example}

\begin{example}\label{ex: 3}
Let $Y=Y_{F,t}$ and $\xi_2$ be as in \S \ref{ssec: (2,1)}. 
We take a smooth hyperplane section $W=Y\cap H$ where $H\subset {\proj}^5$ is a hyperplane. 
Then the Fano surface $F(W)$ of the cubic threefold $W$ is a Lagrangian surface of $F(Y)$ 
(\cite{Vo}, see also \cite{Hu} \S 6.4.3). 
We shall show that $\xi_{2}|_{F(W)}$ is decomposable.  
We have 
\begin{equation}\label{eqn: xi2 on F(W)}
\xi_2|_{F(W)} = (Z_1, \psi_1) + (Z_2, \psi_2), 
\end{equation}
where the curve $Z_i\subset F(W)$ is the locus of lines on $W$ intersecting with $W\cap \hat{C}_{i}= H\cap \hat{C}_{i}$, 
and the rational map $\psi_{i}$ is given by 
\begin{equation}\label{eqn: Z_i}
Z_i \dashrightarrow H\cap \hat{C}_{i} \dashrightarrow C_i \stackrel{\phi_i}{\to} {\proj}^1. 
\end{equation}
Here the first map sends a line to its intersection point with $W\cap \hat{C}_{i}$, 
and the second map is the projection from the vertex $p_0$ of $\hat{C}_{i}$. 

We first consider the case the hyperplane $H$ passes through $p_0$. 
In this case $H\cap \hat{C}_{i}$ splits into three lines, say $l_{i1}+l_{i2}+l_{i3}$. 
Then $Z_i$ splits into three components, say $Z_{i1}+Z_{i2}+Z_{i3}$, 
where $Z_{ij}$ is the locus of lines intersecting with $l_{ij}$. 
By our description of $\psi_{i}$, it is constant on each component $Z_{ij}$. 
Therefore $\xi_2|_{F(W)}$ is decomposable. 
\end{example}

\begin{example}
Keeping the notation in Example \ref{ex: 3}, 
we next consider the case $H$ is general and in particular does not pass through $p_0$. 
Then $C_i':=\hat{C}_{i}\cap H$ is a cuspidal plane cubic isomorphic to $C_i$ 
by the projection $\hat{C}_{i}\dashrightarrow C_i$. 
Let $P_i$ be the plane containing $C_i'$; 
this is the intersection of $H$ with the linear hull of $\hat{C}_{i}$. 
We have $W\cap P_i=C_i'$. 
Since $P_1\cap P_2$ is a line (the line joining the three collinear points $C_1'\cap C_2'$), 
the span $\langle P_1, P_2 \rangle$ of $P_1$ and $P_2$ is $3$-dimensional. 
We consider the cubic surface $S=W\cap \langle P_1, P_2 \rangle$. 
We have the rational map 
\begin{equation*}
f : F(W) \dashrightarrow S, \quad [l]\mapsto l\cap \langle P_1, P_2 \rangle. 
\end{equation*}
The indeterminacy locus is $27$ points, the lines on $S$. 
Blowing-up them resolves $f$: 
\begin{equation*}
F(W) \stackrel{g}{\longleftarrow} \widetilde{F(W)} \stackrel{\tilde{f}}{\longrightarrow} S, 
\end{equation*}
where $g$ is the blow-down map and $\tilde{f}$ is a finite morphism (of degree $6$). 

We regard $\phi_i$ as a rational function on $C_i'$ via $C_i'\simeq C_i$ 
and denote it by $\phi_{i}'$. 
Then 
\begin{equation*}
\xi_{S} := (C_1', \phi_1') + (C_2', \phi_2') 
\end{equation*}
is a $(2, 1)$-cycle on $S$. 
By the descriptions \eqref{eqn: xi2 on F(W)} and \eqref{eqn: Z_i}, 
we see that $\xi_{2}|_{F(W)}$ is the pullback of $\xi_S$ by $f$. 
More precisely, we have 
$\xi_{2}|_{F(W)} = g_{\ast}\tilde{f}^{\ast}\xi_{S}$. 
Now, since $S$ is rational, we have ${\CH}^2(S, 1)_{{\rm ind}}=0$. 
Hence $\xi_S$ is indecomposable, and so is $\xi_{2}|_{F(W)}$. 
\end{example}

\begin{remark}\label{rmk: Beilinson Hodge}
A conjecture of de Jeu and Lewis \cite{dJL} predicts that 
the transcendental regulator map 
${\CH}^2(S, 1)_{{\rm ind}} \to J^{2}(S)_{{\rm tr}}$ 
is injective up to torsion for smooth projective surfaces $S$. 
If this is true, 
it follows immediately from Lemma \ref{lem: Lagrangian J2} that 
$\xi|_{S}$ is decomposable up to torsion for Lagrangian surfaces $S$ 
of holomorphic symplectic fourfolds $X$. 
\end{remark}


\end{document}